\numberwithin{equation}{section}
\newtheorem{thm}{Theorem}[section]
\newtheorem{lem}[thm]{Lemma}
\newtheorem{prop}[thm]{Proposition}
\newtheorem{cor}[thm]{Corollary}
\newtheorem{rem}[thm]{Remark}
\newtheorem{exam}[thm]{Example}
\newtheorem{exam-nota}[thm]{Example-Notation}
\newtheorem{rem-nota}[thm]{Remark-Notation}
\newtheorem{nota}[thm]{Notation}
\newtheorem{dfn}[thm]{Definition}
\newtheorem{dfn-nota}[thm]{Definition-Notation}
\newtheorem{dfn-lem}[thm]{Lemma-Definition}
\newtheorem{dfn-prop}[thm]{Proposition-Definition}
\newcommand{\beqa}{\begin{eqnarray*}}
\newcommand{\eeqa}{\end{eqnarray*}}
\newcommand{\fa}{\mbox{${\mathfrak a}$}}
\newcommand{\fk}{\mbox{${\mathfrak k}$}}
\newcommand{\fg}{\mbox{${\mathfrak g}$}}
\newcommand{\fh}{\mbox{${\mathfrak h}$}}
\newcommand{\fp}{\mbox{${\mathfrak p}$}}
\newcommand{\fr}{\mbox{${\mathfrak r}$}}
\newcommand{\fb}{\mbox{${\mathfrak b}$}}
\newcommand{\eps}{\epsilon}
\newcommand{\PR}{\mbox{${\mathbb P}$}}
\newcommand{\C}{\mbox{${\mathbb C}$}}
\newcommand{\Ad}{{\rm Ad}}
\newcommand{\fgl}{\mathfrak{gl}}
\newcommand{\B}{\mathcal{B}}
\newcommand{\ms}{m(s_{\alpha})}
\newcommand{\he}{\hat{e}}
\newcommand{\F}{\mathcal{F}}
\newcommand{\G}{\mathcal{G}}
\newcommand{\Borbitspace}{B_{n-1}\backslash\B_{n}}
\newcommand{\Sh}{\mathcal{S}h}
\newcommand{\calO}{\mathcal{O}}
\newcommand{\Sp}{\mathcal{S}p}
\title[$B_{n-1}$-orbits and the Bruhat graph]{$B_{n-1}$-orbits on the flag variety and the Bruhat graph of the symmetric group}
\author[M. Colarusso]{Mark Colarusso}
\address{Department of Math and Stats, University of South Alabama, Mobile, AL, 36608}
\email{mcolarusso@southalabama.edu}
\author[S. Evens]{Sam Evens}
\address{Department of Mathematics, University of Notre Dame, Notre Dame, IN, 46556}
\email{sevens@nd.edu}
\subjclass[2020]{14M15, 14L30, 20G20, 05E14}
\keywords{algebraic group actions, flag variety, Bruhat order}
\begin{document}
\maketitle
\begin{abstract}
Let $G=G_{n}=GL(n)$ be the $n\times n$ complex general linear group and embed $G_{n-1}=GL(n-1)$ in the top left hand corner of $G$.  The standard Borel subgroup of upper triangular matrices $B_{n-1}$ of $G_{n-1}$ acts on the flag variety $\B_{n}$ of $G$ with finitely many orbits.  In this paper, we show that each $B_{n-1}$-orbit is the intersection of orbits of two Borel subgroups of $G$ acting on $\B_{n}$.  This allows us to give a new combinatorial description of the $B_{n-1}$-orbits on $\B_{n}$ by associating to each orbit a pair of Weyl group elements.  The closure relations for the $B_{n-1}$-orbits can then be understood in terms of  the Bruhat order on the symmetric group, and the Richardson-Springer monoid action on the orbits can be understood in terms of a well-understood monoid action on the symmetric group.  This approach makes the closure relation more transparent than in \cite{Magyar} and the monoid action significantly more computable than in our papers \cite{CE21I} and \cite{CE21II}, and also allows us to obtain new information about the orbits including a simple formula for the dimension of an orbit.
\end{abstract}

\section{Introduction}

Let $G=G_{n}=GL(n)$ be the $n\times n$ complex general linear group, and let $B_{n-1}$ be the standard Borel subgroup of upper triangular matrices of 
$G_{n-1}$ embedded in the upper left corner of $G$.  In this paper, we give a new combinatorial description of the
$B_{n-1}$-orbits on the flag variety $\B_{n}$ of $G$ by associating to each orbit a pair of elements 
in the Weyl group $W$ of $G$, which  is identified with the symmetric group.   By the Bruhat graph for the symmetric group, we mean the graph whose nodes are given by elements of $W$, and where there is an edge between two nodes if one node corresponds to a Schubert cell that is codimension one in the closure of the Schubert cell corresponding to the other node.  As a consequence, the closure relations studied in \cite{Magyar} and the monoid action studied in our previous papers (\cite{CE21I},\cite{CE21II}) can be studied using the well-known structure of the Bruhat graph.   In particular, they can be deduced from the corresponding Bruhat order relation and monoid actions in the Weyl group.

In more detail, let $B=B_{n}\subset G$ be the standard Borel subgroup of upper triangular matrices stabilizing 
the standard flag $\mathcal{E}_{+}\in\B_{n}$.  Let $B^{*}$ be the Borel subgroup stabilizing the flag $\mathcal{E}^{*}=(V_{1}\subset V_{2}\subset\dots\subset V_{n-1}\subset V_{n})$, where $V_{i}=\mbox{span}\{e_{n},\, e_{1},\dots, e_{i-1}\}$ for $i=1,\dots, n$.  For $Q$ a $B_{n-1}$-orbit, the Bruhat decomposition implies $B\cdot Q=B \cdot w(\mathcal{E}_{+})$ and $B^{*}\cdot Q=B^{*}\cdot u^{*}(\mathcal{E}^{*})$ for unique elements $w, \, u^{*}\in W$.  The first result of this paper proves that the $B_{n-1}$-orbits on $\B_{n}$ are exactly the nonempty intersections of $B$ and $B^{*}$-orbits.  
More precisely,
\begin{thm}\label{thm:introinter}(Theorem \ref{thm:orbitsintersect}, Corollary \ref{c:intersect}, and Corollary \ref{c:Shpair})
The map $\Sh:\Borbitspace \to W\times W$ given by 
\begin{equation}\label{eq:introShmap}
\Sh(Q):=(w,u^{*})\mbox{ where } B\cdot Q=B \cdot w(\mathcal{E}_{+})\mbox{ and } B^{*}\cdot Q =  B^{*}\cdot u^{*}(\mathcal{E}^{*})
\end{equation} 
is injective. Further, if $\Sh(Q)=(w,u^{*})$, then 
$Q=B\cdot w(\mathcal{E}_{+})\cap B^{*}\cdot u^{*}(\mathcal{E}^{*})$, and the image of $\Sh$ consists of all pairs $(w,u^{*})$ such that 
$B\cdot w(\mathcal{E}_{+})\cap B^{*}\cdot u^{*}(\mathcal{E}^{*})$ is nonempty.
\end{thm}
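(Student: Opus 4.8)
The plan is to exploit the special geometry of $B$ and $B^{*}$: both are standard Borel subgroups relative to the same maximal torus $H$ (diagonal matrices), and the key observation is that $B_{n-1} = B \cap B^{*}$, where $B_{n-1}$ is viewed inside $G$ via the embedding. Indeed $B^{*}$ is $\dot{s}\, B\, \dot{s}^{-1}$ for the cycle $s = (1\,2\,\cdots\,n)$ (sending $e_n$ to the first slot), and one checks directly that a matrix in $B$ lies in $B^{*}$ exactly when its last row is $(0,\dots,0,*)$, i.e. when it stabilizes the line $\C e_n$ and its restriction to the quotient is upper triangular — which is precisely the image of $B_{n-1}$. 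So the first step is to verify $B_{n-1} = B \cap B^{*}$ by this explicit matrix computation.

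Granting that, the heart of the argument is a general fact about subgroups of the form $H_1 \cap H_2$ acting on a homogeneous space, specialized here: if $B \cap B^{*}$ acts on $\B_n$, then each $(B\cap B^{*})$-orbit is a full connected component of $B\cdot x \cap B^{*}\cdot y$ for appropriate $x,y$. First I would show the easy inclusion $Q \subseteq B\cdot w(\mathcal{E}_+) \cap B^{*}\cdot u^{*}(\mathcal{E}^{*})$, which is immediate from the definitions of $w$ and $u^{*}$ together with $B\cap B^{*} \subseteq B, B^{*}$. For the reverse inclusion and injectivity of $\Sh$ simultaneously, I would use the orbit-dimension/transversality package: the $B$-orbits and $B^{*}$-orbits are the Bruhat cells for two opposite-type stratifications, and one wants that $B\cdot w(\mathcal{E}_+)$ and $B^{*}\cdot u^{*}(\mathcal{E}^{*})$ meet \emph{transversally} (or at least that every nonempty intersection is a single $(B\cap B^{*})$-orbit). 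The cleanest route is a dimension count: $\dim(B\cdot x) + \dim(B^{*}\cdot y) - \dim \B_n$ should equal $\dim(B\cap B^{*}) - \dim(\mathrm{Stab})$ on the relevant piece, forcing the intersection, when nonempty, to be homogeneous under $B\cap B^{*}$; then $Q$, being one such orbit contained in the intersection, must equal it. Injectivity of $\Sh$ then follows because $Q$ is recovered as $B\cdot w(\mathcal{E}_+)\cap B^{*}\cdot u^{*}(\mathcal{E}^{*})$ from the pair $(w,u^{*})$ alone.

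Alternatively — and this may be the more robust path — I would argue via Richardson's theorem on intersections of Bruhat cells / the theory of $B\times B^{*}$-orbits on $\B_n \times \B_n$ pulled back along the diagonal, or directly invoke that $B\cdot wB \cap B^{*} \cdot u^{*}B$ is a single $(B\cap B^{*})$-orbit because the two opposite Bruhat stratifications are in "good position" — concretely, because $B^{*} = \dot{s}B\dot{s}^{-1}$ and one can translate the problem into understanding $B$-orbits versus $\dot{s}B\dot{s}^{-1}$-orbits, i.e. a double-coset count $B \backslash G / (s^{-1}Bs \cap B)$ type statement. The image of $\Sh$ is then automatically characterized: $(w,u^{*})$ is in the image iff the corresponding intersection is nonempty, since for any nonempty intersection the argument above produces a $(B\cap B^{*}) = B_{n-1}$-orbit filling it, which is some $Q$ with $\Sh(Q) = (w,u^{*})$.

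The main obstacle I anticipate is establishing that every nonempty intersection $B\cdot w(\mathcal{E}_+)\cap B^{*}\cdot u^{*}(\mathcal{E}^{*})$ is a \emph{single} $B_{n-1}$-orbit rather than a union of several — equivalently, an irreducibility/connectedness statement for these intersections. A general intersection of Bruhat cells from two arbitrary Borels need not be a single orbit for the intersected group, so one genuinely needs the special relation $B_{n-1} = B \cap B^{*}$ and presumably a careful analysis of the parabolic $P = B \cdot \C e_n$ (the stabilizer of the line) through which both $B$ and $B^{*}$ factor. I expect the proof to reduce the problem to this parabolic and then to an inductive or direct transversality argument there; making that reduction precise, and handling the bookkeeping of which pairs $(w,u^{*})$ actually arise, is where the real work lies.
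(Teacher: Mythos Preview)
Your identification of $B\cap B^{*}=ZB_{n-1}$ (the center $Z$ is present but acts trivially on $\mathcal{B}_n$) and of the main obstacle---showing that a nonempty $B\cdot w(\mathcal{E}_{+})\cap B^{*}\cdot u^{*}(\mathcal{E}^{*})$ is a single $B_{n-1}$-orbit---are both correct. But none of your proposed routes to that obstacle actually reaches it. The Borels $B$ and $B^{*}$ are \emph{not} opposite: $B^{*}=\sigma B\sigma^{-1}$ for an element $\sigma$ of length $n-1$, and $B\cap B^{*}$ has dimension far larger than $\dim H$, so no off-the-shelf transversality theorem for Bruhat cells applies, and the dimension identity you sketch presupposes exactly the transversality you need. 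Richardson's theory concerns $B\times B^{*}$-orbits on $G$, equivalently $B$-orbits on $G/B^{*}$; that is not the same problem as intersecting $B$- and $B^{*}$-orbits inside $G/B$. You correctly flag all of this as ``where the real work lies,'' but nothing in the proposal supplies it.

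The paper's argument takes an entirely different, combinatorial route. It imports from the authors' earlier work an explicit set of representatives for $B_{n-1}$-orbits called \emph{flags in standard form}: flags whose successive vectors are either standard basis vectors $e_{j}$ or ``hat'' vectors $\hat{e}_{j}=e_{j}+e_{n}$, subject to simple ordering rules. For such a flag $\mathcal{F}$ one writes down by hand the unique $H$-fixed flag $\tilde{\mathcal{F}}$ in $B\cdot\mathcal{F}$ and the unique $H$-fixed flag $\mathcal{F}^{*}$ in $B^{*}\cdot\mathcal{F}$, and then observes directly that $\mathcal{F}$ can be reconstructed from the pair $(\tilde{\mathcal{F}},\mathcal{F}^{*})$: the positions where $\tilde{\mathcal{F}}$ and $\mathcal{F}^{*}$ disagree are precisely the positions of hat vectors in $\mathcal{F}$, and their indices are read off from $\mathcal{F}^{*}$. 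This makes $\mathcal{F}\mapsto(\tilde{\mathcal{F}},\mathcal{F}^{*})$ injective on standard forms, which is equivalent to injectivity of $\Sh$. The single-orbit statement for the intersection then follows formally: any $B_{n-1}$-orbit contained in $B\cdot\mathcal{F}\cap B^{*}\cdot\mathcal{F}$ has the same associated $\tilde{\mathcal{F}}$ and $\mathcal{F}^{*}$, hence the same standard form, hence equals $B_{n-1}\cdot\mathcal{F}$. No transversality or dimension count is used anywhere.
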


This result was suggested to us by John Shareshian.  We call a pair of Weyl group elements $(w,u^{*})\in W\times W$ with the property that $B\cdot w(\mathcal{E}_{+})\cap B^{*}\cdot u^{*}(\mathcal{E}^{*})\neq \emptyset$ a \emph{Shareshian pair}.  We denote the set of all Shareshian pairs by $\mathcal{S}p\subset W\times W$. Theorem \ref{thm:introinter} implies that the map $\Sh: \Borbitspace\rightarrow \Sp$ in (\ref{eq:introShmap}) is bijective.  We refer to the map $\Sh$ as the \emph{Shareshian map}.   In Proposition \ref{p:Shpairs}, we characterize $\Sp$, and verify that the classification of orbits by Shareshian pairs is equivalent to a classification described by Magyar \cite{Magyar}.  It is also equivalent to an earlier description of orbits given by Hashimoto \cite{Hashi}.


The set of all Shareshian pairs comes equipped with a natural partial order $\leq$ which is the restriction of the product of Bruhat orders on $W\times W$, where in the second factor the Bruhat order is the one defined by the simple reflections 
\begin{equation}\label{eq:introgenerators}
S^{*}=\{s_{1}^{*}, \dots, s_{n-1}^{*}\}\mbox{ with } s_{1}^{*}=(1,n), \mbox{ and } s_{i}^{*}=(i-1,i)\mbox{ for }i=2,\dots, n-1.  
\end{equation}
The second basic result of this paper asserts that 
we can describe the closure relations on $\Borbitspace$ using the Bruhat order on Shareshian pairs.   
\begin{thm}\label{thm:introclosure} (Theorem \ref{t:sharequalbruhat})
Let $Q,\, Q^{\prime}\in\Borbitspace$ and let $\Sh(Q)$ and $\Sh(Q^{\prime})$ denote the corresponding Shareshian Pairs.  Then 
$$
Q^{\prime}\subset\overline{Q}\Leftrightarrow \Sh(Q^{\prime})\leq \Sh(Q). 
$$
\end{thm}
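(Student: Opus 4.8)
\emph{The forward implication.} This half is elementary and does not use injectivity of $\Sh$. Write $\mathcal{O}_{w}=B\cdot w(\mathcal{E}_{+})$ and $\mathcal{O}^{*}_{u^{*}}=B^{*}\cdot u^{*}(\mathcal{E}^{*})$. If $Q^{\prime}\subset\overline{Q}$, then since $Q\subset B\cdot Q=\mathcal{O}_{w}$ we get $\overline{Q^{\prime}}\subset\overline{Q}\subset\overline{\mathcal{O}_{w}}$; as $\overline{\mathcal{O}_{w}}=\bigcup_{v\le w}\mathcal{O}_{v}$ is the Schubert variety and $Q^{\prime}\subset\mathcal{O}_{w^{\prime}}$, this forces $w^{\prime}\le w$. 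The same argument with $B^{*}$ and the Schubert variety $\overline{\mathcal{O}^{*}_{u^{*}}}=\bigcup_{x\le u^{*}}\mathcal{O}^{*}_{x}$, for the Bruhat order attached to $S^{*}$, gives $u^{\prime *}\le u^{*}$, hence $\Sh(Q^{\prime})\le\Sh(Q)$.

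\emph{Reformulation of the converse.} By Theorem \ref{thm:orbitsintersect} and Corollary \ref{c:intersect} we have $Q=\mathcal{O}_{w}\cap\mathcal{O}^{*}_{u^{*}}$ and $Q^{\prime}=\mathcal{O}_{w^{\prime}}\cap\mathcal{O}^{*}_{u^{\prime *}}$. Intersecting the two Bruhat decompositions and applying Theorem \ref{thm:orbitsintersect} once more,
\[
\overline{Q}\ \subset\ \overline{\mathcal{O}_{w}}\cap\overline{\mathcal{O}^{*}_{u^{*}}}\ =\ \bigcup_{(v,x)\in\Sp,\ (v,x)\le(w,u^{*})}\mathcal{O}_{v}\cap\mathcal{O}^{*}_{x},
\]
so the theorem amounts to showing that $\overline{Q}$ is \emph{all} of $\overline{\mathcal{O}_{w}}\cap\overline{\mathcal{O}^{*}_{u^{*}}}$, equivalently that this intersection is irreducible with $Q$ dense in it. Since $(\Sp,\le)$ is finite, the order is the transitive closure of its covering relations, and the relation ``$Q^{\prime}\subset\overline{Q}$'' on orbits is a partial order; so it suffices to prove $Q^{\prime}\subset\overline{Q}$ whenever $\Sh(Q^{\prime})\lessdot\Sh(Q)$ is a cover in $(\Sp,\le)$.

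\emph{Realizing a cover.} The plan is to use the minimal parabolic fibrations. For a simple reflection $s$ of $G$, the map $\pi_{s}\colon\B_{n}\to G/P_{s}$ has $\mathbb{P}^{1}$ fibres, so for any $\Bk$-orbit $R$ the set $\pi_{s}^{-1}(\pi_{s}(R))$ is irreducible; hence its dense $\Bk$-orbit $R^{+}$ satisfies $R^{\prime}\subset\overline{R^{+}}$ for \emph{every} $\Bk$-orbit $R^{\prime}\subset\pi_{s}^{-1}(\pi_{s}(R))$. Thus it is enough, for the cover $\Sh(Q^{\prime})\lessdot\Sh(Q)$, to produce a simple reflection $s$ of $G$ (or, using the analogous fibration defined by $B^{*}$ and a generator in $S^{*}$) with $\pi_{s}(Q^{\prime})\subset\pi_{s}(Q)$ and $Q$ dense in $\pi_{s}^{-1}(\pi_{s}(Q))$. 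I expect the covers of $\Sp$ to fall into two families — those changing only $w$ by a Bruhat cover in $W$, and those changing only $u^{*}$ by a cover of the $S^{*}$-Bruhat order — the separation being forced by the dimension formula for $B_{n-1}$-orbits (a cover cannot lower both $\ell(w)$ and $\ell_{S^{*}}(u^{*})$, so $\dim Q$ is a rank function and $\Sp$ is graded). For a cover of the first kind one takes $s\in S$ with $ws<w$ and checks, from the incidence/rank description of $Q,Q^{\prime}$ furnished by the intersection theorem, that $Q^{\prime}$ lies in the boundary stratum of $\pi_{s}^{-1}(\pi_{s}(Q))$ while $Q$ is dense there; the second kind is symmetric.

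\emph{Main obstacle.} The crux is this last step. One first needs the combinatorial analysis of the covers of $\Sp$, for which the explicit dimension formula is indispensable. More seriously, a Bruhat cover $w^{\prime}\lessdot w$ need not be effected by multiplying by a simple reflection, so matching it to a single fibration move is delicate: one must either invoke the lifting property of the Bruhat order to reduce — through an intermediate $\Bk$-orbit — to the case of a simple reflection and then induct on $\dim Q$, or replace the single-fibre argument by a Bott--Samelson-type description of $\overline{Q}$ as the closure of an iterated minimal-parabolic product acting on a $\Bk$-orbit. A less self-contained route is to combine the forward implication with Magyar's determination of the closure order in \cite{Magyar} and the dictionary of Proposition \ref{p:Shpairs} translating Magyar's combinatorial order into the Bruhat order on Shareshian pairs, which yields the reverse inclusion immediately.
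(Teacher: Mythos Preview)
Your forward implication is correct and is essentially the easy half; the paper does not separate it out, but the same containment $Q\subset\mathcal{O}_{w}$, $Q\subset\mathcal{O}^{*}_{u^{*}}$ is implicit.

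For the converse, however, your main line of argument has a genuine gap, and you yourself flag it. The cover-by-cover strategy requires (i) that covers in $(\Sp,\le)$ change only one coordinate, and (ii) that each such cover can be realised inside a single $\mathbb{P}^{1}$-fibre. For (i) you appeal to the dimension formula, but that formula is Theorem~\ref{t:dimformula}, proved later in the paper using the monoid machinery of \cite{CE21II}; you cannot invoke it here without importing a comparable amount of outside input, and even with it the claim that a cover cannot drop both lengths is not obvious, since $|\Delta(w,u^{*})|$ can vary. For (ii), as you note, a Bruhat cover $w^{\prime}\lessdot w$ need not be a simple-reflection move, so the reduction to a single minimal parabolic is not available without a further induction whose inductive step you have not supplied.

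The paper's actual argument avoids all of this. It does use Magyar's theorem, as your last paragraph suggests, but the translation is \emph{not} via Proposition~\ref{p:Shpairs}. The key observation (Proposition~\ref{prop:rstarrbar}) is the linear-algebra identity
\[
\overline{r}_{p,q}(Q)\;=\;r_{p,q}(Q)+\dim\bigl(L_{n}\cap(V_{p}+E_{q})\bigr)\;=\;\dim(V_{p}\cap E_{q+1}^{*})\;=\;r^{*}_{p,q+1}(Q),
\]
which follows in two lines from $E_{q+1}^{*}=E_{q}\oplus L_{n}$ and the second isomorphism theorem. Thus Magyar's two families of inequalities $r_{p,q}(Q^{\prime})\ge r_{p,q}(Q)$ and $\overline{r}_{p,q}(Q^{\prime})\ge\overline{r}_{p,q}(Q)$ are \emph{literally} the incidence characterisations of $B\cdot Q^{\prime}\subset\overline{B\cdot Q}$ and $B^{*}\cdot Q^{\prime}\subset\overline{B^{*}\cdot Q}$ from Equations~\eqref{eq:standardbruhatorder}--\eqref{eq:starstandard}. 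No cover analysis, no dimension formula, and no dictionary between Magyar's decorated permutations and Shareshian pairs is needed: the two order relations are term-by-term identical once you recognise $\overline{r}_{p,q}$ as a $B^{*}$-incidence number.
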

This description of the closure relations seems to us considerably more transparent than the description of the closure relations on $\Borbitspace$ previously given by Magyar in his paper \cite{Magyar}.  See Example \ref{ex:b2flag3} to see the utility of this perspective.   However, our proof relies heavily on the main theorem  from \cite{Magyar}.

 In \cite{CE21I} and \cite{CE21II}, we studied an \emph{extended monoid} action on $\Borbitspace$ by simple roots of $\fk:=\fg_{n-1}$ and $\fg$, which extended a previous action by roots of $\fg$ discussed in Hashimoto's work \cite{Hashi}. 
For a simple root $\alpha$ of $\fk$ or $\fg$ and $B_{n-1}$-orbit $Q$, we let 
$\ms*Q$ denote the monoid action of $\alpha$ on $Q$, and we call the action by roots of $\fk$ the left action and the action by roots of $\fg$ the right action.
  The Weyl group $W$ has well-studied left and right monoid actions by simple roots of $\fg$, whose properties are closely tied to the Bruhat order on $W$.  We use this monoid action on $W$ to define left and right monoid actions by simple roots of $\fk$ and $\fg$ on $W\times W$.
\begin{thm}\label{thm:intromonoid}(Theorem \ref{thm:intertwine})
 The Shareshian map in Equation (\ref{eq:introShmap}) intertwines the extended monoid action on $B_{n-1}$-orbits on $\B_{n}$ with the classical monoid action on $W\times W$ described above, i.e., for a simple root $\alpha$ of $\fk$ or $\fg$ and a $B_{n-1}$-orbit $Q$, 
 \begin{equation}\label{eq:intromonoids}
 \Sh(\ms*Q)=\ms*\Sh(Q).
 \end{equation}
 \end{thm}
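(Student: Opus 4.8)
The plan is to prove the intertwining relation $\Sh(\ms*Q) = \ms * \Sh(Q)$ separately for the right action (by simple roots of $\fg$) and the left action (by simple roots of $\fk = \fg_{n-1}$), using that the extended monoid action is built out of classical Richardson--Springer monoid actions associated to the relevant group actions on $\B_n$.

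\textbf{Setup and reduction.} Recall that for a symmetric-subgroup-style action one has a monoid action $m(s_\alpha)*Q$ defined orbit-by-orbit via the projection $\pi_\alpha\colon \B_n \to \mathcal{P}_\alpha$ to the partial flag variety for the minimal parabolic $P_\alpha$: $m(s_\alpha)*Q$ is the dense orbit in $\pi_\alpha^{-1}(\pi_\alpha(Q))$. The key structural input I would use is that $B\cdot Q$ and $B^*\cdot Q$ are themselves single $B$- (resp. $B^*$-) orbits, and that $\pi_\alpha$ is $G$-equivariant, hence carries $B$-orbits to $B$-orbits and intertwines the $B_{n-1}$-action. First I would handle the right action: for $\alpha \in \Pi_{\fg}$, I want to show that if $B\cdot Q = B\cdot w(\mathcal{E}_+)$ then $B\cdot(m(s_\alpha)*Q) = B\cdot(m(s_\alpha)*w)(\mathcal{E}_+)$, where $m(s_\alpha)*w$ is the classical Weyl-group monoid product ($ws_\alpha$ if $\ell(ws_\alpha) > \ell(w)$, else $w$). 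Since $\pi_\alpha^{-1}(\pi_\alpha(Q))$ is $B_{n-1}$-stable, applying $B\cdot(-)$ commutes with taking $\pi_\alpha$-saturation, and the dense $B_{n-1}$-orbit maps into the dense $B$-orbit of the saturated set; the $B$-orbit statement then reduces to the well-known fact about the $B$-action on $\B_n$ itself. The second coordinate behaves the same way once one checks that for $\alpha \in \Pi_{\fg}$, the action $m(s_\alpha)$ in the $B^*$-picture corresponds to the classical monoid action on $W$ with respect to $S^*$: here I would use that $B^*$ is $G$-conjugate to $B$ via a fixed element $g_0 \in G$, transporting the root $\alpha$ to a simple root for the $S^*$-presentation, so the same reduction applies.

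\textbf{The left action.} For $\alpha \in \Pi_{\fk}$, the monoid action $m(s_\alpha)*Q$ is the dense $B_{n-1}$-orbit in $p_\alpha^{-1}(p_\alpha(Q))$, where now $p_\alpha\colon \B_n \to \mathcal{P}_\alpha$ is the projection for the minimal parabolic $P_\alpha \subset G_{n-1}$ (a parabolic of the \emph{subgroup} $G_{n-1}$). The subtlety is that $\pi_\alpha$ here is $G_{n-1}$-equivariant but not $G$-equivariant, so I cannot directly push forward $B$- or $B^*$-orbits of $G$. Instead I would invoke Theorem~\ref{thm:introinter}: $Q = B\cdot w(\mathcal{E}_+) \cap B^*\cdot u^*(\mathcal{E}^*)$, and I need to identify how saturating by $p_\alpha$ changes each factor. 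The clean way is to observe that $P_\alpha \subset G_{n-1} \subset G$ sits inside a minimal parabolic $\widehat{P}_\alpha$ of $G$ for the same simple root $\alpha$ (viewing $\alpha$ as a simple root of $\fg$ among $s_1,\dots,s_{n-2}$), and $p_\alpha$ factors through $\pi_\alpha^{\fg}$. Then $p_\alpha^{-1}(p_\alpha(Q)) \subseteq (\pi_\alpha^{\fg})^{-1}(\pi_\alpha^{\fg}(Q))$, and I would argue that $B\cdot(m(s_\alpha)*Q) = B\cdot(m(s_\alpha)*w)(\mathcal{E}_+)$ using the $G$-fiber and that the $B_{n-1}$-action and $B$-action have compatible saturations along this common fiber. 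The second coordinate $B^*\cdot(m(s_\alpha)*Q)$ is the one requiring real care: a simple root $\alpha$ of $\fk$ corresponds, in the $S^*$-labeling, to a simple reflection, and I would compute explicitly which one using the description $s_1^* = (1,n)$, $s_i^* = (i-1,i)$ and the embedding $G_{n-1} \hookrightarrow G$. The claim becomes that the left monoid action by $\alpha \in \Pi_{\fk}$ on the second coordinate is the classical right (or left) $S^*$-monoid action by the corresponding $s_i^*$ — essentially because the $B^*$-flag $\mathcal{E}^*$ was designed so that $G_{n-1}$-orbits interact with it through the cyclically-shifted Weyl-group structure.

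\textbf{Main obstacle and how to finish.} I expect the genuine difficulty to lie in the left action on the $B^*$-coordinate: unwinding the definition of $\mathcal{E}^*$ (with $V_i = \mathrm{span}\{e_n, e_1, \dots, e_{i-1}\}$) and checking that a minimal parabolic of $G_{n-1}$ saturates a $B^*$-orbit in exactly the way predicted by the classical $S^*$-monoid action on $W$. I would settle this by a direct combinatorial computation on flags, reducing to the rank-one ($SL_2$ or $GL_2$) situation inside the relevant minimal parabolic and tracking how the $B^*$-relative position changes — this is the one place where the special choice of $\mathcal{E}^*$ and the generators $S^*$ in \eqref{eq:introgenerators} really gets used. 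Once both the left and right actions are verified on each of the two coordinates of $\Sh$, the theorem follows coordinatewise, since by Theorem~\ref{thm:introinter} an orbit is determined by its pair $(w, u^*)$ and the monoid action $m(s_\alpha)*Q$ is likewise characterized by a density/saturation condition that we have matched up in both coordinates; I would also double-check the degenerate case $m(s_\alpha)*Q = Q$ (when $\alpha$ already acts trivially) corresponds exactly to the fixed points of the classical monoid action, i.e. $\ell$ does not increase in the relevant coordinate, which is automatic from the reductions above.
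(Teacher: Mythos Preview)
Your approach matches the paper's: show coordinatewise that $B\cdot(m(s_\alpha)*Q)=m(s_\alpha)*Q_B$ and $B^*\cdot(m(s_\alpha)*Q)=m(s_{\alpha'})*Q_{B^*}$ (with $\alpha'=\alpha^*=\sigma(\alpha)$ for the right action and $\alpha'=\alpha$ for the left), then invoke the bijection of Theorem~\ref{thm:introinter}. Two remarks on execution.

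First, the assertion ``the dense $B_{n-1}$-orbit maps into the dense $B$-orbit of the saturated set'' does not follow merely from the commutation $B\cdot\pi_\alpha^{-1}(\pi_\alpha(Q))=\pi_\alpha^{-1}(\pi_\alpha(Q_B))$. You still must check that the open $B$-orbit $m(s_\alpha)*Q_B$ actually meets the smaller set $B_{n-1}xP_\alpha/B$ (right action) or $P_\alpha^K x B/B$ (left action); only then does irreducibility force $m(s_\alpha)*Q\subset m(s_\alpha)*Q_B$. The paper does this by exhibiting an explicit point: $xs_\alpha B$ or $xB$ in the right case, $\dot{s}_\alpha xB$ or $xB$ in the left case (using that $\dot{s}_\alpha\in P_\alpha^K$ since $\alpha\in\Pi_{\fk}$), depending on whether $\alpha$ is complex stable or unstable for $Q_B$.

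Second, you are overcomplicating the left action on the $B^*$-coordinate. The crucial observation you miss is that $\Pi_{\fk}\subset\Pi_{\fg}^*$: every simple root of $\fk$ is already simple for $B^*$ (indeed $\Pi_{\fk}=\{\alpha_2^*,\dots,\alpha_{n-1}^*\}$), precisely because $B\cap B^*=ZB_{n-1}$ forces $\fb_{n-1}\subset\fb\cap\fb^*$. Hence the argument for the $B^*$-coordinate is word-for-word the same as for the $B$-coordinate, replacing $B$ by $B^*$ and the $G$-parabolic $P_\alpha\supset B$ by the $G$-parabolic containing $B^*$ attached to $\alpha\in\Pi_{\fg}^*$; no flag-by-flag combinatorial computation is needed. (A minor correction: there is no projection $p_\alpha\colon\B_n\to\mathcal{P}_\alpha$ for a $G_{n-1}$-parabolic, since $\B_n$ is not $G_{n-1}$-homogeneous; the left action is defined directly as the open $B_{n-1}$-orbit in $P_\alpha^K\cdot Q$, as in Equation~(\ref{eq:leftmonoid}).)
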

\noindent Moreover, each simple root of $\fk$ or $\fg$ has a type, which is essential for studying the geometry of the orbit closures, and we prove in the above Theorem that the type of a root $\alpha$ for $Q$ is determined by the type of the root $\alpha$ for $\Sh(Q)=(w,u^{*})$, which is computed easily using the Bruhat order on $W$.   As a consequence, computation of the extended monoid action becomes far simpler than in our earlier papers \cite{CE21I} and \cite{CE21II}.  
 See Example \ref{ex:b2flag3} and the associated figure and remark.   

The description of the extended monoid action on $\Borbitspace$ given in Theorem \ref{thm:intromonoid} allows us to obtain a simple formula for the dimension of an orbit $Q\in\Borbitspace$. 
\begin{thm}\label{thm:introdim}(Theorem \ref{t:dimformula})
Let $Q\in\Borbitspace$ with $\Sh(Q)=(w, u^{*})$ and let $\sigma\in W$ be the $n$-cycle 
$\sigma=(n,n-1, \dots, 1)$.  Then 
$$
\dim Q=\frac{\ell(w)+\ell(u^{*})+|u^{*}\sigma w^{-1}|-n}{2},
$$
where $|u^{*}\sigma w^{-1}|$ denotes the order of the element $u^{*}\sigma w^{-1}$ in the group $W$, and $\ell(u^{*})$ is the length of $u^{*}$ with respect to the set of simple reflections $S^{*}$ given in (\ref{eq:introgenerators}).  
\end{thm}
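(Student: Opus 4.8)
The plan is to prove the dimension formula by induction on $\dim Q$, using the extended monoid action and the intertwining property (Theorem~\ref{thm:intromonoid}) to reduce to a base case. First I would establish the base case: identify the closed $B_{n-1}$-orbit(s), i.e.\ the minimal elements of $\Borbitspace$ under the closure order. By Theorem~\ref{thm:introclosure} these correspond to the minimal Shareshian pairs under the Bruhat order on $\Sp$. The natural candidate is the orbit $Q_0$ with $\Sh(Q_0)=(e, e^{*})$ (if it lies in $\Sp$), which should be a point, so $\dim Q_0 = 0$; one must then check $\frac{\ell(e)+\ell(e^*)+|e^*\sigma e^{-1}|-n}{2}=\frac{0+0+|\sigma|-n}{2}=\frac{n-n}{2}=0$, using that $\sigma=(n,n-1,\dots,1)$ is an $n$-cycle and hence has order $n$. (If there is more than one closed orbit, or if $(e,e^*)\notin\Sp$, each minimal pair must be checked directly; this bookkeeping is a place where care is needed.)

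For the inductive step I would use that every non-closed orbit $Q$ is obtained from a lower-dimensional orbit $Q'$ by a monoid action: there is a simple root $\alpha$ of $\fk$ or $\fg$ of \emph{complex type} (type~I in the Richardson--Springer terminology) with $\ms * Q' = Q$ and $\dim Q = \dim Q' + 1$. By Theorem~\ref{thm:intromonoid}, $\Sh(Q) = \ms * \Sh(Q')$, and the type of $\alpha$ for $Q$ equals the type of $\alpha$ for $\Sh(Q')=(w',u'^{*})$, computed via the Bruhat order on $W$. So it suffices to show the right-hand side of the formula increases by exactly $1$ when we apply a complex-type monoid move to $(w',u'^{*})$. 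I would split into the two cases: (a) $\alpha$ is a root of $\fg$ acting on the right, so $w = w's_\alpha$ with $\ell(w)=\ell(w')+1$ and $u^{*}=u'^{*}$; (b) $\alpha$ is a root of $\fk$ acting on the left, so $u^{*} = s_\alpha^{*} u'^{*}$ (or the appropriate one-sided action built from the generators $S^{*}$ in~\eqref{eq:introgenerators}) with $\ell(u^{*})=\ell(u'^{*})+1$ and $w=w'$. In case (a) the change in the numerator is $1 + 0 + \big(|u'^{*}\sigma (w's_\alpha)^{-1}| - |u'^{*}\sigma w'^{-1}|\big)$, and in case (b) it is $0 + 1 + \big(|(s_\alpha^{*}u'^{*})\sigma w'^{-1}| - |u'^{*}\sigma w'^{-1}|\big)$; in both cases I need the order of the ``comparison element'' $u^{*}\sigma w^{-1}$ to change by $0$ exactly when the move is of complex type and by $\mp 1$ accordingly in the other type cases — but since only complex moves change the dimension, the key fact to isolate is: \emph{a complex-type monoid move leaves $|u^{*}\sigma w^{-1}|$ unchanged}, so the numerator increases by $1$, giving $\dim Q = \dim Q' + 1$, as desired.

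The element $u^{*}\sigma w^{-1}$ deserves a conceptual interpretation, and supplying one is where the real content lies. I expect that $u^{*}\sigma w^{-1}$ (or a conjugate of it) records the relative position of the two flags $w(\mathcal{E}_+)$ and $u^{*}(\mathcal{E}^*)$, and more precisely measures the ``defect'' by which the generic flag in $Q = B\cdot w(\mathcal{E}_+)\cap B^{*}\cdot u^{*}(\mathcal{E}^*)$ fails to be in general position; the $n$-cycle $\sigma$ enters because $B$ and $B^{*}$ are in relative position $\sigma$ (the flags $\mathcal{E}_+$ and $\mathcal{E}^*$ differ by the cyclic shift sending $e_i\mapsto e_{i-1}$, $e_1\mapsto e_n$). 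Concretely, one knows $\dim (B\cdot w(\mathcal{E}_+)\cap B^{*}\cdot u^{*}(\mathcal{E}^*))$ for two Schubert cells in relative-position-$\sigma$ Borels can be computed from Deodhar-type / Richardson-variety dimension counts, and the order $|u^{*}\sigma w^{-1}|$ is the combinatorial gadget that makes this count uniform. I would either derive the formula from such an intersection-dimension computation directly (bypassing induction), or — more in the spirit of this paper — verify it on the minimal orbits and then show both sides transform identically under all three types of monoid moves (complex, non-compact, compact), invoking the standard behavior of $\ell$ under the $W$-monoid action and a lemma that pins down how $|u^{*}\sigma w^{-1}|$ changes in each case.

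The main obstacle will be precisely that last lemma: controlling $|u^{*}\sigma w^{-1}|$ — the order of a group element, a quantity that is not additive and behaves erratically under general multiplication — under the monoid moves. The saving grace should be that $\sigma$ is a single $n$-cycle and the moves are by \emph{simple} reflections (with the twisted generating set $S^{*}$ on the second factor), so $u^{*}\sigma w^{-1}$ changes by left- or right-multiplication by a transposition $(i,i+1)$ (or $(1,n)$), and multiplying an $n$-cycle's conjugate by an adjacent transposition either merges two cycles, splits one cycle, or does neither in a controlled way — the permutation-group combinatorics of how cycle type (hence order, here essentially cycle length) changes under multiplication by a transposition is classical. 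Translating the Richardson--Springer ``type of $\alpha$ for $(w,u^{*})$'' (which Theorem~\ref{thm:intromonoid} tells us is Bruhat-order-computable) into the corresponding statement about the cycle structure of $u^{*}\sigma w^{-1}$ is the crux, and I would expect to spend most of the proof there.
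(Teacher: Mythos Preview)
Your inductive strategy via the monoid action is the right one and matches the paper, but your execution rests on a misunderstanding of the \emph{diagonal} monoid action (Definition~\ref{d:diagonal}), and this causes the arithmetic to go wrong throughout. The restricted diagonal action acts on \emph{both} coordinates of $(w,u^{*})$ at once: for $\alpha\in\Pi_{\fg}$ one has $m(s_{\alpha})*(w,u^{*})=(m(s_{\alpha})*_{R}w,\,m(s_{\alpha^{*}})*_{R}u^{*})$, and for $\alpha\in\Pi_{\fk}$ one has $m(s_{\alpha})*(w,u^{*})=(m(s_{\alpha})*_{L}w,\,m(s_{\alpha})*_{L}u^{*})$. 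So your cases (a) and (b), in which only one coordinate moves, simply do not occur for a complex-stable root. When $\alpha$ is complex stable for $Q'$, Theorem~\ref{thm:intertwine}(1) says $\alpha$ is complex stable for \emph{both} $w'$ and $u'^{*}$, hence $\ell(w)+\ell(u^{*})=\ell(w')+\ell(u'^{*})+2$; one then checks that $u^{*}\sigma w^{-1}$ equals (right case) or is conjugate to (left case) $u'^{*}\sigma (w')^{-1}$, so its order is unchanged, and the numerator goes up by $2$, not $1$. Your claim that ``only complex moves change the dimension'' is also false: non-compact imaginary moves raise the dimension by $1$ as well (Proposition~\ref{p:stableandnc}(2)), and the paper's induction needs both types (this is exactly what Theorem~6.5 of \cite{CE21II} supplies). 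In the non-compact case, Theorem~\ref{thm:intertwine}(2) says $\alpha$ is stable for exactly one of $w',u'^{*}$, so $\ell(w)+\ell(u^{*})$ increases by $1$, and the crucial Proposition~\ref{p:shnc} (supported by Lemma~\ref{l:bothindelta}) shows $|u^{*}\sigma w^{-1}|$ increases by $1$, again giving numerator $+2$.

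Two further points. First, your base case is wrong: $(e,e^{*})$ is \emph{not} a Shareshian pair (check condition~(2) of Proposition~\ref{p:Shpairs}). There are $n$ zero-dimensional orbits, with $\Sh(Q_{i})=(s_{n-1}\cdots s_{i},\,s_{1}^{*}\cdots s_{i-1}^{*})$, for which $\ell(w)+\ell(u^{*})=n-1$ and $u^{*}\sigma w^{-1}=\mathrm{id}$ (order $1$), giving $D(Q_{i})=0$. Second, the reason the order $|u^{*}\sigma w^{-1}|$ is tractable at all is that, by Proposition~\ref{p:Shpairs}, $u^{*}\sigma w^{-1}=\tau_{\Delta}$ is always a \emph{single cycle} of length $|\Delta|$, so its order is just $|\Delta|$; this is the structural fact that replaces your hoped-for ``permutation-group combinatorics of cycle type under multiplication by a transposition,'' and it is what makes Proposition~\ref{p:shnc} a clean statement rather than a case analysis.
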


We note that we proved in \cite{CE21II} that the closure ordering on $\Borbitspace$ is the so-called standard order of Richardson and Springer discussed in \cite{RS} and this result is used in order to prove the above Theorem.  This paper is part of a larger project.  Indeed, the orbits in $\Borbitspace$ are characterized as the $B$-orbits on $\B_n \times \mathbb{P}^{n-1}$ satisfying a genericity condition for the second factor.  In later work, we plan to give a similar description for all $B$-orbits on $\B_n \times \mathbb{P}^{n-1}$, and to establish further results on the geometry of orbit closures.   In particular, we would like to establish links between our results and the very interesting paper of Travkin \cite{Tr}.  It would also be interesting to understand the connection between this project and the work of Gandini and Pezzini \cite{GP}.

This paper is structured as follows.  In Section \ref{s:orbitintersect}, we prove that each $B_{n-1}$-orbit on $\B_n$ is the intersection of a $B$-orbit and a 
$B^{*}$-orbit, and show that our classification is equivalent to the classification using decorated permutations in \cite{Magyar}.  In Section \ref{s:closure}, we prove that if $Q$ and $Q^{\prime}$ are two orbits in $\Borbitspace$ and $\Sh(Q)=(w,u^{*})$ and $\Sh(Q^{\prime})=(y,v^{*})$, then $Q^{\prime}\subset \overline{Q}$ if and only if $y\le w$ and $v^{*}\le u^{*}$ in the Bruhat order on $W$.  In 
Section \ref{s:monoid}, we recall the definition and basic properties of the monoid action and type of a simple root for an orbit (as studied by many people), define our extended monoid action, and prove the compatibility of the extended monoid action with the classical monoid action on $W\times W$, and explain how to compute the type of an orbit in $\Borbitspace$ from the type of the corresponding $B$ and $B^{*}$-orbits.  We then use this result to compute our formula for the dimension of an orbit $Q$ in $\Borbitspace$.  Finally, we introduce the notion of a standardized Shareshian pair, which makes the order relation slightly more transparent.

We would like to thank John Shareshian who conjectured the result of Theorem \ref{thm:introinter} which inspired this  project.   We would also like to thank the referees for their many useful suggestions, which significantly improve the exposition of this paper.



\section{Parameterizations of $B_{n-1}$-orbits on $\B_{n}$}\label{s:orbitintersect}
In this section, we define the Shareshian map from $B_{n-1}$-orbits on $\B_{n}$ to pairs of Weyl group elements, prove it is injective, and characterize its image, the Shareshian pairs.  We give explicit identifications between three different parameterization of $B_{n-1}$-orbits on $\B_{n}$; flags in standard form, Shareshian pairs, and Magyar's notion of decorated permutations.


\subsection{Conventions and notation }\label{s:notation}
In this paper, all algebraic varieties are by convention complex algebraic varieties, and similarly with Lie algebras.  Let $G=GL(n)$ be the complex general linear group and $\fg=\fgl(n)$ be its Lie algebra.  Embed $G_{n-1}:=GL(n-1)$ in $G$ as matrices fixing the standard basis vector $e_{n}$. We let $\fh\subset \fg$ denote the standard Cartan subalgebra of diagonal matrices and let $H\subset G$ be the corresponding algebraic group.  We let $\eps_{j}\in\fh^{*}$ be the linear functional on $\fh$ which acts on $x= \mbox{diag}[h_{1},\dots, h_{j},\dots, h_{n}]\in\fh$ by $\eps_{j}(x)=h_{j}$.  Let $W=N_{G}(H)/H$ be the Weyl group of $G$ with respect to $H$, which is isomorphic to the symmetric group $\mathcal{S}_n$ on $n$ letters.   If $w \in W$ and $Y$ is a $H$-stable subvariety of a $G$-variety $X$, then $\dot{w}\cdot Y$ is independent of the choice of a representative $\dot{w}$ in $N_G(H)$, and we write $w\cdot Y$ in place of $\dot{w}\cdot Y$.  For an algebraic group $A$ with Lie algebra $\fa$, we denote the adjoint action of $A$ on $\fa$ by $\Ad(g)x$ for $g\in A$ and $x\in\fa$.  Abusing notation, we also denote the action of $A$ on itself by conjugation by $\Ad$, so that $\Ad(g)h:=ghg^{-1}$ for $g,\, h\in A$. 


For a nonzero vector $v\in\C^{n}$, we denote the line through $v$ containing the origin by $[v]\in\mathbb{P}^{n-1}$.  Throughout the paper, we use the identifications of the flag variety $\B_{n}$ with Borel subalgebras of $\fg$ and with the variety
 $\mbox{Flag}(\C^{n})$ of full
flags in $\C^{n}$.  If $\F\in \mbox{Flag}(\C^{n})$ is fixed by $H$ and $w\in W$, we denote the action $w\cdot \F$ of $w$ on $\F$ by $w(\F)$.  We make heavy use of the following notation for flags throughout the paper.  
\begin{nota}\label{nota:standard} Let 
   $$
  \mathcal{F}=(V_{1}\subset V_{2}\subset\dots\subset V_{i}\subset V_{i+1}\subset \dots)
   $$
be a flag in $\C^{n}$, with $\dim V_{i}=i$ and $V_{i}=\mbox{span}\{v_{1},\dots, v_{i}\}$, with each $v_{j}\in\C^{n}$.  We will denote this flag $\mathcal{F}$ 
by
   $$
  \mathcal{F}=  (v_{1}\subset  v_{2}\subset\dots\subset v_{i}\subset v_{i+1}\subset\dots ). 
   $$
\end{nota}


\subsection{Flags in Standard Form} 

In Section 4.1 of \cite{CE21II}, we find a canonical set of representatives for 
elements of $\Borbitspace$ which we call \emph{flags in standard form}.  

\begin{dfn}\label{d:std}
\begin{enumerate}
\item For a standard basis vector $e_{i}\in\C^{n}$ with $i\leq n-1$, we define $\he_{i}:=e_{i}+e_{n}$ and refer to $\he_{i}$ as a hat vector of index $i$.
\item We say that a flag 
\begin{equation}\label{eq:basicflag}
\mathcal{F}:=(v_{1}\subset \dots \subset v_{i}\subset\dots\subset v_{n})
\end{equation}
in the flag variety $\B_n$ for $G$
is in \emph{standard form} if $v_{i}=e_{j_{i}}$ or $v_{i}=\he_{j_{i}}$ for all $i=1,\dots, n$, and 
$\mathcal{F}$ satisfies the following three conditions:
\begin{enumerate}
\item $v_i = e_n$ for some $i.$
\item If $v_{i}=e_{n}$, then $v_{k}=e_{j_{k}}$ for all $k>i$.
 \item If $i<k$ with $v_{i}=\he_{j_{i}}$ and $v_{k}=\he_{j_{k}}$, then $j_{i}>j_{k}$. 
\end{enumerate}
\end{enumerate}
\end{dfn}
\noindent One of the main results of \cite{CE21II} is: 
\begin{thm}\label{thm:standardforms}[see Theorem 4.7 of \cite{CE21II}]
 The map  
\begin{equation*}
\Psi: \{ \text{ Flags in standard form } \} \to B_{n-1}\backslash\B_n, \ \ \mathcal{F}\mapsto B_{n-1}\cdot \F.
\end{equation*}
is bijective.
\end{thm}

\subsection{Shareshian Pairs}\label{ss:Shpairs}
Recall the flags $\mathcal{E}_{+}$ and $\mathcal{E}^{*}$ from the introduction.  
In the language of Notation \ref{nota:standard}, we  write 
$\mathcal{E}_{+}:=(e_{1}\subset e_{2}\subset \dots\subset e_{n})$ and $\mathcal{E}^{*}=(e_{n}\subset e_{1}\subset\dots \subset e_{n-1}).$  Then the standard Borel subgroup $B$ of invertible upper triangular matrices stabilizes the 
flag $\mathcal{E}_{+}$, and we let $B^{*}$ be the stabilizer of $\mathcal{E}^{*}$ in $G$.  
It is easy to see that $B\cap B^{*}=Z B_{n-1}$, where $Z$ is the centre of $G$.

\begin{thm}\label{thm:orbitsintersect}
Let $\F$ be a flag in standard form, and let $B_{n-1}\cdot\F$ be the $B_{n-1}$-orbit through $\F$. Then $B_{n-1}\cdot \F=B\cdot \F\cap B^{*}\cdot \F $.  
\end{thm}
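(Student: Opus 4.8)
The inclusion $B_{n-1}\cdot\F\subseteq B\cdot\F\cap B^{*}\cdot\F$ is immediate: since $B\cap B^{*}=ZB_{n-1}$ and the centre $Z$ acts trivially on $\B_{n}$, the group $B_{n-1}$ is contained in both $B$ and $B^{*}$. All the content is in the reverse inclusion, and the plan is to reduce it to a statement about one small solvable subgroup. Let $L=\{g\in G:\ ge_{i}=e_{i}\text{ for }1\le i\le n-1\}$; any such $g$ is automatically upper triangular, so $L\subseteq B$, and $L$ is exactly the set of maps with $e_{n}\mapsto c+te_{n}$ for $c\in\mathrm{span}\{e_{1},\dots,e_{n-1}\}$ and $t\in\C^{*}$. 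From the $2\times2$ block form (blocks of sizes $n-1$ and $1$) of an upper-triangular matrix one has $B=B_{n-1}L$, so every $b\in B$ factors as $b=b_{n-1}\ell$ with $b_{n-1}\in B_{n-1}$ and $\ell\in L$. Given $\mathcal{G}\in B\cdot\F\cap B^{*}\cdot\F$, write $\mathcal{G}=b_{n-1}\ell\cdot\F$; since $b_{n-1}^{-1}\in B_{n-1}\subseteq B^{*}$, the flag $\ell\cdot\F=b_{n-1}^{-1}\cdot\mathcal{G}$ again lies in $B^{*}\cdot\F$. Hence it suffices to prove: if $\ell\in L$ and $\ell\cdot\F\in B^{*}\cdot\F$, then $\ell\cdot\F\in B_{n-1}\cdot\F$ (equivalently $L\cdot\F\cap B^{*}\cdot\F\subseteq B_{n-1}\cdot\F$); for then $\mathcal{G}=b_{n-1}\cdot(\ell\cdot\F)\in B_{n-1}\cdot\F$.

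The next step is to convert membership in $B^{*}\cdot\F$ into linear-algebra data. Write $\F=(v_{1}\subset\dots\subset v_{n})$ in standard form (Definition \ref{d:std}) and put $F_{i}=\mathrm{span}\{v_{1},\dots,v_{i}\}$; let $V^{*}_{1}=\langle e_{n}\rangle$ and $V^{*}_{j}=\mathrm{span}\{e_{n},e_{1},\dots,e_{j-1}\}$ for $2\le j\le n$ be the subspaces of $\mathcal{E}^{*}$. By the Bruhat decomposition, $\ell\cdot\F$ and $\F$ lie in the same $B^{*}$-orbit if and only if $\dim(F_{i}\cap \ell^{-1}V^{*}_{j})=\dim(F_{i}\cap V^{*}_{j})$ for all $i,j$. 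Since $\ell$ fixes $e_{1},\dots,e_{n-1}$ and sends $e_{n}\mapsto c+te_{n}$, one has $\ell^{-1}e_{n}=t^{-1}(e_{n}-c)$, hence $\ell^{-1}V^{*}_{1}=\langle e_{n}-c\rangle$ and $\ell^{-1}V^{*}_{j}=\mathrm{span}\{e_{1},\dots,e_{j-1}\}\oplus\langle e_{n}-c\rangle$ for $j\ge 2$. Thus the condition $\ell\cdot\F\in B^{*}\cdot\F$ becomes an explicit system of rank equalities in $c$ and $t$ depending only on the sequence $(v_{1},\dots,v_{n})$.

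It remains to unwind these equalities using the standard form. By Definition \ref{d:std} there is a unique $i_{0}$ with $v_{i_{0}}=e_{n}$; every hat entry $v_{k}=\he_{j_{k}}$ occurs at a position $k<i_{0}$ with the indices $j_{k}$ strictly decreasing in $k$, and all entries beyond position $i_{0}$ are plain basis vectors. Substituting this into the rank equalities and comparing $\dim(F_{i}\cap V^{*}_{j})$ with $\dim(F_{i}\cap\ell^{-1}V^{*}_{j})$ pins $c$ down to a set of explicit conditions — the driving mechanism being that if $F_{k}$ sits inside some $V^{*}_{j}$ but in no smaller one, then $\ell F_{k}$ must satisfy the same containment, which constrains the relevant components of $c$. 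From these conditions on $c$ one then builds an explicit $b_{n-1}\in B_{n-1}$ — a diagonal part together with strictly upper-triangular corrections read off from the indices $j_{k}$ — with $b_{n-1}^{-1}\ell\in\Stab_{G}(\F)$, so that $\ell\cdot\F=b_{n-1}\cdot\F\in B_{n-1}\cdot\F$, which closes the reduction of the first paragraph. I expect this last part — extracting the precise conditions on $c$ from the combinatorics of the standard form and exhibiting the compensating element of $B_{n-1}$ — to be the main obstacle, and it is exactly here that conditions (a)--(c) of Definition \ref{d:std} are used in an essential way. (Alternatively, as noted in the Introduction, Theorem \ref{thm:orbitsintersect} is equivalent to the injectivity of the Shareshian map, and could instead be proved combinatorially by writing down the rule $\F\mapsto(w,u^{*})$ directly from the standard form and checking that distinct standard forms give distinct pairs.)
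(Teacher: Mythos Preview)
Your main argument is incomplete: the step you yourself flag as ``the main obstacle'' is only described at the level of strategy.  The reduction is sound---the factorization $B=B_{n-1}L$ is correct, and the passage from $\mathcal{G}\in B\cdot\F\cap B^{*}\cdot\F$ to the problem $L\cdot\F\cap B^{*}\cdot\F\subseteq B_{n-1}\cdot\F$ is clean---and the rank criterion for $B^{*}$-orbit membership is valid.  But you never actually extract the constraints on $c$ from the system of equalities $\dim(F_{i}\cap\ell^{-1}V^{*}_{j})=\dim(F_{i}\cap V^{*}_{j})$, nor exhibit the compensating element $b_{n-1}\in B_{n-1}$.  (For instance, already in the simplest hat case one needs both a vanishing condition on certain coordinates of $c$ and a nonvanishing condition of the form $1+c_{j}\neq 0$; the latter comes from a rank equality you did not write down, and without it the element $b_{n-1}$ does not exist.)  This direct route does go through, but the omitted computation is the entire content of the theorem, and as written the proposal is a plan rather than a proof.

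The paper takes exactly the alternative you mention at the end, and it is considerably shorter.  Since the intersection $B\cdot\F\cap B^{*}\cdot\F$ is $B_{n-1}$-stable, it is a union of $B_{n-1}$-orbits, each with its own standard-form representative $\G$ by Theorem~\ref{thm:standardforms}; the theorem then reduces to showing that the map $\Phi(\F)=(\tilde{\F},\F^{*})$ is injective on standard forms, where $\tilde{\F}$ and $\F^{*}$ are the unique $H$-fixed flags in $B\cdot\F$ and $B^{*}\cdot\F$.  The paper computes these explicitly (Proposition~\ref{p:orbits}): $\F^{*}$ is obtained from $\F$ by replacing each $\hat{e}_{j}$ by $e_{j}$, while $\tilde{\F}$ arises from a cyclic shift of the hat indices together with $e_{n}$.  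From these formulas one reads off $\F$ from the pair $(\tilde{\F},\F^{*})$ immediately.  This approach avoids the group $L$ and the rank equalities entirely, and the explicit formulas for $\tilde{\F}$ and $\F^{*}$ are reused later (e.g.\ in Proposition~\ref{p:Shpairs}) to characterise Shareshian pairs---so the paper's route also yields dividends your direct computation would not.
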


\noindent Consider a $B$-orbit $Q_B$ and a $B^*$-orbit $Q_{B^*}$ in $\B_n$.
If $Q_B \cap Q_{B^*}$ is nonempty, then it contains a $B_{n-1}$-orbit $Q=B_{n-1}\cdot\F$, so by the Theorem, 
\begin{equation}\label{eq:intersection}
Q=B\cdot \F \cap B^*\cdot \F = Q_B \cap Q_{B^*}.
\end{equation} 
Thus, Theorems \ref{thm:standardforms} and \ref{thm:orbitsintersect} imply:  

\begin{cor}\label{c:intersect}
The $B_{n-1}$-orbits on $\B_{n}$ are precisely the nonempty intersections of $B$-orbits and $B^{*}$-orbits.  
\end{cor}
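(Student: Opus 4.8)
The plan is to prove Theorem~\ref{thm:orbitsintersect} directly, and then deduce Corollary~\ref{c:intersect} as an immediate formal consequence. For the theorem, the inclusion $B_{n-1}\cdot\F \subseteq B\cdot\F \cap B^{*}\cdot\F$ is trivial, since $B_{n-1} = B_{n-1} \subseteq B$ and $B_{n-1} \subseteq B^{*}$ (indeed $B\cap B^{*}=ZB_{n-1}$ and $Z$ acts trivially on $\B_n$). So the whole content is the reverse inclusion: if a flag $\mathcal{F}'$ lies in $B\cdot\F$ and also in $B^{*}\cdot\F$, then $\mathcal{F}'\in B_{n-1}\cdot\F$. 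Equivalently, writing $\mathcal{F}'=b\cdot\F=b^{*}\cdot\F$ with $b\in B$, $b^{*}\in B^{*}$, we have $(b^{*})^{-1}b \in \Stab_G(\F)$, so $b \in b^{*}\,\Stab_G(\F)$; the goal is to show that the $B$-coset of $b$ actually meets $B_{n-1}\,\Stab_G(\F)$, i.e.\ that $B\cdot\F \cap B^{*}\cdot\F$ is a single $B_{n-1}$-orbit rather than a union of several.

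First I would reduce to a stabilizer/dimension count. Since $B\cdot\F$ and $B^{*}\cdot\F$ are each locally closed and irreducible, and $B_{n-1}\cdot\F$ is a $B_{n-1}$-orbit contained in the intersection, it suffices to show that the intersection is irreducible of dimension equal to $\dim(B_{n-1}\cdot\F)$, and that it is a single orbit — or more robustly, to show the intersection is homogeneous under $B\cap B^{*}=ZB_{n-1}$. The cleanest route: show that the natural map $B_{n-1}\backslash(B/\Stab_B(\F)) \to$ (something) forces the intersection to coincide with $B_{n-1}\cdot\F$. Concretely, I would pick the standard-form representative $\F$ and compute $\Stab_B(\F)=B\cap \Stab_G(\F)$ and $\Stab_{B^{*}}(\F)=B^{*}\cap\Stab_G(\F)$ explicitly in coordinates, using the very concrete description of standard-form flags as being built from the vectors $e_i$ and $\he_i = e_i+e_n$. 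The key structural fact I expect to need is that $\Stab_G(\F) = \Stab_B(\F)\cdot\Stab_{B_{n-1}}(\F)^{-}\cdot(\text{something})$ — more precisely, a factorization statement saying $B\cdot\Stab_G(\F) \cap B^{*} = (B\cap B^{*})\cdot(\Stab_G(\F)\cap B^{*})$ inside $G$, which is exactly the sort of identity that unwinds to the theorem.

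The approach I actually favor is to work on the double-coset space directly: consider the $(B, \Stab_G(\F))$-double coset $B\,b\,\Stab_G(\F) = B\,\Stab_G(\F)$ (a single double coset since $b\cdot\F=\F'$ lies in $B\cdot\F$, and we can translate so that $\F'=\F$, i.e.\ replace $\F$ by $\mathcal{F}'$ which is again a flag in the same $B$-orbit, hence we may assume $\mathcal{F}' \in B^{*}\cdot\F \cap B\cdot\F$ with $B\cdot\mathcal{F}'=B\cdot\F$). We must show $\mathcal{F}'\in B_{n-1}\cdot\F$. I would analyze the position of $\mathcal{F}'$ relative to the reference flag $\mathcal{E}^{*}$: since $\mathcal{F}'\in B^{*}\cdot\F$, its relative position $w(\mathcal{F}',\mathcal{E}^{*})$ equals $w(\F,\mathcal{E}^{*})$; and since $\mathcal{F}'\in B\cdot\F$, its relative position to $\mathcal{E}_{+}$ equals that of $\F$. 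Knowing both relative positions (to $\mathcal{E}_{+}$ and to $\mathcal{E}^{*}$, which together span as $B\cap B^{*}=ZB_{n-1}$ up to center), the double coset $B\cap B^{*}$-homogeneity should follow from a transversality/Richardson-variety argument: the intersection $B\cdot\F\cap B^{*}\cdot\F$ is an open Richardson-type variety for the pair $(B,B^{*})$, and because $B\cap B^{*}=ZB_{n-1}$ acts on it, showing it is exactly one $B_{n-1}$-orbit amounts to showing this intersection is a single $(B\cap B^{*})$-orbit. I would establish the latter by the explicit coordinate computation with the standard-form vectors, exploiting the strong rigidity imposed by Definition~\ref{d:std}(2c) (the hat-vector indices strictly decrease), which pins down $\mathcal{F}'$ uniquely up to $B_{n-1}$ acting. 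The main obstacle, I expect, is precisely this last point: showing that the combined constraints ``$\mathcal{F}'$ has the same $B$-position as $\F$'' and ``$\mathcal{F}'$ has the same $B^{*}$-position as $\F$'' leave no freedom beyond $B_{n-1}$ — this requires the delicate bookkeeping of how the $e_i$'s and $\he_i$'s interact, and is where the standard-form normalization from \cite{CE21II} must be used in full force. Once Theorem~\ref{thm:orbitsintersect} is in hand, Corollary~\ref{c:intersect} follows exactly as indicated in the excerpt: by Theorem~\ref{thm:standardforms} every $B_{n-1}$-orbit is $B_{n-1}\cdot\F$ for a standard-form $\F$, hence equals $B\cdot\F\cap B^{*}\cdot\F$, an intersection of a $B$-orbit and a $B^{*}$-orbit; conversely any nonempty such intersection $Q_B\cap Q_{B^{*}}$ contains some $B_{n-1}$-orbit $B_{n-1}\cdot\F$ (as $B\cap B^{*}\supseteq B_{n-1}$ acts on it and orbits of the finite-orbit group $B_{n-1}$ exist inside it), and then $Q_B\cap Q_{B^{*}} = B\cdot\F\cap B^{*}\cdot\F = B_{n-1}\cdot\F$ by the theorem, so every nonempty intersection is itself a single $B_{n-1}$-orbit.
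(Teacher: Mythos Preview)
Your deduction of Corollary~\ref{c:intersect} from Theorem~\ref{thm:orbitsintersect} at the end is correct and matches the paper exactly. The issue is the proof of Theorem~\ref{thm:orbitsintersect} itself, where your proposal is a sketch of several possible approaches rather than an argument, and where the paper's actual method is both different and substantially simpler than any of the routes you outline.

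The paper does not compute stabilizers, does not do a dimension count, and does not invoke any Richardson-variety transversality. Instead it exploits the \emph{finiteness} of $B_{n-1}$-orbits together with Theorem~\ref{thm:standardforms}. The reduction is: $B\cdot\F\cap B^{*}\cdot\F$ is a finite union of $B_{n-1}$-orbits, each of which has a unique standard-form representative $\G$; for any such $\G$ one has $B\cdot\G=B\cdot\F$ and $B^{*}\cdot\G=B^{*}\cdot\F$, hence $\tilde{\G}=\tilde{\F}$ and $\G^{*}=\F^{*}$ (uniqueness of the $H$-fixed flag in a Bruhat cell). So the whole theorem reduces to the purely combinatorial claim that the map $\Phi(\F)=(\tilde{\F},\F^{*})$ is injective on standard-form flags. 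That injectivity is then read off directly from the explicit formulas for $\tilde{\F}$ and $\F^{*}$ in Proposition~\ref{p:orbits}: one recovers $\F$ from $(\tilde{\F},\F^{*})$ by putting a hat on $v_i$ exactly at the positions where $\tilde{v}_i\neq v_i^{*}$ (reading off the hat index from $\F^{*}$). No coordinate computation of stabilizers or orbit geometry is needed.

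Your Richardson-variety analogy is also a bit misleading: for a pair of opposite Borels the intersection of two Bruhat cells is typically \emph{not} a single orbit of $B\cap B^{*}$ (which is then only a torus). The reason the present situation is special is precisely that $B$ and $B^{*}$ are far from opposite---they share all of $B_{n-1}$---and it is this, captured combinatorially by the injectivity of $\Phi$, that makes the intersection a single $B_{n-1}$-orbit. Your ``explicit coordinate computation'' could in principle be made to work, but the paper's reduction to injectivity of $\Phi$ on the discrete set of standard forms is the missing idea that makes the argument short.
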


To prove Theorem \ref{thm:orbitsintersect}, we first need to analyze the orbits $B\cdot \F$ and
$B^{*}\cdot \F$.  Since the Borel subgroups $B$ and $B^{*}$ contain the standard Cartan subgroup $H$ of diagonal matrices in $G$, it follows from the Bruhat decomposition that $B\cdot\F$ and $B^{*}\cdot\F$ each contain a unique flag which is $H$-stable.  We compute these flags in the next proposition.  
\begin{nota}\label{n:tildeandstarflags}
Let $\F\in\B_{n}$ be a flag in standard form.  We denote by $\tilde{\F}$ the unique $H$-stable flag in the $B$-orbit $B\cdot \F$ and
by $\F^{*}$ the unique $H$-stable flag in the $B^{*}$-orbit $B^{*}\cdot \F$.
\end{nota}
%
 
\begin{prop}\label{p:orbits}
Let $\F\subset \B_{n}$ be a flag in standard form with 
$$
\F=(v_{1}\subset v_{2}\subset \dots\subset v_{p}\subset \dots\subset v_{n}).
$$  
(1) If $\F$ contains no hat vectors, then 
$\F=\tilde{\F}=\F^{*}$. 

\noindent (2) If $\F$ has hat vectors, we may assume that $\F$ has the form:
\begin{equation}\label{eq:hatvectorF}
\F=(v_{1}\subset\dots\subset v_{i_{k}-1}\subset\underbrace{ \he_{j_{k}}}_{i_{k}}\subset v_{i_{k}+1}\subset \dots \subset \underbrace{\he_{j_{k-1}}}_{i_{k-1}}\subset \dots \subset \underbrace{\he_{j_{1}}}_{i_{1}}\subset \dots\subset \underbrace{e_{n}}_{p}\subset v_{p+1}\subset \dots\subset v_{n}),
\end{equation}
with $j_{k}>j_{k-1}>\dots>j_{1}$ and $v_{m}$ a standard basis vector.  
Then
\begin{equation}\label{eq:tildeF}
\tilde{\F}=(v_{1}\subset\dots\subset v_{i_{k}-1}\subset\underbrace{e_{n}}_{i_{k}}\subset v_{i_{k}+1}\subset \dots\subset \underbrace{e_{j_{k}}}_{i_{k-1}}\subset\dots\subset \underbrace{e_{j_{2}}}_{i_{1}}\subset \dots\subset\underbrace{e_{j_{1}}}_{p}\subset v_{p+1}\subset\dots\subset v_{n}),
\end{equation}
and 
\begin{equation}\label{eq:starF}
\F^{*}=(v_{1}\subset \dots\subset v_{i_{k-1}}\subset \underbrace{e_{j_{k}}}_{i_{k}}\subset v_{i_{k}+1}\subset \dots\subset \underbrace{e_{j_{k-1}}}_{i_{k-1}}\subset \dots\subset \underbrace{e_{j_{1}}}_{i_{1}}\subset \dots\subset\underbrace{e_{n}}_{p}\subset v_{p+1}\subset \dots\subset v_{n}),
\end{equation}
where the $v_{m}$ are the same vectors that appear in the flag in Equation (\ref{eq:hatvectorF}).

\end{prop}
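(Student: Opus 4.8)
The plan is to exhibit, for a standard-form flag $\F$ with hat vectors, explicit group elements $b\in B$ and $b^{*}\in B^{*}$ carrying $\F$ to the flags on the right-hand sides of (\ref{eq:tildeF}) and (\ref{eq:starF}); since those are coordinate flags, hence $H$-stable, the uniqueness of the $H$-stable flag in a $B$-orbit (resp.\ $B^{*}$-orbit) recorded just before Notation \ref{n:tildeandstarflags} will identify them with $\tilde{\F}$ and $\F^{*}$. Part (1) is the degenerate case: a flag with no hat vectors is itself a coordinate flag, hence $H$-stable, and $H\subset B\cap B^{*}$, so $\F=\tilde{\F}=\F^{*}$. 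For part (2) I would first record a structural description of standard-form flags. Let $H=\{j_{1},\dots,j_{k}\}$ be the set of hat indices, which are distinct by condition (c) of Definition \ref{d:std}, and put $S=\{1,\dots,n-1\}\setminus H$. Since $\F$ is a full flag, $v_{1},\dots,v_{n}$ form a basis of $\C^{n}$, so no vector repeats; and if both $e_{a}$ and $\he_{a}=e_{a}+e_{n}$ occurred in $\F$, then together with $e_{n}$ (present by condition (a)) they would be linearly dependent. Hence no hat index is also the index of a standard basis vector appearing in $\F$, and counting the $n$ vectors of $\F$ forces them to be exactly $\{e_{a}:a\in S\}\cup\{\he_{j_{m}}:1\le m\le k\}\cup\{e_{n}\}$. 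In particular $e_{n}$ occurs exactly once, necessarily after all hat vectors by condition (b), so the positions of the hat vectors and of $e_{n}$ are $i_{k}<\dots<i_{1}<p$ as in (\ref{eq:hatvectorF}) and every other position carries some $e_{a}$ with $a\in S$; this is what makes the right-hand sides of (\ref{eq:tildeF}) and (\ref{eq:starF}) coordinate flags.

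For the $B^{*}$-orbit, take $b^{*}:=I-\sum_{m=1}^{k}E_{n,j_{m}}$, where $E_{n,j_{m}}$ is the matrix unit with a single $1$ in position $(n,j_{m})$. Then $b^{*}$ is unipotent, hence invertible, and it lies in $B^{*}$: each subspace $\mbox{span}\{e_{n},e_{1},\dots,e_{\ell-1}\}$ of $\mathcal{E}^{*}$ is spanned by $e_{n}$ together with certain $e_{a}$ having $a<n$, and $b^{*}$ fixes $e_{n}$ and sends each such $e_{a}$ to $e_{a}$ or $e_{a}-e_{n}$. Applying $b^{*}$ to $\F$ fixes $e_{n}$ and every $e_{a}$ with $a\in S$, and sends $\he_{j_{m}}=e_{j_{m}}+e_{n}\mapsto e_{j_{m}}$; so $b^{*}\cdot\F$ is an $H$-stable flag in $B^{*}\cdot\F$ equal to the flag (\ref{eq:starF}), whence $\F^{*}$ equals that flag.

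For the $B$-orbit, take the linear map $b$ given by $b(e_{a})=e_{a}$ for $a\in S$, $b(e_{j_{\ell}})=-(e_{j_{1}}+\dots+e_{j_{\ell}})$ for $1\le\ell\le k$, and $b(e_{n})=e_{n}+e_{j_{1}}+\dots+e_{j_{k}}$; one checks $b$ is invertible (its inverse sends $e_{a}\mapsto e_{a}$ for $a\in S$, $e_{j_{1}}\mapsto-e_{j_{1}}$, $e_{j_{m}}\mapsto e_{j_{m-1}}-e_{j_{m}}$ for $2\le m\le k$, and $e_{n}\mapsto e_{n}+e_{j_{k}}$). Since $j_{1}<\dots<j_{k}<n$, we have $b(e_{j_{\ell}})\in\mbox{span}\{e_{1},\dots,e_{j_{\ell}}\}$, so $b$ is upper triangular, i.e.\ $b\in B$. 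A telescoping computation gives $b(\he_{j_{m}})=b(e_{j_{m}})+b(e_{n})=e_{n}+e_{j_{m+1}}+\dots+e_{j_{k}}$ for each $m$, in particular $b(\he_{j_{k}})=e_{n}$. Let $V_{\ell}$ and $\tilde{V}_{\ell}$ denote the $\ell$-th subspaces of $\F$ and of the flag (\ref{eq:tildeF}) respectively. By the structural description, $\tilde{V}_{i_{m}}$ contains $e_{n},e_{j_{k}},\dots,e_{j_{m+1}}$ and $\tilde{V}_{p}$ contains $e_{n},e_{j_{k}},\dots,e_{j_{1}}$; together with the displayed formula for $b(\he_{j_{m}})$ and with $b(e_{n})=e_{n}+e_{j_{1}}+\dots+e_{j_{k}}$, this shows $b(v_{\ell})\in\tilde{V}_{\ell}$ at every position $\ell$. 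Hence $b\cdot V_{\ell}\subseteq\tilde{V}_{\ell}$ for all $\ell$, and since $b$ is invertible both sides have dimension $\ell$, so $b\cdot\F$ equals the coordinate flag (\ref{eq:tildeF}), which is therefore $\tilde{\F}$.

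I expect the $B$-statement to be the main obstacle: the correct $b$ has to be found (the $B^{*}$-case needs only a single unipotent), and then verifying $b(v_{\ell})\in\tilde{V}_{\ell}$ at the hat positions requires careful bookkeeping of the two orderings $j_{1}<\dots<j_{k}<n$ and $i_{k}<\dots<i_{1}<p$ together with the structural list of which $e_{a}$ appear. An alternative to producing $b$ is to check directly that the flag (\ref{eq:tildeF}) has the same relative position with respect to $\mathcal{E}_{+}$ as $\F$, i.e.\ that $\dim\bigl(\tilde{V}_{i}\cap\mbox{span}\{e_{1},\dots,e_{j}\}\bigr)=\dim\bigl(V_{i}\cap\mbox{span}\{e_{1},\dots,e_{j}\}\bigr)$ for all $i,j$, which reduces to the same computation.
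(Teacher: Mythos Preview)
Your proof is correct and follows essentially the same strategy as the paper: exhibit explicit elements $b\in B$ and $b^{*}\in B^{*}$ relating $\F$ to the claimed coordinate flags. The only cosmetic difference is direction: the paper defines $b$ by $e_{n}\mapsto\he_{j_{k}}$, $e_{j_{m}}\mapsto -e_{j_{m}}+e_{j_{m-1}}$ for $m\ge 2$, $e_{\ell}\mapsto e_{\ell}$ otherwise, and checks that $b\cdot\tilde{\F}=\F$, whereas you build a $b$ carrying $\F$ to $\tilde{\F}$; your $b^{-1}$ agrees with the paper's $b$ up to a diagonal sign. Your added structural paragraph (explaining why no hat index also appears as a standard-vector index, hence why the target flags are genuinely coordinate flags) makes explicit something the paper leaves implicit, which is a nice touch.
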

\begin{proof}
If $\F$ contains no hat vectors, then $\F$ is $H$-stable, whence $\F=\tilde{\F}=\F^{*}$. 
Now suppose that $\F$ contains hat vectors.  Equation (\ref{eq:starF}) is clear since 
there exists a $b^{*}\in B^{*}$ so that $b^{*}\cdot \hat{e}_{j_{i}}=e_{j_{i}}$ for $i=1,\dots,k.$ To prove Equation (\ref{eq:tildeF}) consider the element $b\in G$ whose action on the standard basis of $\C^{n}$ is 
given by 
\begin{equation}\label{eq:tostd}
b: e_{n}\mapsto \he_{j_{k}},\; e_{j_{m}}\mapsto -e_{j_{m}}+e_{j_{m-1}} \mbox{ for } m=2,\dots, k,\mbox{ and } b: e_{\ell}\mapsto e_{\ell} \mbox{ for all other } \ell.
\end{equation}
It follows from the definition of the standard form (see Definition \ref{d:std}) that the element $b\in B$.  
We compute 
\begin{equation*}
\begin{split}
b\cdot \tilde{\F}&=(v_{1}\subset\dots\subset \underbrace{\he_{j_{k}}}_{i_{k}}\subset\dots \subset \underbrace{-e_{j_{k}}+e_{j_{k-1}}}_{i_{k-1}}\subset\dots \subset \underbrace{-e_{j_{2}}+e_{j_{1}}}_{i_{1}}\subset \dots\subset \underbrace{e_{j_{1}}}_{p}\subset\dots \subset v_{n})\\
&=(v_{1}\subset\dots\subset \underbrace{\he_{j_{k}}}_{i_{k}}\subset\dots\subset \underbrace{\he_{j_{k-1}}}_{i_{k-1}}\subset\dots\ldots\ldots\subset\underbrace{\he_{j_{1}}}_{i_{1}}\subset\dots\ldots\ldots\subset \underbrace{e_{n}}_{p}\subset\dots\subset v_{n}),
\end{split}
\end{equation*}
which is the flag in Equation (\ref{eq:hatvectorF}). 
\end{proof}

\begin{proof}[Proof of Theorem \ref{thm:orbitsintersect}]
We define a map 
\begin{equation}\label{eq:Phidefn}
\Phi: \{\F\in\B_{n}:\F \mbox{ is a flag in standard form} \}\to \B_{n}\times\B_{n} \mbox{ given by } \Phi(\F)=(\tilde{\F}, \F^{*}),
\end{equation}
where $\tilde{\F}$ and $\F^{*}$ are the flags given in Notation \ref{n:tildeandstarflags}.  
To prove the theorem, note that it suffices to show that the map $\Phi$ is injective.  Indeed, since $B\cap B^{*}=B_{n-1}Z$, for flags $\F$ and $\G$ in standard form, then $B_{n-1}\cdot \F \subset B\cdot\F \cap B^{*}\cdot \F$ and $B_{n-1}\cdot \G \subset B\cdot\G \cap B^{*}\cdot \G.$  If $B_{n-1}\cdot \G$ is another $B_{n-1}$-orbit in $B\cdot \F \cap B^{*}\cdot \F$,  then it follows that $B\cdot\F=B\cdot \G$ and $B^{*}\cdot \F=B^{*}\cdot \G$. Hence, we deduce 
that $\tilde{\F}=\tilde{\G}$ and $\F^{*}=\G^{*}.$  Assuming $\Phi$ is injective, we have
$\F=\G$ and therefore $B_{n-1}\cdot \F=B_{n-1}\cdot \G$. 

To show that the map $\Phi$ in (\ref{eq:Phidefn}) is injective, we need to show that the flag $\F=(v_{1}\subset v_{2}\subset \dots \subset v_{k}\subset \dots \subset v_{n})$ in standard form is uniquely determined by 
the flags $\tilde{\F}$ and $\F^{*}$.  We first note that by Equations (\ref{eq:tildeF}) and (\ref{eq:starF}), the flags $\tilde{\F}$ and $\F^*$ coincide if and only if $\F$ has no hat vectors.   It follows that it suffices to show injectivity of $\Phi$ separately on standard flags without hat vectors and on standard flags with hat vectors. For standard flags $\F$ with no hat vectors then injectivity is clear since $\F=\tilde{\F}=\F^{*}$ by Proposition \ref{p:orbits} (1).    

On the other hand, suppose $\F$ has hat vectors and is written as in Equation \eqref{eq:hatvectorF}.  Let $\tilde{\F}=(\tilde{v}_{1}\subset \tilde{v}_{2}\subset \dots\subset \tilde{v}_{n})$, and let $\F^{*}=(v_{1}^{*}\subset v_{2}^{*}\subset \dots \subset v_{n}^{*})$.  Suppose that $i_{k}$ is the first index such that $\tilde{v}_{i_{k}}\neq v_{i_{k}}^{*}$.  By Equations (\ref{eq:hatvectorF})-(\ref{eq:starF}), $\tilde{v}_{i_{k}}=e_{n}$ and $v_{i_{k}}^{*}=e_{j_{k}}$, and the vector $v_{i_{k}}$ for $\F$ is ${\hat{e}}_{j_{k}}$.  If $\tilde{v}_{i_{k-1}}=e_{j_{k}}$, then $v_{i_{k-1}}={\hat{e}}_{j_{k-1}}$ where $v_{i_{k-1}}^*=e_{j_{k-1}}$.  The remaining hat vectors are determined similarly.  On the other hand, if $\ell$ is such that $\tilde{v}_{\ell}=v_{\ell}^{*}$, then Equations (\ref{eq:hatvectorF})-(\ref{eq:starF}) imply that $v_{\ell}$ is a standard basis vector with $v_{\ell}=\tilde{v}_{\ell}=v^{*}_{\ell}$.  It follows that if $\G$ is another flag in standard form 
with $\tilde{\G}=\tilde{\F}$ and $\G^{*}=\F^{*}$, then $\G=\F$.   Thus, the map 
$\Phi$ in (\ref{eq:Phidefn}) is injective and the proof is complete.  


\end{proof}

\begin{dfn}\label{d:geoShpair}
Let $Q_B$ be a $B$-orbit and let $Q_{B^*}$ be a $B^*$-orbit in $\B_n$.  We call the pair $(Q_B,Q_{B^*})$ a {\it geometric Shareshian pair} if $Q_B \cap Q_{B^*}$ is nonempty, in which case it is a single $B_{n-1}$-orbit by Equation (\ref{eq:intersection}).
\end{dfn}

\begin{rem}\label{r:geoshar}
The map $\Sh$ from $B_{n-1}\backslash \B_n$ to geometric Shareshian pairs given by $Q\mapsto (B\cdot Q, B^*\cdot Q)$ is bijective with the inverse given by taking the intersection of the given $B$ and $B^{*}$-orbits.  This is a restatement of Corollary \ref{c:intersect}.
\end{rem}

We call the map $\Sh$ the {\it Shareshian map}, and we can describe it combinatorially as follows.   By the Bruhat decomposition, we can write any geometric Shareshian pair as $(B\cdot w(\mathcal{E}_{+}),B^*\cdot u^{*}(\mathcal{E}^{*}))$ for unique Weyl group elements $w,\, u^{*}\in W$.
\begin{dfn}\label{d:SHmap}
The Shareshian map is given by:
\begin{equation}\label{eq:Shmap}
\begin{split}
&\Sh: \Borbitspace\to W\times W; \;\Sh(B_{n-1}\cdot\F)=(w,u^{*})\mbox{ where } \F \mbox{ is in standard form and }\\
& \tilde{\F}=w(\mathcal{E}_{+})
\mbox{ and }\F^{*}=u^{*}(\mathcal{E}^{*})\mbox{ are the flags given in Notation \ref{n:tildeandstarflags}.}
\end{split}
\end{equation}
\end{dfn}

\begin{nota}\label{d:Shpair}
 We refer to a pair of Weyl group 
elements $(w,y)\in W\times W$ such that $(w,y)=\Sh(Q)$ for some $Q\in \Borbitspace$,  as a \emph{Shareshian pair}.  We denote the subset of $W\times W$ consisting of all Shareshian pairs as $\Sp\subset W\times W$.  
\end{nota}
\noindent The following statement is immediate from Remark \ref{r:geoshar}.
\begin{cor}\label{c:Shpair}
The Shareshian map $\Sh:B_{n-1}\backslash \B_n \to \Sp$ is bijective.
\end{cor}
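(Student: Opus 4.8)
The statement \textbf{Corollary \ref{c:Shpair}} asserts that the Shareshian map $\Sh:\Borbitspace\to\Sp$ is bijective. Since $\Sp$ is defined (Notation \ref{d:Shpair}) precisely as the image of $\Sh$, surjectivity is automatic, so the only content is injectivity, and the note preceding the Corollary already says this follows from Remark \ref{r:geoshar}.

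The plan is to unwind the two steps of composition through which $\Sh$ is defined in Definition \ref{d:SHmap}. First I would record that by Theorem \ref{thm:standardforms} every $B_{n-1}$-orbit $Q$ on $\B_n$ equals $B_{n-1}\cdot\F$ for a \emph{unique} flag $\F$ in standard form, so it suffices to show that the assignment $\F\mapsto(w,u^{*})$, where $\tilde{\F}=w(\mathcal{E}_{+})$ and $\F^{*}=u^{*}(\mathcal{E}^{*})$, is injective on flags in standard form. Next I would observe that since $B$ and $B^{*}$ both contain the standard Cartan $H$, the Bruhat decomposition guarantees that the $H$-fixed flag in a given $B$-orbit (respectively $B^{*}$-orbit) is unique, so the Weyl group elements $w$ and $u^{*}$ are in bijective correspondence with the flags $\tilde{\F}$ and $\F^{*}$ respectively; thus knowing $(w,u^{*})$ is the same as knowing the pair $(\tilde{\F},\F^{*})=\Phi(\F)$ in the notation of \eqref{eq:Phidefn}.

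With this reduction in hand, injectivity of $\Sh$ is exactly injectivity of the map $\Phi$ of \eqref{eq:Phidefn}, which was already established in the proof of Theorem \ref{thm:orbitsintersect}: a flag $\F$ in standard form is reconstructed from $\tilde{\F}$ and $\F^{*}$ by comparing them coordinatewise (the positions where $\tilde{v}_\ell=v^{*}_\ell$ carry standard basis vectors of $\F$, while at the positions where they differ the explicit formulas \eqref{eq:tildeF}--\eqref{eq:starF} pin down the hat vectors $\hat{e}_{j_i}$). Hence if $\Sh(Q)=\Sh(Q')$ with $Q=B_{n-1}\cdot\F$ and $Q'=B_{n-1}\cdot\G$ for $\F,\G$ in standard form, then $\tilde{\F}=\tilde{\G}$ and $\F^{*}=\G^{*}$, so $\F=\G$ by injectivity of $\Phi$, and therefore $Q=Q'$.

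There is essentially no obstacle here, since all the real work was done in proving Theorem \ref{thm:orbitsintersect} (injectivity of $\Phi$) and Theorem \ref{thm:standardforms} (uniqueness of the standard-form representative); the Corollary is a bookkeeping consequence, and the main point to get right is simply the translation between Weyl group elements and $H$-fixed flags via the Bruhat decomposition, which is routine because $H\subset B\cap B^{*}$. One could equivalently cite Remark \ref{r:geoshar} together with Corollary \ref{c:intersect} directly: the composite of the bijection $Q\mapsto(B\cdot Q,B^{*}\cdot Q)$ onto geometric Shareshian pairs with the Bruhat-decomposition bijection $(Q_B,Q_{B^*})\mapsto(w,u^{*})$ is exactly $\Sh$, and both factors are bijections onto their images, so $\Sh$ is a bijection onto $\Sp$.
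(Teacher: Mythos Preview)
Your proposal is correct and aligns with the paper's approach: the paper simply declares the Corollary ``immediate from Remark~\ref{r:geoshar},'' which is precisely your final paragraph's argument (compose the bijection $Q\mapsto(B\cdot Q,B^{*}\cdot Q)$ onto geometric Shareshian pairs with the Bruhat-decomposition bijection to Weyl group elements). Your first argument via injectivity of $\Phi$ is also correct but unpacks more than necessary, since that work was already absorbed into Theorem~\ref{thm:orbitsintersect} and Corollary~\ref{c:intersect}.
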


\begin{rem}\label{r:starandtildeflags}
Let $Q=\B_{n-1}\cdot \F\in\Borbitspace$ with $\F$ a flag in standard form and with $\Sh(Q)=(w, u^{*})$.  Then it follows from definitions that $B\cdot Q=B\cdot w(\mathcal{E}_{+})$ and $B^{*}\cdot Q=B^{*}\cdot u^{*}(\mathcal{E}^{*})$. Thus, $(w,u^{*})$ is a Shareshian pair if and only if $B\cdot w(\mathcal{E}_{+})\cap B^{*}\cdot u^{*}(\mathcal{E}^{*})\neq\emptyset$.


\end{rem}

We now describe necessary and sufficient conditions for a pair of Weyl group 
elements $(w,y)\in W\times W$ to be a Shareshian pair, after first introducing some notation.  Let $\Delta=\{j_{1}<j_{2}<\dots<j_{k}<n\}$ be a subsequence of $\{1,\dots,n\}$ 
containing $n$ and let $\tau_{\Delta}\in \mathcal{S}_{n}$ be the $k+1$-cycle
$\tau_{\Delta}:=(n, j_{k}, \dots, j_{2}, j_{1})$. Recall the $n$-cycle $\sigma=(n,n-1,\dots, 2,1)\in\mathcal{S}_{n}$ from Theorem \ref{thm:introdim}.  

\begin{prop}\label{p:Shpairs}
 A pair of Weyl group elements 
$(w, y)\in W\times W$ is a Shareshian pair if and only if there exists a subsequence $\Delta = \{j_{1}<j_{2}<\dots<j_{k}<n\}$ of $\{1,\dots, n\}$ containing $n$ such that: 
\begin{enumerate}
\item $y=\tau_{\Delta}w\sigma^{-1}$ and
\item $\Delta$ is a decreasing sequence for $w^{-1}$, i.e., $w^{-1}(n)<w^{-1}(j_{k})<\dots <w^{-1}(j_{1}).$
\end{enumerate}
\end{prop}
\begin{proof}
Suppose first that $Q\in\Borbitspace$.  We claim that $\Sh(Q)=(w,u^{*})$ satisfies conditions (1) and (2) of the
Proposition.  Let $\F\in Q$ be the unique flag in standard form in the orbit $Q$.  First, suppose that 
$\F$ contains no hat vectors, so that $\F=\tilde{\F}=\F^{*}$ by (1) of Proposition \ref{p:orbits}.  Let $\Delta=\{n\}$, so $\tau_{\Delta}=id$.  Since $\sigma^{-1}(\mathcal{E}^{*})=\mathcal{E_{+}}$ and $\tilde{\F}=w(\mathcal{E}_{+})$ and $\F^{*}=u^{*}(\mathcal{E}^{*})$, it follows that $u^{*}=w\sigma^{-1}$, and condition (2) is automatic.

Now suppose that $\F$ contains hat vectors.  Since $\F$ is in standard form, it is  given by (\ref{eq:hatvectorF}), so that $\tilde{\F}$ and $\F^{*}$ are given by Equations (\ref{eq:tildeF}) and (\ref{eq:starF}) 
respectively.  Let $\Delta:=\{j_{1}<j_{2}<\dots<j_{k}<n\}$, so that $\Delta$ consists of the indices of the hat 
vectors in $\F$ along with $n$.  
From Equations (\ref{eq:tildeF}) and (\ref{eq:starF}), we see that $$\F^{*}=u^{*}(\mathcal{E}^{*})=\tau_{\Delta}\tilde{\F}=\tau_{\Delta} w (\mathcal{E}_{+}).$$  
Since $\sigma^{-1}(\mathcal{E}^{*})=\mathcal{E}_{+}$, it follows that $u^{*}=\tau_{\Delta}w\sigma^{-1}$.   From (\ref{eq:tildeF}), we see $w(i_{k})=n,\, w(i_{s})=j_{s+1}$ for $s=1,\dots, k-1$, and $w(p)=j_{1}$ with $i_k < i_{k-1} < \dots < i_1 < p$, so condition (2) follows.


Conversely, suppose that $(w,y)\in W\times W$ satisfies the conditions of (1) and (2) in the statement of the Proposition.  
Let $\Delta=\{j_{1}<\dots< j_{k}<n\}$.  By condition (2), we have $w^{-1}(n)<w^{-1}(j_{k})<\dots< w^{-1}(j_{1})$.  
Let $w^{-1}(n)=i_{k}$, $w^{-1}(j_{k})=i_{k-1}, \dots w^{-1}(j_{2})=i_{1}$, and $w^{-1}(j_{1})=p$.  Let $\mathcal{G}:=w(\mathcal{E}_{+})$.  
Then 
\begin{equation*}
\begin{split}
&\mathcal{G}=
(e_{w(1)}\subset\dots\subset e_{w(i_{k}-1)}\subset\underbrace{e_{n}}_{i_{k}}\subset e_{w(i_{k}+1)}\subset \dots\subset \underbrace{e_{j_{k}}}_{i_{k-1}}\subset\dots\subset \underbrace{e_{j_{2}}}_{i_{1}}\subset \dots\subset\underbrace{e_{j_{1}}}_{p}\subset\\
&\subset e_{w(p+1)}\subset\dots\subset e_{w(n)}),
\end{split}
\end{equation*}
and it follows that 
\begin{equation*}
\begin{split}
&\tau_{\Delta} (\mathcal{G})=(e_{w(1)}\subset \dots\subset e_{w(i_{k-1})}\subset \underbrace{e_{j_{k}}}_{i_{k}}\subset e_{w(i_{k}+1)}\subset \dots\subset \underbrace{e_{j_{k-1}}}_{i_{k-1}}\subset \dots\subset \underbrace{e_{j_{1}}}_{i_{1}}\subset \dots\subset\underbrace{e_{n}}_{p}\subset\\ &\subset e_{w(p+1)} 
\subset \dots\subset e_{w(n)}).
\end{split}
\end{equation*}
Now let $\F$ be the flag in standard form:
\begin{equation*}
\begin{split}
&\F=(e_{w(1)}\subset\dots\subset e_{w(i_{k}-1)}\subset\underbrace{ \he_{j_{k}}}_{i_{k}}\subset e_{w(i_{k}+1)}\subset \dots \subset \underbrace{\he_{j_{k-1}}}_{i_{k-1}}\subset \dots \subset \underbrace{\he_{j_{1}}}_{i_{1}}\subset \dots\subset \underbrace{e_{n}}_{p}\subset\\
&\subset e_{w(p+1)}\subset \dots\subset e_{w(n)}),
\end{split}
\end{equation*}
Then it follows from Proposition \ref{p:orbits} that $\tilde{\F}=\mathcal{G}$ and $\F^{*}=\tau_{\Delta}(\G)$.  
But $\G=w(\mathcal{E}_{+})$ and $$\tau_{\Delta}(\G)=\tau_{\Delta}w(\mathcal{E}_{+})=\tau_{\Delta}w\sigma^{-1}(\mathcal{E}^{*}).$$  
Thus, $\Sh(B_{n-1}\cdot \F)=(w, \tau_{\Delta}w\sigma^{-1})=(w,y)$ by condition (1).  
Thus, $(w,y)\in \Sp$ as asserted. 
\end{proof}

\begin{rem}\label{r:whatisdelta}
Let $\Sh(Q)=(w, u^{*})$ be the Shareshian pair for an orbit $Q=B_{n-1}\cdot \F$ with $\F$ in standard form.  It follows from Proposition \ref{p:Shpairs} and its proof that  
$\tau_{\Delta}:=u^{*}\sigma w^{-1}$ is a cycle of the form $(n, j_{k},\dots, j_{1})$, and the hat vectors 
in $\F$ are $\he_{j_{k}},\dots, \he_{j_{1}}$. We call the decreasing sequence  $\Delta := \{j_{1}<\dots< j_{k}<n\}$ for $w^{-1}$ the \emph{associated decreasing sequence} for  $Q$, and write $\Delta(w,u^*)$ for $\Delta$.
\end{rem}

\begin{exam}
We describe the set of Shareshian pairs $\Sp$ in $\mathcal{S}_{3}\times\mathcal{S}_{3}$ using Proposition \ref{p:Shpairs}.  In this case the cycle $\sigma=(3,2,1)$, so that $\sigma^{-1}=(1,2,3)$.  For $\Delta=\{3\}$, there is no restriction on the permutation $w$ and the cycle $\tau_{\Delta}=id$.  This gives us six such Shareshian pairs of the form $\{(w, w\sigma^{-1}): w\in \mathcal{S}_{3}\}$.  By part (1) of Proposition \ref{p:orbits} and Remark \ref{r:whatisdelta}, these Shareshian pairs correspond to the six $B_{2}$-orbits $Q=B_{2}\cdot w(\mathcal{E}_{+})$ with $w\in\mathcal{S}_{3}$.   Next, suppose $\Delta=\{2,3\}$.  Now there are only three elements $w$ of $\mathcal{S}_{3}$ with the property that $\Delta$ forms a decreasing sequence for $w^{-1}$:  $w=(2,3),\, (1,3),\, (3,2,1)$.  In this case, the cycle $\tau_{\Delta}=(2,3)$ and the corresponding Shareshian pairs are $((2,3), (1,2,3)),\, ((1,3), (3,2,1)), \, ((3,2,1), (2,3)).$  By Remark \ref{r:whatisdelta}, these three Shareshian pairs correspond to $B_{2}$-orbits $Q=B_{2}\cdot \F$ with $\F$ a flag in standard form with the property that the only hat vector that occurs in $\F$ is $\hat{e}_{2}$.    For $\Delta=\{1,3\}$,  there are also three permutations $w$ satisfying the second condition of Proposition \ref{p:Shpairs}.  They are $w=(3,2,1), \, (1,2,3),$ and $(1,3)$.  Now the cycle $\tau_{\Delta}=(1,3)$, and the respective Shareshian pairs are $((3,2,1), (1,3))$, $((1,2,3), (2,3))$, and $((1,3), (1,2,3))$.  These three pairs correspond to $B_{2}$-orbits through flags in standard form $\F$ where $\hat{e}_{1}$ is the only hat vector occurring in $\F$.  Lastly, there is a unique Shareshian pair with $\Delta=\{1,2, 3\}$.  It is $((1,3), (1,3))$ which corresponds to the $B_{2}$-orbit $Q=B_{2}\cdot \F$ where $\F=(\hat{e}_{2}\subset \hat{e}_{1}\subset e_{3})$.   In total there are 13 Shareshian pairs in $\mathcal{S}_{3}\times\mathcal{S}_{3}$ which coincides with the number of $B_{2}$-orbits on $\B_{3}$ given in Example 7.1 of \cite{CE21II} as asserted by Corollary \ref{c:Shpair}. 

\end{exam}

\subsection{The Magyar Parameterization of $\Borbitspace$}\label{ss:Magyar}
In \cite{Magyar}, Magyar studies $GL(n)$-orbits on $\B_{n}\times\B_{n}\times\mathbb{P}^{n-1}$ under the action $g\cdot (\F_{1}, \F_{2}, [v])=(g\cdot \F_{1}, g\cdot \F_{2}, g\cdot [v])$.  By normalizing $\F_{1}=\mathcal{E}_{+}$, we see that these orbits correspond to 
$B$-orbits on $\B_{n}\times\mathbb{P}^{n-1}$.  Let $\mathcal{O}_{n}\subset \mathbb{P}^{n-1}$ be the $B$-orbit through the line $[e_{n}]\in \mathbb{P}^{n-1}$.  Note that $\mbox{Stab}_{B}[e_{n}]=B_{n-1}Z$, where $Z$ is the centre of $GL(n)$.  Thus, $B$-orbits on $\B_{n}\times \calO_{n}$ are in one-to-one correspondence with $B_{n-1}$-orbits on $\B_{n}$ via the correspondence
\begin{equation}\label{eq:Magyarcorres}
B\cdot(\fb^{\prime}, [e_{n}])\longleftrightarrow  B_{n-1}\cdot\fb^{\prime}.
\end{equation}
It is easy to see that this correspondence preserves closure relations. 
The correspondence in (\ref{eq:Magyarcorres}) along with Magyar's parameterization of $B$-orbits on $\B_{n}\times\mathbb{P}^{n-1}$ using his notion of decorated permutations gives us another parameterization of $\Borbitspace$.
\begin{dfn}\label{d:Magyar}
We call a pair $(w, \Delta)$ an $n$-decorated permutation, if $w\in W$ and $\Delta=\{j_{1}<\dots<j_{k}<n\}\subset \{1,\dots, n\}$ is a decreasing sequence for $w^{-1}$.  
\end{dfn}

By Proposition \ref{p:Shpairs}, the map $(w,u^*)\to (w,\Delta(w,u^*))$ (see Remark \ref{r:whatisdelta}) is bijective, giving an identification between Shareshian pairs and $n$-decorated permutations.  Combined with Equation (\ref{eq:Magyarcorres}), this recovers the identification between $B_{n-1}$-orbits on $\B_n$ and $n$-decorated permutations from \cite{Magyar}.

As a consequence, we have given identifications between three different parametrizations of $\Borbitspace$.
\begin{equation}\label{eq:three}
\{ \mbox{Flags in standard form}\} \longleftrightarrow \{\mbox{Shareshian Pairs}\}\longleftrightarrow\{ \mbox{$n$-decorated permutations}\}.  
\end{equation}

\begin{rem}
Hashimoto constructs a parameterization of $\Borbitspace$ in \cite{Hashi}.  It is easy to see that his parameter set $\mathcal{W}$ given in Proposition 2.1 of \emph{loc.cit.} is naturally in bijection with the other parameterizations of $\Borbitspace$ given in (\ref{eq:three}).
\end{rem}


\begin{rem}\label{r:orthocase}
Our work in \cite{CE21I} and \cite{CE21II} also applies in the setting where 
$G=SO(n)$ is the $n\times n$ complex orthogonal group and $G_{n-1}=SO(n-1)$ viewed as a symmetric subgroup of $SO(n)$.  In particular, we prove that orbits of a Borel $B_{n-1}$ of $G_{n-1}$ on the flag variety $\B_{SO(n)}$ are parametrized by certain isotropic flags in standard form
%
%
in Sections 4.2 and 4.3 of \cite{CE21II} and prove the analogue of Theorem \ref{thm:standardforms}
 (see Theorems 4.17 and 4.22 of \cite{CE21II}).  However, we have not found an analogue of a Shareshian pair in the orthogonal setting.  The issue is that using the realizations of $SO(n)$ and $SO(n-1)$ from \cite{CE21II} which have upper triangular Borel subgroups, there is no Borel subgroup $B^{*}\subset SO(n)$ such that $B\cap B^{*}=ZB_{n-1}$, where $B\subset SO(n)$ is the standard Borel subgroup of upper triangular matrices in $SO(n)$ and $Z\subset SO(n)$ is the centre.  
\end{rem}

\section{Closure Relations on $\Borbitspace$ and the Bruhat ordering on Shareshian Pairs}\label{s:closure}

In this section, we prove the second main result of the paper, Theorem \ref{thm:introclosure}, which states that the closure ordering on $\Borbitspace$ is given by the Bruhat ordering on Shareshian pairs by using a related result of Magyar.  
Consider a $B_{n-1}$-orbit $Q=B_{n-1}\cdot \F$ in $\B_n$, where
 $$
  \mathcal{F}=(V_{1}\subset V_{2}\subset\dots\subset V_{n-1} \subset V_n)
   $$
with $\dim(V_i)=i.$
Recall the standard flag $$\mathcal{E}_+=(E_1\subset E_2 \subset \dots \subset E_{n-1} \subset E_n)$$ with $E_{i}=\mbox{span}\{ e_1, e_2, \dots, e_i\}$ for $i=1, \dots, n$ and $$\mathcal{E}^{*}=(E_1^*\subset E_2^* \subset \dots \subset E_{n-1}^* \subset E_n^*)$$ with $E_i^*=\mbox{span}\{ e_n, e_1, e_2, \dots, e_{i-1}\}$ from the Introduction.  Let $L_n = \C e_n.$

We associate to $Q$ the following invariants:

\begin{equation}\label{eq:rpqandstar}
r_{p,q}(Q)=\dim(V_p\cap E_q), \ \ r^*_{p,q}(Q)=\dim(V_p \cap E_q^*), \ \ p, q=1, \dots, n
\end{equation}
and 

\begin{equation}\label{eq:deltarbar}
\delta_{p,q}(Q)=\dim(L_n \cap (V_p + E_q)), \ \ \overline{r}_{p,q}(Q)=r_{p,q}(Q) + \delta_{p,q}(Q), \ \ p,q=0, \dots, n.
\end{equation}

Since $B_{n-1}$ fixes the flags $\mathcal{E}_+$ and $\mathcal{E}^*$ and the line $L_n$, it follows that these invariants depend only on $Q$ and not on the flag 
$\F \in Q.$

\begin{rem}\label{r:connectionwithMag}
Magyar associates invariants $r_{p,q}$ and $\overline{r}_{p,q}$ to a $G$-orbit on 
$\B_{n}\times\B_{n}\times\mathbb{P}^{n-1}$ as follows. For a line $A$ and flags $B_{\bullet}$ and $C_{\bullet}$ with $B_i$ and $C_i$ the $i$-dimensional subspaces of $B_{\bullet}$ and $C_{\bullet}$ respectively, Magyar defines $r_{p,q}(B_{\bullet},C_{\bullet},A)=\dim(B_p \cap C_q)$ and ${\overline{r}}_{p,q}(B_{\bullet},C_{\bullet},A)=r_{p,q}(B_{\bullet},C_{\bullet},A) + \dim(A \cap (B_p + C_q)).$  Applying the equivalences $ \Borbitspace\leftrightarrow B\backslash(\B_{n}\times\mathcal{O}_{n}) \hookrightarrow G\backslash (\B_{n}\times\B_{n}\times\mathbb{P}^{n-1})$ is equivalent to requiring $B_{\bullet} = \mathcal{E}_+$ and $A=L_n.$  It follows that Magyar's invariants equal the invariants $r_{p,q}(Q)$ and $\overline{r}_{p,q}(Q)$ given in Equations (\ref{eq:rpqandstar}) and (\ref{eq:deltarbar}) respectively.  Since the equivalences preserve closure relations, we can apply Magyar's resullts in our setting.
\end{rem}

Let $S^{\prime}=B\cdot \F^{\prime}$ with
$$\mathcal{F}^{\prime}=(V_{1}^{\prime}\subset V_{2}^{\prime}\subset\dots\subset V_{n-1}^{\prime}\subset V_{n}^{\prime}),
   $$
and let $S=B\cdot \F$ with $\F$ as above.  Recall the standard fact
\begin{equation}\label{eq:standardbruhatorder}
S^{\prime} \subset \overline{S} \iff \dim(V_p^{\prime} \cap E_q) \ge \dim(V_p \cap E_q), \,p, q = 1, \dots, n-1
\end{equation}

\noindent from  Proposition 7 in Section 10.5 of \cite{Fulton}.  By changing the standard basis, it follows that if $S^{\prime, *}=B^* \cdot \F^{\prime}$ and $S^*=B^*\cdot \F$

\begin{equation}\label{eq:starstandard}
S^{\prime, *} \subset \overline{S^*} \iff \dim(V_p^{\prime} \cap E_q^*) \ge \dim(V_p \cap E_q^*),\, p, q = 1, \dots, n-1.
\end{equation}


\begin{thm}\label{thm:magyarorder}(\cite{Magyar}, Theorem 2.2)
Let $Q^{\prime}=B_{n-1}\cdot \F^{\prime}$ and let $Q=B_{n-1}\cdot \F$.  Then
$Q^{\prime} \subset \overline{Q}$ if and only if 
\par\noindent (i) $r_{p,q}(Q^{\prime}) \ge r_{p,q}(Q)$ for $p, q = 1, \dots, n-1$, and 
\par\noindent (ii) $\overline{r}_{p,q}(Q^{\prime}) \ge \overline{r}_{p,q}(Q)$ for $p,q = 0, \dots, n-1$.
\end{thm}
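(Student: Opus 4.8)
I would prove the two implications separately; the forward direction is formal and the converse is the heart of the matter. For \emph{necessity}, observe that for fixed $p,q$ the function $\F=(V_\bullet)\mapsto r_{p,q}=\dim(V_p\cap E_q)$ is upper semicontinuous on $\B_n$, since $\dim(V_p\cap E_q)=p+q-\dim(V_p+E_q)$ and $\F\mapsto\dim(V_p+E_q)$ is lower semicontinuous (the rank of the natural map $V_p\oplus E_q\to\C^n$ can only drop under specialization). Likewise the condition $L_n\subseteq V_p+E_q$ is closed in $\F$, so $\delta_{p,q}=\dim(L_n\cap(V_p+E_q))\in\{0,1\}$ is upper semicontinuous, hence so is $\overline r_{p,q}=r_{p,q}+\delta_{p,q}$. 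Each of these functions is constant on a $B_{n-1}$-orbit because $B_{n-1}$ fixes $\mathcal{E}_+$, $\mathcal{E}^*$ and $L_n$; therefore $Q'\subseteq\overline Q$ forces the inequalities (i) and (ii).

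For \emph{sufficiency} I would work through the closure-preserving identification recalled in Remark \ref{r:connectionwithMag}, under which $\Borbitspace$ becomes the set of $GL(n)$-orbits on the multiple flag variety $\mathcal{X}=\B_n\times\B_n\times\mathbb{P}^{n-1}$, and the numbers $r_{p,q},\ \overline r_{p,q}$ become the ranks of the structure maps, and their composites, attached to a point of $\mathcal{X}$ viewed as a representation of the star-shaped quiver with central node $\C^n$, two arms carrying the complete flags, and one short arm carrying the line. Two complete flags and a line is a finite-type multiple flag variety, so the $GL(n)$-orbits are classified by this rank data, and $GL(n)$-orbit closure corresponds to the degeneration order of the associated representations. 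The content of the theorem is then that, for these configurations, the degeneration order coincides with the order cut out by (i)--(ii): this is the analogue here of the classical fact, for type-$A$ quivers, that the degeneration, dimension, and rank orders all agree. Unwinding it, one obtains from any pair $Q'\ne Q$ satisfying (i)--(ii) an explicit one-parameter specialization of the configuration for $Q$ into that for $Q'$, built from a ``middle extension'' on the quiver side.

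The main obstacle is exactly this converse, and inside it the genuinely hard input is that the rank order is not strictly finer than the closure order, i.e. that every covering relation of the poset defined by (i)--(ii) is realized by an actual specialization of $B_{n-1}$-orbits; this is where one must produce degenerations rather than merely compare invariants. An alternative to the quiver package is an induction on $n$ carried out inside $\B_n$ itself — peeling off the bottom step $V_1$, or the hyperplane $\mathrm{span}\{e_1,\dots,e_{n-1}\}$, fibering a neighborhood of $Q$ over a family of $B_{n-2}$-type orbits on $\B_{n-1}$, and matching covering relations — but the bookkeeping that tracks $r_{p,q}$ and $\overline r_{p,q}$ through the reduction is delicate. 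For this reason the cleanest route in practice, and the one we take, is to invoke Magyar's computation directly (\cite{Magyar}, Theorem 2.2).
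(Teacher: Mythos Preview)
The paper does not prove this theorem at all: it is stated as a citation of Magyar's result (\cite{Magyar}, Theorem~2.2), and the text immediately following simply notes that it is a restatement of Magyar's criterion via Remark~\ref{r:connectionwithMag}. Your proposal is consistent with this --- you too ultimately defer to Magyar for the converse --- so in that sense you match the paper exactly.

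That said, you go further than the paper by supplying an argument for necessity and a conceptual outline of sufficiency. Your necessity argument is correct and standard: upper semicontinuity of $r_{p,q}$ and $\overline r_{p,q}$, together with $B_{n-1}$-invariance, gives (i) and (ii) immediately. Your description of the sufficiency side via the quiver/multiple-flag picture is a fair high-level summary of how Magyar's argument is organized, and you correctly identify the genuine difficulty (producing degenerations realizing each covering relation of the rank order). None of this is in the paper, which simply uses the theorem as a black box; but nothing you have written is wrong, and the added context is helpful background for a reader who wants to know what is inside the citation.
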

\noindent The preceding result is a restatement of a criterion due to Magyar using Remark \ref{r:connectionwithMag}. The next result is used to relate Magyar's criterion to closure relations for $B^{*}$-orbits on $\B_{n}$.

\begin{prop}\label{prop:rstarrbar}
Let $Q$ be a $B_{n-1}$-orbit on $\B_n.$  Then $r^*_{p,q+1}(Q)=\overline{r}_{p,q}(Q)$ for $p,q =0, \dots, n-1.$
\end{prop}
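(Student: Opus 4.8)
The plan is to reduce the identity to a single elementary fact about subspaces and a line, with no reference to $Q$. The key observation is the shape of $\mathcal{E}^{*}$: for $0\le q\le n-1$ one has
\[
E_{q+1}^{*}=\mathrm{span}\{e_{n},e_{1},\dots,e_{q}\}=L_{n}\oplus E_{q},
\]
the sum being direct since $e_{n}\notin E_{q}$ for $q\le n-1$. Plugging this into the definitions and recalling $\overline{r}_{p,q}(Q)=r_{p,q}(Q)+\delta_{p,q}(Q)$, the assertion $r^{*}_{p,q+1}(Q)=\overline{r}_{p,q}(Q)$ becomes
\[
\dim\bigl(V_{p}\cap(L_{n}\oplus E_{q})\bigr)=\dim(V_{p}\cap E_{q})+\dim\bigl(L_{n}\cap(V_{p}+E_{q})\bigr),
\]
which is purely linear-algebraic and involves $V_{p}$, $E_{q}$ and $L_{n}$ only through their mutual positions.

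I would therefore state and prove the following lemma: for subspaces $W,U$ of $\C^{n}$ and a line $L$ with $L\cap U=0$,
\[
\dim\bigl(W\cap(L\oplus U)\bigr)=\dim(W\cap U)+\dim\bigl(L\cap(W+U)\bigr).
\]
Write $A:=W\cap(L\oplus U)$. Since $U\subseteq A+U\subseteq L\oplus U$ and $\dim(L\oplus U)=\dim U+1$, either $A+U=U$ or $A+U=L\oplus U$. Grassmann's formula for $A$ and $U$, together with $A\cap U=W\cap(L\oplus U)\cap U=W\cap U$, gives $\dim A-\dim(W\cap U)=\dim(A+U)-\dim U\in\{0,1\}$. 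It remains to match this with $\dim(L\cap(W+U))\in\{0,1\}$, i.e.\ to show $A+U=L\oplus U\iff L\subseteq W+U$. For one direction, $A+U\subseteq W+U$ always, so $L\subseteq L\oplus U=A+U$ forces $L\subseteq W+U$; for the converse, if $\ell\in L$ and $\ell=w+u$ with $w\in W$, $u\in U$, then $w=\ell-u\in W\cap(L\oplus U)=A$, hence $\ell\in A+U$ and $L\oplus U=A+U$. This proves the lemma.

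Applying the lemma with $W=V_{p}$, $U=E_{q}$, $L=L_{n}$ then finishes the proof: the hypothesis $L_{n}\cap E_{q}=0$ is exactly the condition $q\le n-1$, and the boundary cases $p=0$ (where $V_{p}=0$) and $q=0$ (where $E_{q}=0$ and $E_{q+1}^{*}=L_{n}$) are immediate from the defining formulas. I do not foresee a real obstacle here: the only points demanding attention are the direct-sum decomposition of $E_{q+1}^{*}$ (valid precisely in the stated range of $q$) and the codimension-one dichotomy for $A+U$ inside $L\oplus U$; everything else is Grassmann's identity.
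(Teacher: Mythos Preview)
Your proof is correct and uses essentially the same approach as the paper: both hinge on the identity $E_{q+1}^{*}=E_{q}+L_{n}$ and then finish with an elementary dimension count. The only difference is in packaging: the paper applies the second isomorphism theorem to $L_{n}$ and $V_{p}+E_{q}$ to get $1-\delta_{p,q}=\dim(V_{p}+E_{q+1}^{*})-\dim(V_{p}+E_{q})$ and then converts sums to intersections via Grassmann, whereas you work directly with intersections and isolate a clean general lemma about a line $L$ transverse to $U$, proving it by the codimension-one dichotomy for $A+U$ inside $L\oplus U$. Your formulation has the mild advantage of making the $\{0,1\}$-valued nature of $\delta_{p,q}$ explicit in the argument.
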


\begin{proof}
Since $E_q^*=\mbox{span}\{ e_n, e_1, \dots, e_{q-1}\},$ it follows that 
\begin{equation}\label{e.starplusline}
E_q^*=E_{q-1} + L_n.
\end{equation}
Let $Q=B_{n-1}\cdot \F$ and let $V_p$ be the $p$-th subspace in the flag $\F$.
   By the second isomorphism theorem for modules,
\begin{equation}
L_n/(L_n \cap (V_p + E_q)) \cong (L_n + V_p + E_q)/(V_p + E_q).
\end{equation}
It follows that
\begin{equation}
  \dim(L_n)-\dim(L_n\cap (V_p + E_q)) = \dim(L_n + V_p + E_q) - \dim(V_p + E_q)
\end{equation}
so that by Equation \eqref{e.starplusline}
\begin{equation}\label{e.deltaleft}
1 - \delta_{p,q}(Q) = \dim(V_p + E_{q+1}^*) - \dim(V_p + E_q).
\end{equation}
Note that 
\begin{equation}\label{e.vpluse}
\dim(V_p + E_q) = p + q - \dim(V_p \cap E_q)=p+q-r_{p,q}(Q)
\end{equation}
and similarly
\begin{equation}\label{e.vplusstar}
\dim(V_p + E_{q+1}^*)=p+q+1-r^*_{p,q+1}(Q).
\end{equation}
By Equations \eqref{e.deltaleft}, \eqref{e.vpluse}, and \eqref{e.vplusstar}, we conclude that 
$$r^*_{p,q+1}(Q)=\delta_{p,q}(Q) + r_{p,q}(Q) = \overline{r}_{p,q}(Q),$$ which verifies the Proposition.
\end{proof}

Recall the map $\Sh:\Borbitspace \to W\times W$ from Equation (\ref{eq:Shmap}) and the Bruhat order on Shareshian pairs given by the restriction of the product of Bruhat orders on $W\times W$, where in the second factor the Bruhat order is the one defined with respect to the simple reflections $S^{*}$ defined in Equation (\ref{eq:introgenerators}) corresponding to the Borel subalgebra $\fb^{*}$.

\begin{thm}\label{t:sharequalbruhat}
Let $Q$ and $Q^{\prime}$ be $B_{n-1}$-orbits on $\B_n$.   Then
 $$
Q^{\prime}\subset\overline{Q}\Leftrightarrow \Sh(Q^{\prime})\leq \Sh(Q). 
$$
\end{thm}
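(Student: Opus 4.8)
The plan is to reduce the statement to Magyar's closure criterion (Theorem \ref{thm:magyarorder}) together with the standard Bruhat-order characterizations in Equations (\ref{eq:standardbruhatorder}) and (\ref{eq:starstandard}), using Proposition \ref{prop:rstarrbar} as the bridge between the $\overline r$-invariants and the $r^*$-invariants. Write $\Sh(Q)=(w,u^*)$ and $\Sh(Q')=(y,v^*)$. By definition of $\Sh$ (Definition \ref{d:SHmap} and Remark \ref{r:starandtildeflags}), we have $B\cdot Q = B\cdot w(\mathcal E_+)$, $B\cdot Q' = B\cdot y(\mathcal E_+)$, $B^*\cdot Q = B^*\cdot u^*(\mathcal E^*)$, and $B^*\cdot Q' = B^*\cdot v^*(\mathcal E^*)$. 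The Bruhat order on $W$ with respect to the standard simple reflections satisfies $y\le w$ iff $B\cdot y(\mathcal E_+)\subset \overline{B\cdot w(\mathcal E_+)}$, and the Bruhat order with respect to $S^*$ satisfies $v^*\le u^*$ iff $B^*\cdot v^*(\mathcal E^*)\subset \overline{B^*\cdot u^*(\mathcal E^*)}$; this is precisely what (\ref{eq:starstandard}) encodes after the change of basis sending $\mathcal E_+$ to $\mathcal E^*$ and $B$ to $B^*$. Combining these with (\ref{eq:standardbruhatorder}) and (\ref{eq:starstandard}), the condition $\Sh(Q')\le\Sh(Q)$ is equivalent to the conjunction: $r_{p,q}(Q')\ge r_{p,q}(Q)$ for $p,q=1,\dots,n-1$, \emph{and} $r^*_{p,q}(Q')\ge r^*_{p,q}(Q)$ for $p,q=1,\dots,n-1$.

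Next I would rewrite the second family of inequalities using Proposition \ref{prop:rstarrbar}: $r^*_{p,q+1}(Q)=\overline r_{p,q}(Q)$ for $p,q=0,\dots,n-1$, so $r^*_{p,q}(Q')\ge r^*_{p,q}(Q)$ for $p=1,\dots,n-1$ and $q=2,\dots,n-1$ is the same as $\overline r_{p,q-1}(Q')\ge\overline r_{p,q-1}(Q)$ for those ranges, i.e.\ $\overline r_{p,q}(Q')\ge\overline r_{p,q}(Q)$ for $p=1,\dots,n-1$ and $q=1,\dots,n-2$. Comparing with Magyar's criterion, which asks for $r_{p,q}(Q')\ge r_{p,q}(Q)$ for $p,q=1,\dots,n-1$ and $\overline r_{p,q}(Q')\ge\overline r_{p,q}(Q)$ for $p,q=0,\dots,n-1$, one sees that $\Sh(Q')\le\Sh(Q)$ immediately implies $Q'\subset\overline Q$ \emph{except possibly} for the boundary indices that Magyar includes but the naive translation above omits: the cases $p=0$, $q=n-1$, and $p=n$ in the $r$-inequalities, and the cases $p=0$, $p=n$, $q=0$, $q=n-1$, $q=n$ in the $\overline r$-inequalities. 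So the crux of the argument is to show that these boundary inequalities are \emph{automatic}, holding for every pair of orbits, hence contribute nothing to either order.

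The boundary cases should all be elementary. When $p=n$ we have $V_n=V_n'=\C^n$, so $r_{n,q}(Q)=q$ for all $q$, and $\delta_{n,q}(Q)=\dim(L_n\cap \C^n)=1$, giving $\overline r_{n,q}(Q)=q+1$ independent of $Q$; similarly $p=0$ forces $V_0=0$, so $r_{0,q}(Q)=0$ and $\overline r_{0,q}(Q)=\dim(L_n\cap E_q)$ which equals $0$ for $q<n$ and $1$ for $q=n$, again independent of $Q$. The cases $q=0$ and $q=n$ are handled the same way using $E_0=0$ and $E_n=\C^n$. Thus every Magyar inequality indexed outside the range $1\le p,q\le n-1$ is an equality valid for all $Q$, so Magyar's criterion is equivalent to its restriction to $1\le p,q\le n-1$, which is exactly the condition obtained from $\Sh(Q')\le\Sh(Q)$ via the two standard Bruhat characterizations and Proposition \ref{prop:rstarrbar}. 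This gives both implications simultaneously and completes the proof.

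\textbf{Main obstacle.} The one point requiring care is the precise bookkeeping of index ranges: matching the range $p,q=0,\dots,n-1$ in Magyar's $\overline r$-condition against the range $p,q=1,\dots,n-1$ in the standard Bruhat condition for $B^*$-orbits, after the shift $q\mapsto q+1$ coming from Proposition \ref{prop:rstarrbar} (note that proposition only gives $r^*_{p,q+1}=\overline r_{p,q}$, so the top index $q=n$ for $r^*$ and the relation between $r^*_{p,n}$ and $\overline r_{p,n-1}$ must be checked). I expect the verification that all out-of-range inequalities are $Q$-independent equalities to be routine but slightly tedious, and it is the step most likely to hide an off-by-one error; everything else is a direct citation of the quoted results.
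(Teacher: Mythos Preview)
Your proposal is correct and follows essentially the same approach as the paper: reduce to Magyar's criterion via Proposition \ref{prop:rstarrbar} and the standard Bruhat characterizations (\ref{eq:standardbruhatorder}) and (\ref{eq:starstandard}). The paper's proof is terser and silently absorbs the boundary-index bookkeeping you carry out explicitly (it writes the final $r$ and $r^*$ conditions over $p,q=1,\dots,n$ rather than $1,\dots,n-1$, implicitly using that the extra cases are automatic); your stated intermediate ranges contain a couple of the off-by-one slips you anticipated (e.g., the $r^*$-range coming from (\ref{eq:starstandard}) should be $q=1,\dots,n-1$, translating to $\overline r_{p,q}$ for $q=0,\dots,n-2$), but your subsequent verification that all out-of-range invariants are $Q$-independent covers everything needed and the argument goes through.
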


\begin{proof}
By Theorem \ref{thm:magyarorder}, $Q^{\prime} \subset \overline{Q}$ if and only if $r_{p,q}(Q^{\prime}) \ge r_{p,q}(Q)$ for $p,q =1, \dots, n-1$ and 
$\overline{r}_{p,q}(Q^{\prime}) \ge \overline{r}_{p,q}(Q)$ for $p,q = 0, \dots, n-1$.   By Proposition \ref{prop:rstarrbar},
 $\overline{r}_{p,q}(Q^{\prime}) \ge \overline{r}_{p,q}(Q)$ if and only if
$r^*_{p,q+1}(Q^{\prime}) \ge r^*_{p,q+1}(Q)$ for $p,q =0, \dots, n-1.$  By 
Equations \eqref{eq:standardbruhatorder} and \eqref{eq:starstandard}, $r_{p,q}(Q^{\prime}) \ge r_{p,q}(Q)$ and $r^*_{p,q}(Q^{\prime}) \ge r^*_{p,q}(Q)$ for $p,\, q=1,\dots, n$ if and only if $B\cdot Q^{\prime} \subset \overline{B\cdot Q}$ and $B^* \cdot Q^{\prime} \subset \overline{B^* \cdot Q},$
and this last condition is equivalent to $\Sh(Q^{\prime})\le \Sh(Q).$
\end{proof}

\begin{cor}
Let $Q\in\Borbitspace$ with $Q_{B}=B\cdot Q$ and $Q_{B^{*}}=B^{*}\cdot Q$, so 
that $Q=Q_{B}\cap Q_{B^{*}}$ by Remark \ref{r:geoshar}.  Then 
$\overline{Q}=\overline{Q_{B}}\cap \overline{Q_{B^{*}}}$.  
\end{cor}

\begin{proof}
In the notation of Definition \ref{d:geoShpair}, $\overline{Q} = \overline{Q_B \cap Q_{B^{*}}} \subset \overline{Q_B} \cap \overline{Q_{B^{*}}}.$   Let $Q^{\prime} \subset \overline{Q_B} \cap \overline{Q_{B^{*}}}.$  Then $B\cdot Q^{\prime} \subset \overline{Q_B}$ and $B^{*}\cdot Q^{\prime} \subset \overline{Q_{B^{*}}}.$  Hence,
$\Sh(Q^{\prime}) \le \Sh(Q),$ so that $Q^{\prime} \subset \overline{Q}$ by Theorem \ref{t:sharequalbruhat}, and the Corollary follows.
\end{proof}

\begin{rem}\label{r:Magyarmore}
In \cite{Magyar}, the author describes the closure ordering on the entire set 
of $B$-orbits on $\B_{n}\times\mathbb{P}^{n-1}$, whereas our work in this paper applies only to $B$-orbits on $\B_{n}\times\calO_{n}$, with $\calO_{n}=B\cdot [e_{n}]$.  In a subsequent paper, we will describe the closure ordering $B\backslash (\B_{n}\times \mathcal{O}_{i}$) where $\mathcal{O}_{i}=B\cdot[e_{i}]$ for any $i=1,\dots, n$ using an analogue of Shareshian pairs and a generalization of Theorem \ref{t:sharequalbruhat} and use this to describe the closure relations on $B\backslash (\B_{n}\times\mathbb{P}^{n-1}).$  

\end{rem}

Using the parameterization of $\Borbitspace$ by Shareshian pairs, we can understand the structure of $\Borbitspace$ in a simpler manner than in \cite{CE21I} and \cite{CE21II} or \cite{Magyar}.  In the next section, we show that the extended monoid action on $\Borbitspace$ is given by extending a version of the left and right monoid actions on $W$ to the product $W\times W$ in a manner which preserves Shareshian pairs.


\section{The Monoid Action and Shareshian Pairs}\label{s:monoid}

We now show the extended monoid action by simple roots of $\fg$ and $\fk:=\fg_{n-1}$ on $\Borbitspace$ studied in \cite{CE21I} and \cite{CE21II} can be computed using a monoid action on the set $\Sp$ of all Shareshian pairs.  We give a formula for the dimension of a $B_{n-1}$-orbit in terms of its Shareshian pair.  We begin by recalling the construction of a monoid action on the set of orbits of an algebraic subgroup $M$ of an algebraic group $R$ acting on the flag variety $\B_{R}$ of $R$  with finitely many orbits.  

\subsection{Background on Monoid actions}\label{ss:monoidbackground}
For more details on the subsequent material, we refer the reader to \cite{RS}, \cite{Vg}, \cite{CEexp}, \cite{CE21I} and other sources.
Let $R$ be a connected reductive algebraic group, let $\B=\B_{R}$ be the flag 
variety of $R$, and let $M$ be an algebraic subgroup of $R$ acting on $\B$ with finitely many orbits.  Identify $\B \cong R/B_{R}$, for a Borel subgroup $B_{R}\subset R$ and let $\fb_{\fr}=\mbox{Lie}(B_{R})\subset\fr$.  
Let $\Pi_{\fr}$ be the set of simple roots defined by the Borel subalgebra $\fb_{\fr}$, and let $S_{R}$ be the simple reflections of the Weyl group $W$ of $R$ corresponding to $\Pi_{\fr}$.  For $\alpha\in \Pi_{\fr}$, let $\mathcal{P}_{\alpha}$ be the variety of all parabolic subalgebras of $\fr$ of type $\alpha$ and consider the $\mathbb{P}^{1}$-bundle $\pi_{\alpha}:\B \to {\mathcal{P}}_{\alpha}$.  
For $\alpha \in \Pi_{\fr}$ with corresponding reflection $s=s_{\alpha} \in W$, we define an operator $m(s)$ on the set of orbits $M\backslash \B$ following the above sources.  For $Q_M \in M\backslash \B$, let $m(s)*Q_{M}$ be the unique $M$-orbit which is open and dense in $\pi_{\alpha}^{-1}(\pi_{\alpha}(Q_{M})).$   Note that $\pi_{\alpha}: \pi_{\alpha}^{-1}(\pi_{\alpha}(Q_{M}))\to \pi_{\alpha}(Q_{M})$ is an $M$-equivariant 
$\mathbb{P}^{1}$-bundle. Thus, $\dim(\pi_{\alpha}^{-1}(\pi_{\alpha}(Q_{M})))=\dim(\pi_{\alpha}(Q_{M}))+1,$ and since $Q_M \subset \pi_{\alpha}^{-1}(\pi_{\alpha}(Q_{M}))$, it follows that the orbit $Q_{M} = m(s)*Q_{M}$ if and only if $\dim(m(s)*Q_{M})=\dim(Q_{M}).$

\begin{rem}\label{r:monoidmove}
It follows that  $Q_{M} \not= m(s)*Q_{M}$ if and only if $\dim(m(s)*Q_{M})=\dim(Q_{M})+1.$
\end{rem}

Computation of $m(s)*Q_{M}$ depends on the \emph{the type of the root} $\alpha$ for the orbit $Q_{M}$, which is determined as follows.  For $\fb^{\prime}\in Q_M$, let $\fp_{\alpha}^{\prime} = \pi_{\alpha}(\fb^{\prime})$ and let $B^{\prime}$ and $P_{\alpha}^{\prime}$ be the corresponding parabolic subgroups of $R$, and let $V_{\alpha}^{\prime}$ be the the solvable radical of $P_{\alpha}^{\prime}$.  
Consider the group 
$S_{\alpha}^{\prime}:= P_{\alpha}^{\prime}/V_{\alpha}^{\prime}$ isogenous to $SL(2)$ and its subgroup 
$M_{\alpha, \fb^{\prime}}:= (M \cap P_{\alpha}^{\prime})/(M\cap V_{\alpha}^{\prime})$.   
An easy argument using base change for the inclusion $Q \to \B_n$ with respect to the morphism $\B_n \to G/P_{\alpha}^{\prime}$ gives an identification 
\begin{equation}\label{e:monoidid}
\pi_{\alpha}^{-1}(\pi_{\alpha}(Q)) \cong M\times_{M\cap P_{\alpha}^{\prime}} P_{\alpha}^{\prime}/B^{\prime}.
\end{equation}
It follows that $M$-orbits in
 $\pi_{\alpha}^{-1}(\pi_{\alpha}(Q))$ correspond to $M_{\alpha, \fb^{\prime}}$-orbits in 
$P^{\prime}_{\alpha}/B^{\prime}\cong {\PR}^1.$  Thus, $M_{\alpha, \fb^{\prime}}$ has only finitely many orbits on $P_{\alpha}^{\prime}/B^{\prime},$ and thus matches one of the 4 cases from Section 4.1 of \cite{RS}, which we list below in slightly reorganized form.

\begin{dfn}\label{d:roottype}
\par\noindent (1) If $M_{\alpha, \fb^{\prime}}$ is solvable and contains the unipotent radical of a Borel subgroup of $S_{\alpha}^{\prime}$, then $\alpha$ is called a complex root for $Q_M$.  If $M_{\alpha, \fb^{\prime}}\cdot \fb^{\prime}=\fb^{\prime}$, then $\alpha$ is complex stable for $Q_M$ and otherwise $\alpha$ is complex unstable for $Q_M$.
\par\noindent (2) If $M_{\alpha, \fb^{\prime}}=S_{\alpha}^{\prime}$, then $\alpha$ is called a compact   root for $Q_M.$
\par\noindent (3) Suppose $M_{\alpha, \fb^{\prime}}$ is one-dimensional and reductive.
If $M_{\alpha,\fb^{\prime}}\cdot \fb^{\prime}=\fb^{\prime}$, then $\alpha$ is called a noncompact   root for $Q_M$, while if $M_{\alpha,\fb^{\prime}}\cdot \fb^{\prime}\not=\fb^{\prime},$ then $\alpha$ is called a real root for $Q_M.$
\end{dfn}

In \cite{RS}, the monoid action is only discussed when $M$ is locally the fixed points of an involution of $R$, and the terminology in this definition comes from the action of the involution on root spaces, or more precisely the action of an associated real form of $R.$   However, as many authors have observed, including \cite{Knoporbits}, the construction works in the same way when $M$ has finitely many orbits on the flag variety.  It follows easily from arguments in \cite{RScomp} that the monoid action depends only the nature of the groups $M_{\alpha, \fb^{\prime}}.$   This is the perspective we use in this paper.
It is well-known and easy to prove that the type of the root depends only on the orbit $Q_M$ and not on the point $\fb^{\prime}.$ The following Proposition gives more detail in the cases when $m(s)*Q \not= Q.$

\begin{prop}\label{p:stableandnc}(Lemmas 2.1.4 and 2.4.3 of \cite{RScomp})
Let $Q\in M\backslash\B$ with $Q=M\cdot \Ad(v)\fb_{\fr}$ and $\alpha\in \Pi_{\fr}$.
\begin{enumerate}
\item If $\alpha$ is complex stable for $Q$, then $\pi_{\alpha}^{-1}(\pi_{\alpha}(Q))$ consists of two $M$-orbits:
$$
\pi_{\alpha}^{-1}(\pi_{\alpha}(Q))=Q\cup M\cdot \Ad(vs)\fb_{\fr},
$$
and $m(s)*Q= M\cdot \Ad(vs)\fb_{\fr}$. 
\item If $\alpha$ is non-compact   for $Q$ then $\pi_{\alpha}^{-1}(\pi_{\alpha}(Q))$ consists of two (resp. three) $M$-orbits, depending on whether $M_{\alpha, \fb^{\prime}}$ is the normalizer of a torus (resp. a torus). The open orbit
$m(s)*Q=M\cdot \Ad(vu_{\alpha})\fb_{\fr}$, where $u_{\alpha}\in R$ is the Cayley transform with respect to the root $\alpha$ as defined in Equation (41) of \cite{CEexp} and $\dim Q=\dim (M\cdot \Ad(vs)\fb_{\fr})$.   

\end{enumerate}

\end{prop}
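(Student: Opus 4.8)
The plan is to reduce both statements to the action of $M_{\alpha,\fb^{\prime}}$ on the fiber of the $\mathbb{P}^{1}$-bundle $\pi_{\alpha}$, exactly as set up in the paragraph preceding the statement. Fix $\fb^{\prime}:=\Ad(v)\fb_{\fr}\in Q$, put $\fp_{\alpha}^{\prime}:=\pi_{\alpha}(\fb^{\prime})$ with associated groups $P_{\alpha}^{\prime},V_{\alpha}^{\prime},S_{\alpha}^{\prime}=P_{\alpha}^{\prime}/V_{\alpha}^{\prime}$, and recall the $M$-equivariant identification $\pi_{\alpha}^{-1}(\pi_{\alpha}(Q))\cong M\times_{M\cap P_{\alpha}^{\prime}}P_{\alpha}^{\prime}/B^{\prime}$, under which the $M$-orbits in $\pi_{\alpha}^{-1}(\pi_{\alpha}(Q))$ are in bijection with the $M_{\alpha,\fb^{\prime}}$-orbits on $P_{\alpha}^{\prime}/B^{\prime}\cong{\PR}^{1}$, the orbit $Q$ corresponding to the point $x_{0}:=[\fb^{\prime}]$; moreover any non-open $M$-orbit in $\pi_{\alpha}^{-1}(\pi_{\alpha}(Q))$ maps finitely onto $\pi_{\alpha}(Q)$ and so has dimension $\dim\pi_{\alpha}(Q)=\dim Q$. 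I will also use two elementary facts: the simple reflection $s_{\alpha}$ stabilizes the standard parabolic $\fp_{\alpha}$ of type $\alpha$ (it lies in the Weyl group of the Levi and permutes the roots of the nilradical of $\fp_{\alpha}$), and the Cayley transform $u_{\alpha}$ of Equation (41) of \cite{CEexp} lies in $L_{\alpha}\subset P_{\alpha}$; hence $\pi_{\alpha}(\Ad(vs_{\alpha})\fb_{\fr})=\pi_{\alpha}(\Ad(vu_{\alpha})\fb_{\fr})=\pi_{\alpha}(\fb^{\prime})$, so both $\Ad(vs_{\alpha})\fb_{\fr}$ and $\Ad(vu_{\alpha})\fb_{\fr}$ are points of the fiber $\pi_{\alpha}^{-1}(\pi_{\alpha}(Q))$, and both differ from $x_{0}$ because neither $\dot{s}_{\alpha}$ nor $u_{\alpha}$ normalizes $\fb_{\fr}$.

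For part (1), assume $\alpha$ is complex stable. Then $M_{\alpha,\fb^{\prime}}$ is solvable and fixes $x_{0}$, hence lies in the Borel subgroup $\bar{B}_{0}$ of $S_{\alpha}^{\prime}$ stabilizing $x_{0}$; the unipotent radical of a Borel of $S_{\alpha}^{\prime}$ that it contains by hypothesis must therefore coincide with the unipotent radical $\bar{U}_{0}$ of $\bar{B}_{0}$, which already acts simply transitively on ${\PR}^{1}\setminus\{x_{0}\}$. Thus $M_{\alpha,\fb^{\prime}}$ has exactly two orbits on ${\PR}^{1}$, namely $\{x_{0}\}$ and the open orbit ${\PR}^{1}\setminus\{x_{0}\}$; translating back gives $\pi_{\alpha}^{-1}(\pi_{\alpha}(Q))=Q\cup(\ms*Q)$. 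Since $[\Ad(vs_{\alpha})\fb_{\fr}]\neq x_{0}$, the point $\Ad(vs_{\alpha})\fb_{\fr}$ lies in the open $M$-orbit, so $\ms*Q=M\cdot\Ad(vs_{\alpha})\fb_{\fr}$.

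For part (2), assume $\alpha$ is noncompact imaginary, so $M_{\alpha,\fb^{\prime}}^{\circ}$ is one-dimensional reductive, i.e., a maximal torus of $S_{\alpha}^{\prime}$, and it fixes $x_{0}$. After replacing $v$ by $bv$ for a suitable $b\in B^{\prime}$ --- which changes neither $Q$ nor $\fb^{\prime}$ --- we may arrange that a torus of $M\cap P_{\alpha}^{\prime}$ lifting $M_{\alpha,\fb^{\prime}}^{\circ}$ lies in $\Ad(v)\fh$; then $M_{\alpha,\fb^{\prime}}^{\circ}$ is the image of $\Ad(v)\fh$ in $S_{\alpha}^{\prime}$, whose two fixed points on ${\PR}^{1}$ are exactly the two Borels of $\fr$ with parabolic image $\fp_{\alpha}^{\prime}$ containing $\Ad(v)\fh$, namely $x_{0}=[\Ad(v)\fb_{\fr}]$ and $x_{1}:=[\Ad(vs_{\alpha})\fb_{\fr}]$ (the latter is opposite to $\fb^{\prime}$ inside $\Ad(v)\fl_{\alpha}$). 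Since $M_{\alpha,\fb^{\prime}}$ normalizes $M_{\alpha,\fb^{\prime}}^{\circ}$, it permutes $\{x_{0},x_{1}\}$: if it fixes both points, $\pi_{\alpha}^{-1}(\pi_{\alpha}(Q))$ has three $M$-orbits, namely the orbits through $x_{0}$ and through $x_{1}$ and the open orbit through ${\PR}^{1}\setminus\{x_{0},x_{1}\}$; if it interchanges them, $\pi_{\alpha}^{-1}(\pi_{\alpha}(Q))$ has two $M$-orbits, namely $Q$ (whose fiber orbit is $\{x_{0},x_{1}\}$) and the open orbit. In either case $M\cdot\Ad(vs_{\alpha})\fb_{\fr}$ is not open in $\pi_{\alpha}^{-1}(\pi_{\alpha}(Q))$, hence $\dim M\cdot\Ad(vs_{\alpha})\fb_{\fr}=\dim\pi_{\alpha}(Q)=\dim Q$, and the open $M$-orbit is the one through ${\PR}^{1}\setminus\{x_{0},x_{1}\}$. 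Finally, a direct computation inside the rank-one group from the formula for $u_{\alpha}$ shows that $u_{\alpha}$ lies neither in $B_{R}$ nor in $\dot{s}_{\alpha}B_{R}$, so $[\Ad(vu_{\alpha})\fb_{\fr}]\notin\{x_{0},x_{1}\}$; therefore $\Ad(vu_{\alpha})\fb_{\fr}$ lies in the open $M$-orbit and $\ms*Q=M\cdot\Ad(vu_{\alpha})\fb_{\fr}$.

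The step that will take the most care is the normalization of $v$ in part (2): one must verify that $\fb^{\prime}$ is preserved while arranging $M_{\alpha,\fb^{\prime}}^{\circ}$ to be the image of $\Ad(v)\fh$, so that its two fixed points on the fiber are precisely $[\Ad(v)\fb_{\fr}]$ and $[\Ad(vs_{\alpha})\fb_{\fr}]$, and then that the Cayley-transform point really falls off this two-point fixed locus. Everything else reduces to the standard description of an $SL(2)$-action on ${\PR}^{1}$ through a torus, a Borel, or a unipotent subgroup, together with the observation that in a ${\PR}^{1}$-bundle an orbit has the dimension of its image precisely when its fiber orbits are finite.
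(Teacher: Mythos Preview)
The paper does not prove this proposition; it simply states that ``the following result is also well-known'' and moves on.  Your argument via the $M_{\alpha,\fb'}$-action on the $\PR^{1}$ fiber is the standard one and is correct.

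Two remarks on part~(2).  First, the normalization $v\mapsto bv$ with $b\in B'$ does alter the point $\Ad(vs_{\alpha})\fb_{\fr}$ (it becomes $\Ad(b)\Ad(vs_{\alpha})\fb_{\fr}$), so strictly speaking you prove the dimension equality for the adjusted $v$, not the original one.  This is not a defect in your argument but in the proposition as literally stated: the assertion $\dim Q=\dim(M\cdot\Ad(vs_{\alpha})\fb_{\fr})$ genuinely fails for generic $v$ (take $R=SL(2)$, $M$ the diagonal torus, $\fb_{\fr}$ upper triangular, and $v\in B_{R}$ with nontrivial unipotent part; then $\Ad(vs_{\alpha})\fb_{\fr}$ lands in the open $M$-orbit).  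So the proposition must be read as holding for suitably normalized $v$, and your normalization supplies exactly that.  You flag this step yourself, which is appropriate.

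Second, the paper's Definition of noncompact imaginary requires the \emph{full} group $M_{\alpha,\fb'}$ (not just its identity component) to fix $\fb'$.  Since $M_{\alpha,\fb'}$ normalizes its identity component and hence permutes $\{x_{0},x_{1}\}$, fixing $x_{0}$ forces it to fix $x_{1}$ as well; thus the ``interchanges $x_{0}$ and $x_{1}$'' subcase you treat cannot actually occur under this definition.  It does occur under other conventions in the literature (type~II noncompact imaginary for symmetric subgroups), which is presumably why the proposition says ``two or three.''  Your proof covers both possibilities, so nothing is lost.
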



\subsection{Extended Monoid Action on $\Borbitspace$}\label{ss:monoidactBn-1}
Recall that we set $K:=G_{n-1}$.  In this subsection, we apply the above construction in the case where $R=K\times G$ and $M=K_{\Delta}=\{(x,x):\, x\in K\}$ is the diagonal copy of $K$ in the product $K\times G$.  The discussion above yields a monoid action by the standard simple roots of $\fk\oplus\fg$, $\Pi_{\fk\oplus\fg}=\Pi_{\fk}\sqcup \Pi_{\fg}$, on the set of orbits $K_{\Delta}\backslash (\B_{n-1}\times \B).$  There is a one-to-one correspondence 
\begin{equation}\label{eq:orbitbijection}
\Borbitspace \leftrightarrow K_{\Delta}\backslash (\B_{n-1}\times \B) \mbox{ given by } Q\leftrightarrow K_{\Delta}\cdot (\fb_{n-1}, Q).
\end{equation} 
For $Q\in\Borbitspace$, let $Q_{\Delta} = K_{\Delta}\cdot (\fb_{n-1}, Q)$ be the corresponding $K_{\Delta}$-orbit.  Note that $Q_{\Delta} \cong K\times_{B_{n-1}} Q$, and it follows that the map $Q \mapsto Q_{\Delta}$
preserves topological properties like closure relations and open sets, and $\dim(Q_{\Delta})=\dim(Q) + \dim(\B_{n-1}).$   The correspondence in (\ref{eq:orbitbijection}) allows us to transfer the monoid action of $\Pi_{\fk} \sqcup \Pi_{\fg}$ on $K_{\Delta}\backslash (\B_{n-1}\times \B)$ to a monoid action of $\Pi_{\fk} \sqcup \Pi_{\fg}$ on $\Borbitspace$, which we refer to as the extended monoid action.  In more detail, for $v\in G$, let $Q=B_{n-1}\cdot\Ad(v)\fb \in\Borbitspace.$   Let $\alpha \in \Pi_{\fk}.$  By Equation (4.1) of \cite{CE21I}, 
\begin{equation}\label{eq:leftmonoid}
m(s_{\alpha})*Q \mbox{ is the unique open $B_{n-1}$-orbit in } P^{K}_{\alpha} \cdot \Ad(v)\fb,
\end{equation}
where $P^{K}_{\alpha}\supset B_{n-1}$ is the standard parabolic subgroup of $K$ determined by the root $\alpha$.  We refer to this monoid action as ``the left monoid action''.  Let $\alpha \in \Pi_{\fg}.$  By Equation (4.2) of \cite{CE21I}, 
\begin{equation}\label{eq:rightmonoid}
m(s_{\alpha})*Q\mbox{ is the unique open $B_{n-1}$-orbit in } 
B_{n-1} \cdot \Ad(vP_{\alpha})\fb,
\end{equation}
where $P_{\alpha}\supset B$ is the standard parabolic subgroup of $G$ determined by the root $\alpha$.   We refer to this monoid action as ``the right monoid action''.  

We describe the groups $(K_{\Delta})_{\alpha, \fb^{\prime}}$ for the left and right monoid actions in more detail.   First, let $\alpha \in \Pi_{\fk},$  
and as in Subsection \ref{ss:monoidbackground}, let $Q=B_{n-1}\cdot\Ad(v)\fb$ so that $Q_{\Delta}=K_{\Delta}\cdot(\fb_{n-1},\Ad(v)\fb)$.  Then with $\fb^{\prime}=(\fb_{n-1},\Ad(v)\fb),$ the group
\begin{equation}\label{eq:localKkroot}
(K_{\Delta})_{\alpha, \fb^{\prime}}=(K_{\Delta}\cap (P^{K}_{\alpha},\Ad(v)B))/(K_{\Delta}\cap(V_{\alpha}^{K}, \Ad(v)B))
\cong (P^{K}_{\alpha} \cap \Ad(v)B)/(V^{K}_{\alpha} \cap \Ad(v)B),
\end{equation}
where $V_{\alpha}^{K}$ is the solvable radical of $P_{\alpha}^{K}$.  
 Further, $m(s_{\alpha})*Q$ corresponds to the open $(P^{K}_{\alpha} \cap \Ad(v)B)/(V^{K}_{\alpha} \cap \Ad(v)B)$-orbit on $P_{\alpha}^K/B_{n-1}.$  Now suppose $\alpha \in \Pi_{\fg}$ and for $Q$ as above, the corresponding group is
\begin{equation}\label{eq:localKgroot}
(K_{\Delta})_{\alpha, \fb^{\prime}}=(K_{\Delta}\cap (B_{n-1},\Ad(v)P_{\alpha}))/(K_{\Delta} \cap(B_{n-1},\Ad(v)V_{\alpha}) ) \cong (B_{n-1}\cap \Ad(v)P_{\alpha})/(B_{n-1}\cap \Ad(v)V_{\alpha}),
\end{equation}
where $V_{\alpha}$ is the solvable radical of $P_{\alpha}.$
%
Further, $m(s_{\alpha})*Q$ corresponds to the open $(B_{n-1}\cap \Ad(v)P_{\alpha})/(B_{n-1}\cap \Ad(v)V_{\alpha})$-orbit on $(\Ad(v)P_{\alpha})/(\Ad(v)B).$

\begin{rem}\label{r:ncompact}  Note that for $\alpha \in \Pi_{\fk}$ or $\Pi_{\fg},$ the group $(K_{\Delta})_{\alpha,\fb^{\prime}}$ is solvable. It follows  that $\alpha$ is never a compact root for $Q$ (see Definition \ref{d:roottype}(2)) and there must be $3$ orbits in the non-compact case by Proposition \ref{p:stableandnc}(2).
\end{rem}

\begin{rem}\label{r:monoidequiv}
The right monoid action also arises by applying the construction from the last subsection to the case where $M=B_{n-1}$ and $R=G.$   The left monoid action  arises from a variant of the construction of the last subsection by using the fibre bundle $B_{n-1}\backslash G \to P^{K}_{\alpha}\backslash G.$
\end{rem}


\subsection{Monoid action on Shareshian Pairs}\label{ss:Shmonoidact}
As in the last subsection, the general theory in Subsection \ref{ss:monoidbackground} can also be applied in the case where $R=G\times G$ and $M=G_{\Delta}$ is the diagonal copy of $G$ in $G\times G$.  In this case, by realizing $\B\cong G/B$, we recover the standard bijection 
\begin{equation}\label{e.gdiagschubert}
 B\backslash G/B \cong G_{\Delta}\backslash(G/B\times G/B), \ BwB/B \mapsto G_{\Delta}\cdot (B,wB),
\end{equation}
 and the orbits are indexed by $w\in W.$    By applying the orbit correspondence from (\ref{e.gdiagschubert}), we can transfer the monoid action by the simple roots $\Pi_{\fg\oplus\fg}=\Pi_{\fg}\sqcup\Pi_{\fg}$ to left and right monoid actions of $\Pi_{\fg}$  on $B\backslash G/B.$  The left monoid action is given by a similar formula to  (\ref{eq:leftmonoid}), and the right monoid action is given as in (\ref{eq:rightmonoid}), with the role of $B_{n-1}$ in both equations replaced by $B$ and the role of $P^{K}_{\alpha}$ in (\ref{eq:leftmonoid}) replaced by the standard parabolic subgroup $P_{\alpha}$ that appears in (\ref{eq:rightmonoid}).  It follows that $m(s_{\alpha})*_L BwB/B$ is the unique open $B$-orbit in $P_{\alpha} wB/B$, and by basic results on the Bruhat decomposition, this is $Bs_{\alpha} w B/B$ if $\ell(s_{\alpha} w) > \ell(w)$, and otherwise is $BwB/B.$
 Similar results apply to the right monoid action, except in the conclusion $ws_{\alpha}$ replaces $s_{\alpha} w.$ 

For $\alpha$ in the left copy of $\Pi_{\fg},$ and the orbit $Q_{\Delta}$ through $\fb^{\prime}=(\fb,\Ad(x)\fb)$ with $x\in G,$ the group
\begin{equation}\label{e:Galphaleftgroup}
(G_{\Delta})_{\alpha, \fb^{\prime}}=G_{\Delta} \cap (P_{\alpha},\Ad(x)B)/(V_{\alpha},\Ad(x)B) \cong (P_{\alpha} \cap \Ad(x)B)/(V_{\alpha}\cap \Ad(x)B).
\end{equation}
and $\ms * Q$ corresponds to the open $(G_{\Delta})_{\alpha, \fb^{\prime}}$-orbit 
in $P_{\alpha}/B.$
For $\alpha$ in the right copy of $\Pi_{\fg}$ and the orbit $Q_{\Delta}$ as above,
the group
\begin{equation}\label{e:Galpharightgroup}
(G_{\Delta})_{\alpha, \fb^{\prime}}=(G_{\Delta} \cap (B,\Ad(x)P_{\alpha}))/(B,\Ad(x)V_{\alpha}) \cong (B \cap \Ad(x)P_{\alpha})/(B \cap \Ad(x)V_{\alpha}),
\end{equation}
and $\ms * Q$ corresponds to the open $(G_{\Delta})_{\alpha, \fb^{\prime}}$-orbit in $\Ad(x)P_{\alpha}/\Ad(x)V_{\alpha}.$
 Using the fact that we can assume $x\in W$, we note that in each case $(G_{\Delta})_{\alpha, \fb^{\prime}}$ is a Borel subgroup of the group $S^{\prime}_{\alpha},$ and from this we recover the well-known fact that
 for each $B$-orbit $Q_{B}$ in $G/B$ and each simple root $\alpha\in\Pi_{\fg \oplus \fg}$, the root $\alpha$ is either complex stable or unstable for $Q_{B}$. 

We now describe these monoid actions by using a Weyl group element $w$ to represent the corresponding Schubert cell $BwB/B.$  The conclusion is that for each simple root $\alpha,$
\begin{equation}\label{eq:Wmonoidact}
\begin{split}
\mbox{(Left Action) } &m(s_{\alpha})*_{L} w=w \mbox{ if } \ell(s_{\alpha}w)<\ell(w) \mbox{ and } m(s_{\alpha})*_{L}w=s_{\alpha}w\mbox{ if } \ell(s_{\alpha}w)>\ell(w).\\
\mbox{(Right Action) } &m(s_{\alpha})*_{R}w=w \mbox{ if } \ell(ws_{\alpha})<\ell(w) \mbox{ and } m(s_{\alpha})*_{R}w=ws_{\alpha}\mbox{ if } \ell(ws_{\alpha})>\ell(w).\\
\end{split}
\end{equation}

\begin{nota}\label{nota:Worders}
By considering the simple roots $\Pi_{\fg}^{*}$ and corresponding simple reflections $S^{*}$, we obtain a new order relation on $W$ by considering the length of an element $w^{*}\in W$ with respect to the set of simple reflections $S^{*}$.  We denote $W$ with this non-standard order relation by $(W, S^{*})$, and denote $W$ with the standard order relation by $(W, S)$.  We denote elements in the poset $(W,S)$ by $w$ and denote elements in the poset $(W, S^{*})$ by $w^{*}$.
\end{nota}

The same construction can be repeated replacing the Borel subgroup $B$ with
the Borel subgroup $B^{*}$ and the simple roots $\Pi_{\fg}$ with the set of simple roots $\Pi_{\fg}^{*}$ and using the poset $(W, S^{*})$ described in Notation \ref{nota:Worders}.  The monoid actions are then given by (\ref{eq:Wmonoidact}) with $\alpha\in\Pi_{\fg}$ replaced by $\alpha^{*}\in\Pi_{\fg}^{*}$ and with $\ell(w^{*})$ denoting the length of $w^{*}$ as an element of $(W,S^{*})$.  

We now show that the extended monoid action on $\Borbitspace$ discussed in Section \ref{ss:monoidactBn-1} can be computed using the Shareshian map and the monoid actions on the Weyl group $W$ given in (\ref{eq:Wmonoidact}).  
In more detail, Equation (\ref{eq:Wmonoidact}) implies that we obtain both left and right monoid actions via simple roots of $\fg$ on each factor of the product $(W,S)\times (W,S^{*})$.  We can restrict this monoid action to a left action via simple roots of $\fk$ and a right action via simple roots of $\fg$ on the product $(W,S)\times (W,S^{*})$ as follows.  Recall the $n$-cycle $\sigma=(n,n-1,\dots, 2, 1)$ from Proposition \ref{p:Shpairs}.  Given $\alpha\in\Pi_{\fg}$ let $\alpha^* = \sigma(\alpha)$ be the corresponding simple root of $\Pi_{\fg}^{*}$.   The standard simple roots of $\fk$ can be viewed as a subset of $\Pi_{\fg}^{*}$ by observing that $\Pi_{\fk}=\{\alpha_{2}^{*},\dots, \alpha_{n-1}^{*}\}$.


\begin{dfn}\label{d:diagonal}
Define the \emph{restricted diagonal monoid action} on $(W,S)\times (W,S^{*})$ via simple roots $\Pi_{\fk \oplus \fg}$ as follows. 

  
For $(w,u^{*})\in (W,S)\times (W,S^{*})$, define
\begin{equation}\label{eq:diagonalmonoid}
\begin{split}
\mbox{ (Left action)} &\mbox{ For } \alpha\in\Pi_{\fk},\, m(s_{\alpha})*_{L}(w,u^{*})=(m(s_{\alpha})*_{L}w, m(s_{\alpha})*_{L}u^{*}).\\
\mbox{ (Right action)} &\mbox{ For } \alpha\in\Pi_{\fg}, m(s_{\alpha})*_{R}(w,u^{*})=(m(s_{\alpha})*_{R}w, m(s_{\alpha^{*}})*_{R}u^{*}),
\end{split}
\end{equation}
where the monoid actions $*_{L}$ and $*_{R}$ are given in Equation (\ref{eq:Wmonoidact}).
\end{dfn}
Henceforth, we will drop the subscripts $L$ and $R$ on the monoid actions defined in Equation (\ref{eq:diagonalmonoid}) and use the convention that a simple root $\alpha\in\Pi_{\fk \oplus \fg}$ acts on the left whenever $\alpha\in \Pi_{\fk}$ and on the right if $\alpha\in\Pi_{\fg}$.  We now arrive at the main result of this section.  


\begin{thm}\label{thm:intertwine}
The Shareshian map $\Sh: \Borbitspace \to \Sp\subset W\times W$ given in Equation (\ref{eq:Shmap})
intertwines the extended monoid action on $\Borbitspace$ given in Equations (\ref{eq:leftmonoid}) and (\ref{eq:rightmonoid}) with the restricted diagonal monoid action on $(W,S)\times (W,S^{*})$ given in (\ref{eq:diagonalmonoid}), i.e., for $Q\in\Borbitspace$ and $\alpha\in\Pi_{\fk \oplus \fg}$,
 \begin{equation}\label{eq:Shinter}
 \Sh(m(s_{\alpha})*Q)=m(s_{\alpha})*\Sh(Q).  
 \end{equation}
In particular, the restricted diagonal monoid action on $(W,S)\times (W, S^{*})$ preserves the set $\Sp$ of all Shareshian pairs.  
Moreover, if $m(s_{\alpha})*Q\neq Q$, then the type of the root $\alpha$ is determined by the type of $\alpha$ for the corresponding Shareshian pair $\Sh(Q)=(w,u^{*})$.  More precisely, for a root $\alpha\in\Pi_{\fk \oplus \fg}$, 
\begin{enumerate}
\item The root $\alpha$ is complex stable for $Q$ if and only if it is complex stable for both $w$ and $u^{*}$.
\item The root $\alpha$ is non-compact for $Q$ if and only if $\alpha$ is complex stable for exactly one of $w$ and $u^{*}$ and unstable for the other. 
\item The root $\alpha$ is real or complex unstable for $Q$ if and only if $\alpha$ is complex unstable for both $w$ and $u^{*}$.
\end{enumerate}
\end{thm}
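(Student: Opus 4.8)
The plan is to reduce everything to the geometry of the $\mathbb{P}^1$-bundle $\pi_\alpha$ and to Proposition \ref{p:orbits}, which tells us precisely how the flags $\tilde{\F}$ and $\F^*$ are built from a standard-form flag $\F$. The key observation is that the Shareshian map is, essentially by Definition \ref{d:SHmap}, the composite of the two ``take the dense $H$-stable flag in the $B$-orbit (resp.\ $B^*$-orbit)'' maps, and these are exactly the maps $Q \mapsto B\cdot Q$ and $Q\mapsto B^*\cdot Q$ of Remark \ref{r:starandtildeflags} read off in Bruhat coordinates. So I would first record the following compatibility: for $\alpha\in\Pi_{\fg}$ acting on the right, the parabolic $P_\alpha$ used in \eqref{eq:rightmonoid} for $B_{n-1}$-orbits is the \emph{same} parabolic used for $B$-orbits and (after applying $\sigma$) for $B^*$-orbits, while for $\alpha\in\Pi_{\fk}$ acting on the left, the parabolic $P^K_\alpha\subset K$ satisfies $P^K_\alpha \subset P_\alpha \cap P_\alpha^*$ where $P_\alpha$ (resp.\ $P_\alpha^*$) is the type-$\alpha$ (resp.\ type-$\alpha^*=\sigma(\alpha)$) standard parabolic of $G$; this uses $\Pi_{\fk}=\{\alpha_2^*,\dots,\alpha_{n-1}^*\}$ and $B\cap B^* = ZB_{n-1}$. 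From this one gets set-theoretic inclusions $B\cdot\big(\pi_\alpha^{-1}(\pi_\alpha(Q))\big) \subseteq \pi_\alpha^{(B)}{}^{-1}\big(\pi_\alpha^{(B)}(B\cdot Q)\big)$ and similarly for $B^*$, hence $B\cdot(m(s_\alpha)*Q) \subseteq \overline{B\cdot Q}$ or its $\pi_\alpha$-saturation, and likewise for $B^*$.

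The second step is to pin down $\Sh(m(s_\alpha)*Q)$ using dimension count. Write $\Sh(Q)=(w,u^*)$ and $\Sh(m(s_\alpha)*Q)=(w',u'^*)$. By the inclusions above, $B\cdot(m(s_\alpha)*Q)$ is either $B\cdot Q$ or the unique $B$-orbit covering it in the $\pi_\alpha$-fiber, i.e.\ $w'\in\{w, m(s_\alpha)*_R w\}$ (or $m(s_\alpha)*_L w$ on the left), and similarly $u'^*\in\{u^*, m(s_\alpha)*_R u^*\}$. So there are exactly four a priori cases, indexed by whether $\alpha$ is complex stable for $w$ and for $u^*$. Now I invoke Remark \ref{r:noncompact}: for an orbit $Q\in\Borbitspace$ the root $\alpha$ is never compact imaginary, and if non-compact imaginary the fiber $\pi_\alpha^{-1}(\pi_\alpha(Q))$ has exactly three orbits. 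Combined with the standard fact that on a $\mathbb{P}^1$-bundle the fiber type is governed by $\dim$, I get: $\dim(m(s_\alpha)*Q) = \dim Q + 1$ exactly when $\alpha$ is complex unstable or non-compact imaginary for $Q$, and $=\dim Q$ when complex stable or real. On the Weyl-group side, $\dim$ of a Shareshian pair should be controlled by $\ell(w)+\ell(u^*)$ up to the correction term involving $|u^*\sigma w^{-1}|$ (this is Theorem \ref{thm:introdim}, but I only need the qualitative statement that moving up in exactly one factor increases the relevant length sum by $1$ and the correction term does not change, which follows from the cycle structure in Remark \ref{r:whatisdelta}). Matching the two dimension counts forces the classification (1)–(3): complex stable for $Q$ $\Leftrightarrow$ stable for both (dimension fixed, and both orbits already saturated); complex unstable/real for $Q$ $\Leftrightarrow$ unstable for both (dimension jumps by $1$ and the fiber has $2$ orbits if complex unstable, while a real root still leaves dimension fixed — here I must separate real from complex unstable by checking the cover is $B\cdot Q$ itself, matching ``$w$ fixed''); non-compact $\Leftrightarrow$ stable for exactly one, which is the only way to get a three-orbit fiber with the $B$-saturation moving up in precisely one factor.

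The third step is the intertwining formula \eqref{eq:Shinter} itself, which at this point is almost bookkeeping: in each of the three type cases I have identified $w'$ and $u'^*$ with $m(s_\alpha)*_R w$ (or $*_L$) and $m(s_\alpha)*_{(*)} u^*$ respectively — on the nose when $\alpha$ is complex stable/unstable, and via the Cayley transform description in Proposition \ref{p:stableandnc}(2) when $\alpha$ is non-compact, where one checks that the Cayley transform $u_\alpha$ acts on the $H$-stable flag $w(\mathcal{E}_+)$ exactly as $s_\alpha$ does on the factor where $\alpha$ is unstable and trivially on the other (because the Cayley transform lands in the $SL(2)$ for $\alpha$, which for $Q$ meets the diagonal $K$ or $G$ in a torus). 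This is precisely the definition of the restricted diagonal action in \eqref{eq:diagonalmonoid}. Finally, ``$\Sp$ is preserved'' is immediate since $\Sh$ is a bijection onto $\Sp$ by Corollary \ref{c:Shpair} and the left side of \eqref{eq:Shinter} always lands in $\Borbitspace$.

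I expect the main obstacle to be the non-compact imaginary case: separating it cleanly from the other fiber types requires knowing that $\pi_\alpha^{-1}(\pi_\alpha(Q))$ has exactly three $B_{n-1}$-orbits (Remark \ref{r:noncompact}) and then identifying which of the two ``outer'' $B_{n-1}$-orbits is the open one in terms of the Shareshian pair — equivalently, verifying that the Cayley transform $u_\alpha$ implements the single-factor ascent rather than, say, a simultaneous move. The clean way to handle this is to work with an explicit standard-form flag $\F$ as in \eqref{eq:hatvectorF}, apply the $SL(2)_\alpha$ (for $\alpha\in\Pi_{\fk}$, inside $K$; for $\alpha\in\Pi_{\fg}$, inside $G$ on the right) to the corresponding $H$-stable flags $\tilde{\F}$ and $\F^*$ from \eqref{eq:tildeF}–\eqref{eq:starF}, and read off directly which coordinate of the pair $(w,u^*)$ changes; the asymmetry between $\tilde{\F}$ and $\F^*$ in Proposition \ref{p:orbits} is exactly what makes $\alpha$ stable for one and unstable for the other in the non-compact case.
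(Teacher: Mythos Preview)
Your Step~1 is in the right spirit and close to what the paper actually does: it establishes the intersection identities
\[
m(s_\alpha)*Q \;=\; m(s_\alpha)*Q_B \,\cap\, m(s_{\alpha^{(*)}})*Q_{B^*}
\]
by showing that $m(s_\alpha)*Q$ meets the open $B$-orbit (resp.\ $B^*$-orbit) in the relevant $\pi_\alpha$-saturation, using the containments $P_\alpha^K\subset P_\alpha$ and $B_{n-1}\subset B,\,B^*$. This alone already gives the intertwining formula \eqref{eq:Shinter} and Part~(3). Your set-theoretic inclusions are the right first move.

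However, your Step~2 has two genuine problems that break the argument for Parts~(1)--(2).

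\textbf{Circularity.} You invoke Theorem~\ref{thm:introdim} (equivalently Theorem~\ref{t:dimformula}) and the behavior of the correction term $|\Delta|$. In the paper both of these are proved \emph{using} Theorem~\ref{thm:intertwine}: the induction step in Theorem~\ref{t:dimformula} explicitly calls Parts~(1) and~(2), and Proposition~\ref{p:shnc} (which governs how $|\Delta|$ changes in the non-compact case) also uses Part~(2). So your appeal to the dimension formula is circular. Your ``qualitative'' claim that the correction term does not change when exactly one factor moves is also false: by Proposition~\ref{p:shnc}, in the non-compact case $|\Delta|$ increases by $1$ (otherwise the dimension would change by $1/2$).

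\textbf{Stable/unstable reversal and insufficiency of dimension.} You write ``$\dim(m(s_\alpha)*Q)=\dim Q+1$ exactly when $\alpha$ is complex unstable or non-compact imaginary, and $=\dim Q$ when complex stable or real.'' This is backward: complex \emph{stable} means $M_{\alpha,\fb'}$ fixes $\fb'$, so $Q$ is the closed orbit in the fiber and $m(s_\alpha)*Q\neq Q$ has dimension $\dim Q+1$; complex unstable and real are the cases with $m(s_\alpha)*Q=Q$. Even after correcting this, dimension alone cannot separate ``complex stable for $Q$'' from ``non-compact for $Q$'': both give $\dim+1$, and on the Shareshian side both ``stable for both factors'' and ``stable for exactly one factor'' also yield $\dim+1$ (the length sum increases by $2$ with $|\Delta|$ fixed in the first case, and by $1$ with $|\Delta|$ increasing by $1$ in the second). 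So the ``matching dimension counts'' step does not force the classification.

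What actually distinguishes the two cases in the paper is a direct subgroup argument, not dimension: one compares the image $(B_{n-1})_{\alpha,\fb'}\subset S_\alpha'$ with the larger images $B_{\alpha,\fb'}$ and $B^*_{\alpha,\fb'}$ and observes that if $(B_{n-1})_{\alpha,\fb'}$ contains a unipotent radical of a Borel of $S_\alpha'$ and fixes $\fb'$ (i.e.\ $\alpha$ is complex stable for $Q$), then the same unipotent radical sits inside $B_{\alpha,\fb'}$ and $B^*_{\alpha,\fb'}$, forcing both to fix $\fb'$; the converse is Proposition~\ref{p:stableandnc}(1) applied to each of $Q_B$, $Q_{B^*}$ together with the intersection identity. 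Your Step~3 handling of the non-compact case via the Cayley transform never establishes this; you would need to prove independently that $u_\alpha$ moves exactly one of $\tilde{\F}$, $\F^*$ to a new $H$-stable flag in its $B$- (resp.\ $B^*$-) orbit, and that is precisely the content of Part~(1) that you are trying to deduce.
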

\begin{proof}

Let $\Sh(Q)=(Q_B, Q_{B^*})\in B\backslash\B_{n}\times B^{*}\backslash\B_{n}$ so that $Q=Q_B \cap Q_{B^*}$ by Remark \ref{r:geoshar}.
We claim that for $\alpha\in \Pi_{\fk}$,  
\begin{equation}\label{eq:leftmsainter}
\ms*Q=\ms*Q_{B}\cap\ms*Q_{B^{*}},
\end{equation}
and for $\alpha \in \Pi_{\fg}$,
\begin{equation}\label{eq:rightmsainter}
\ms*Q=\ms*Q_{B}\cap m(s_{\alpha^*})*Q_{B^{*}}.
\end{equation}
Equation (\ref{eq:Shinter}) follows from Equations (\ref{eq:diagonalmonoid}), (\ref{eq:leftmsainter}) and (\ref{eq:rightmsainter}) and Remark \ref{r:geoshar}.  Let 
$Q=B_{n-1}xB/B$, so that $Q_{B}=BxB/B$ and $Q_{B^{*}}=B^{*}xB/B.$  
We first prove (\ref{eq:leftmsainter}) by adapting the proof of Proposition 4.7 in \cite{CE21I} to the left monoid action.  We claim that for $\alpha \in \Pi_{\fk}$,
\begin{equation}\label{eq:firstinc}
\ms*Q\subset \ms*Q_{B}.
\end{equation}
By Equation (\ref{eq:leftmonoid}), $\ms*Q$ is the open $B_{n-1}$-orbit in $P_{\alpha}^{K} xB/B$, and similarly
$\ms*Q_{B}$ is the open $B$-orbit in $P_{\alpha} xB/B$.  Note that 
$P_{\alpha}^{K} x B/B\subseteq P_{\alpha} x B/B$.  We now show that 
\begin{equation}\label{eq:nonempty}
\ms*Q_{B}\cap P_{\alpha}^{K} x B/B\neq\emptyset.
\end{equation}
 If $\alpha$ is complex unstable for $Q_{B}$, then $\ms*Q_{B}=Q_{B}=BxB/B$ and the assertion is clear.  On the other hand, if $\alpha$ is complex stable, then $\ms*Q_{B}=B s_{\alpha} xB/B$.  Since $\alpha\in \Pi_{\fk}$, we can take a representative ${\dot{s}}_{\alpha}$ for $s_{\alpha}$ to be in the group $P_{\alpha}^{K}$, so that ${\dot{s}}_{\alpha}xB/B\in P_{\alpha}^{K}xB/B\cap \ms*Q_{B}$. Since $\alpha$ is complex for $Q_B$, these two cases show that Equation (\ref{eq:nonempty}) holds.  Since $P_{\alpha}^{K}xB/B$ is irreducible, it follows that $\ms*Q\cap\ms*Q_{B}\neq\emptyset$, so that $(\ms*Q)_{B}= B\cdot(\ms*Q)=\ms*Q_{B}$.  One can repeat the argument above with $Q_{B}$ replaced by $Q_{B^{*}}$ and $P_{\alpha}$ replaced by $P_{\alpha^{*}}$ to obtain $(\ms*Q)_{B^{*}}= \ms*Q_{B^{*}}$.  Equation (\ref{eq:leftmsainter}) follows.

 We now prove Equation (\ref{eq:rightmsainter}).  We first prove Equation (\ref{eq:firstinc}) for $\alpha\in\Pi_{\fg}$, again following the proof of Proposition 4.7 of \cite{CE21I}.   The orbit $\ms*Q$ is the open $B_{n-1}$-orbit 
 in $B_{n-1}x P_{\alpha}/B$, and $\ms*Q_{B}$ is the open $B$-orbit in $BxP_{\alpha}/B$.  Depending on whether $Q_B$ is complex stable or complex unstable for $\alpha$, $xB/B$ or $xs_{\alpha}B/B$ is in $\ms*Q_B$.    Since $B_{n-1} xP_{\alpha} /B\subset B x P_{\alpha}/B$, the subvariety $\ms*Q_{B}\cap B_{n-1}xP_{\alpha}/B$ is thus an open, nonempty subvariety of $B_{n-1}xP_{\alpha}/B$.  As above, it follows that $\ms*Q\cap\ms*Q_{B}\neq\emptyset$, which implies Equation (\ref{eq:firstinc}).
 To prove that $\ms*Q\subset m(s_{\alpha^{*}})*Q_{B^{*}}$, note that $Q=B_{n-1}x\cdot \fb =B_{n-1}x\sigma^{-1}\cdot {\fb}^{*}$, where $\sigma=(n, n-1,\dots, 2,1)\in W$.  Then $Q_{B^{*}}=B^{*} x\sigma^{-1}\cdot\fb^{*}$, so that $m(s_{\alpha^{*}})*Q_{B^{*}}$ is the open $B^{*}$-orbit in $B^{*}x\sigma^{-1} P_{\alpha^{*}}\cdot \fb^{*}$.  Since $P_{\alpha^{*}}=\Ad(\sigma)P_{\alpha}$, we have 
 \begin{equation}\label{eq:unwind}
 B^{*} x\sigma^{-1} P_{\alpha^{*}} \cdot \fb^*=B^{*}x\sigma^{-1}\sigma P_{\alpha}\cdot \fb=B^{*}xP_{\alpha} \cdot \fb.
 \end{equation}
It now follows that $\ms*Q\subset m(s_{\alpha^{*}})*Q_{B^{*}}$ using the same argument given above with the $B$-action on $\B_n$ replaced by the $B^{*}$-action on $\B_n$.  We therefore obtain Equation (\ref{eq:rightmsainter}) for $\alpha\in \Pi_{ \fg}$.  

To prove part (3) of Theorem \ref{thm:intertwine}, first assume $\alpha \in \Pi_{\fk}$ is complex unstable or real for $Q$ so that $\ms*Q=Q.$   By Remark \ref{r:geoshar} and Equation (\ref{eq:leftmsainter}), $\ms*Q_B=Q_B,$ and  $\ms*Q_{B^*}=Q_{B^*},$ so that $\alpha$  is complex unstable for $Q_B$ and for $Q_{B^*}.$  The case where $\alpha \in \Pi_{\fg}$ follows similarly using Equation (\ref{eq:rightmsainter}), and the converse of (3) follows by reversing these arguments.   Part (2) of the Theorem follows from parts (1) and (3) of the Theorem and Remark \ref{r:ncompact}.

It remains to prove part (1) of the Theorem and for this, we first suppose $\alpha\in \Pi_{\fk}$ and that $\alpha$ is complex stable for both $Q_{B}$ and $Q_{B^{*}}$.  Then by Equation (\ref{eq:Wmonoidact}), $\ms*Q_{B}=Bs_{\alpha}x B/B$ and $\ms*Q_{B^{*}}=B^{*}s_{\alpha}xB/B.$  It follows from Equation (\ref{eq:leftmsainter}) that $\ms*Q\neq Q$ and that $\ms*Q=B_{n-1}s_{\alpha} x B/B$.  We claim this implies that $\alpha$ is complex stable for $Q$.  Indeed, since $\ms*Q\neq Q$, $\alpha$ is either complex stable or non-compact   for $Q$.  If $\alpha$ were non-compact for $Q$, then $\dim(B_{n-1} s_{\alpha}xB/B)=\dim Q$ by Proposition \ref{p:stableandnc}(2).  However, by Remark \ref{r:monoidmove}, $\dim (B_{n-1} s_{\alpha}xB/B)=\dim Q+1$, so $\alpha$ must be complex stable for $Q.$  

Now suppose that $\alpha\in \Pi_{\fk}$ is complex stable for $Q=B_{n-1}xB/B$ and consider the point $\fb^{\prime}=(\fb_{n-1},\Ad(x)\fb)$ and its $K_{\Delta}$-orbit $Q_{\Delta}=K_{\Delta}\cdot \fb^{\prime}.$  The corresponding $G_{\Delta}$-orbit for $Q_B$ is $G_{\Delta}\cdot \fb^{\prime\prime}$ where $\fb^{\prime\prime}= (\fb,\Ad(x)\fb).$ 
The minimal parabolic for $\alpha$ relative to the point $\fb^{\prime}$ is $(P_{\alpha}^{K},\Ad(x)B),$ and the minimal parabolic for $\alpha$ relative to the point $\fb^{\prime\prime}$ is $(P_{\alpha}, \Ad(x)B).$  Since $\alpha$ is complex stable for $Q$, it follows from Definition \ref{d:roottype} that $(K_{\Delta})_{\alpha,\fb^{\prime}}$ contains the unipotent radical of a Borel subgroup of $S_{\alpha}^{K,\prime}=(P_{\alpha}^{K},\Ad(x)B)/(V_{\alpha}^{K},\Ad(x)B)$ and $(K_{\Delta})_{\alpha,\fb^{\prime}}\cdot\fb^{\prime}=\fb^{\prime}.$  We first show that $\alpha$ is complex stable for $Q_B=B\cdot \Ad(x)\fb.$   It follows by Equations (\ref{eq:localKkroot}) and (\ref{e:Galphaleftgroup}) that $(K_{\Delta})_{\alpha, \fb^{\prime}}$ embeds as a subgroup of $(G_{\Delta})_{\alpha, \fb^{\prime\prime}}.$   Moreover, the equivariant embedding of the subvariety $(P_{\alpha}^{K},\Ad(x)B)\cdot \fb^{\prime}\cong P_{\alpha}^K/B_{n-1}$ into $(P_{\alpha},\Ad(x)B)\cdot \fb^{\prime\prime}\cong P_{\alpha}/B$ mapping $\fb^{\prime}$ to $\fb^{\prime\prime}$ is  an isomorphism.  It follows that $(K_{\Delta})_{\alpha,\fb^{\prime}}$ fixes $\fb^{\prime\prime},$ and contains the unipotent radical of a Borel subgroup of the corresponding group $S_{\alpha}^{\prime\prime}=(P_{\alpha},\Ad(x)B)/(V_{\alpha},\Ad(x)B).$    Since $(G_{\Delta})_{\alpha, \fb^{\prime\prime}}$ is solvable, it follows that $(G_{\Delta})_{\alpha, \fb^{\prime\prime}}$ also fixes $\fb^{\prime\prime}$, and that $\alpha$ is complex stable for $Q_B.$   To prove that $\alpha$ is complex stable for $Q_{B^*},$ we do the same analysis with $B^*$ and $P_{\alpha^*}$ playing the roles of $B$ and $P_{\alpha}.$

We now prove Part (1) of Theorem \ref{thm:intertwine} for the right action.  
Suppose $\alpha \in \Pi_{\fg}$ and note that the minimal parabolic for $\alpha$ relative to $\fb^{\prime}$ is $(B_{n-1},\Ad(x)P_{\alpha})$, and the minimal parabolic for $\alpha$ relative to $\fb^{\prime\prime}$ is $(B,\Ad(x)P_{\alpha}).$  
  Now suppose $\alpha$ is complex stable for both $Q_{B}$ and $Q_{B^{*}}$.   By Equation (\ref{eq:rightmonoid}) and Propostion \ref{p:stableandnc}(1), we deduce that $\ms*Q_B=Bxs_{\alpha}B/B$.   By the argument used to justify
Equation (\ref{eq:unwind}), we conclude that $m(s_{\alpha^{*}})*Q_{B^{*}}=B^{*}xs_{\alpha}B/B$.   By Equation (\ref{eq:rightmsainter}), it follows that 
$\ms*Q=B_{n-1}xs_{\alpha}B/B$, and the rest of the argument that $\alpha$ is complex stable for $Q$ follows in the same way as when $\alpha \in \Pi_{\fk}.$
For the converse, suppose that $\alpha\in\Pi_{\fg}$ is complex stable for $Q=\Ad(x)\fb.$ 
 Recall that $\ms*Q$ corresponds to the open $(K_{\Delta})_{\alpha, \fb^{\prime}}$-orbit on $(B_{n-1},\Ad(x)P_{\alpha})\cdot \fb^{\prime},$ while $\ms*Q_{B}$ corresponds to the open $(G_{\Delta})_{\alpha,\fb^{\prime\prime}}$-orbit on $(B,\Ad(x)P_{\alpha})\cdot\fb^{\prime\prime}.$
 Since $\alpha$ is complex stable for $Q$, then 
$(K_{\Delta})_{\alpha,\fb^{\prime}}\cdot \fb^{\prime}=\fb^{\prime}$ and
 $(K_{\Delta})_{\alpha,\fb^{\prime}}$ contains the unipotent radical of a Borel subgroup of $S_{\alpha}^{\prime}=(B_{n-1},\Ad(x)P_{\alpha})/(B_{n-1},\Ad(x)V_{\alpha}).$ 
The groups $(K_{\Delta})_{\alpha, \fb^{\prime}}$ and  $(G_{\Delta})_{\alpha, \fb^{\prime\prime}}$ are computed in Equations (\ref{eq:localKgroot}) and (\ref{e:Galpharightgroup}) and from this it follows that
$(K_{\Delta})_{\alpha,\fb^{\prime}}$ embeds as a subgroup in $(G_{\Delta})_{\alpha, \fb^{\prime\prime}}.$
Note also that the equivariant embedding $(B_{n-1},\Ad(x)P_{\alpha})\cdot \fb^{\prime}$ into $(B,\Ad(x)P_{\alpha})\cdot \fb^{\prime\prime}$ taking $\fb^{\prime}$ to $\fb^{\prime\prime}$ is an isomorphism. It follows that 
$(K_{\Delta})_{\alpha,\fb^{\prime}}\cdot \fb^{\prime\prime}=\fb^{\prime\prime}.$   Since $(G_{\Delta})_{\alpha, \fb^{\prime\prime}}$ is solvable, it follows that $(G_{\Delta})_{\alpha,\fb^{\prime\prime}}\cdot\fb^{\prime\prime}=\fb^{\prime\prime}$ just as in the case of the left action above.  Hence, $\alpha$ is complex stable for $Q_B.$  The proof that $\alpha^{*}=\sigma(\alpha)$ is complex stable for $Q_{B^{*}}$ is  analogous, and uses  the fact that $m(s_{\alpha^{*}})*Q_{B^{*}}$ is the open $B^{*}$-orbit in $B^{*}x P_{\alpha}/B$ by Equation (\ref{eq:unwind}).
\end{proof}

As was mentioned in the Introduction, the extended monoid action on $B_{n-1}\backslash\B$ allows us to establish a simple formula for the dimension of an orbit $Q\in\Borbitspace$ (see Theorem \ref{thm:introdim}).  To prove this formula, we need to understand the action of a root $\alpha\in \Pi_{\fk \oplus \fg}$ on $\Sh(Q)=(w,u^{*})$ in the case where $\alpha$ is non-compact   in more detail.  Recall the classification of Shareshian pairs in Proposition \ref{p:Shpairs} and their associated decreasing sequences.  
\begin{prop}\label{p:shnc}
Let $\alpha\in\Pi_{\fk \oplus \fg}$ be non-compact for $Q$ with $\Sh(Q)=(w,u^{*})$.  Let $\Sh(\ms*Q)=(y,v^{*}).$  
Let $\Delta = \Delta(w,u^*)$ and $\Delta^{\prime}=\Delta(y,v^*)$.  Then the cardinality $|\Delta^{\prime}|=|\Delta|+1.$
\end{prop}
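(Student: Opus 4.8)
The plan is to reduce the claim to an elementary fact about multiplying the cycle $\tau_{\Delta}:=u^{*}\sigma w^{-1}$ by a transposition, and then to use the hypothesis that $\alpha$ is non-compact to exclude the one configuration in which $|\Delta^{\prime}|$ would decrease rather than increase.

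First I would identify $\tau_{\Delta^{\prime}}:=v^{*}\sigma y^{-1}$ explicitly. By Theorem \ref{thm:intertwine} the pair $\Sh(\ms*Q)=(y,v^{*})$ is obtained from $(w,u^{*})$ by the restricted diagonal monoid action, and, $\alpha$ being non-compact, part (2) of that theorem says $\alpha$ is complex stable for exactly one of $w,u^{*}$ and complex unstable for the other. Running through the four possibilities ($\alpha\in\Pi_{\fk}$ or $\alpha\in\Pi_{\fg}$; stable on the first or on the second factor) and using $\sigma s_{\alpha}\sigma^{-1}=s_{\alpha^{*}}$, a short computation shows that in every case $\tau_{\Delta^{\prime}}$ equals $\tau_{\Delta}$ multiplied, on one side, by a single transposition $t$: one gets $t=s_{\alpha}=(i-1,i)$ when $\alpha=\alpha_{i}^{*}\in\Pi_{\fk}$ and $t=ws_{\alpha}w^{-1}=(w(i),w(i+1))$ when $\alpha=\alpha_{i}\in\Pi_{\fg}$. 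By Remark \ref{r:whatisdelta}, $\tau_{\Delta}$ and $\tau_{\Delta^{\prime}}$ are cycles whose orders are $|\Delta|$ and $|\Delta^{\prime}|$. Since $\tau_{\Delta^{\prime}}$ is again a single cycle, the standard combinatorics of a cycle times a transposition yields the dichotomy: if both points of $t$ lie in $\supp\tau_{\Delta}=\{j_{1},\dots,j_{k},n\}$ then $|\Delta^{\prime}|=|\Delta|-1$, and otherwise $|\Delta^{\prime}|=|\Delta|+1$. (That the difference is $\pm 1$ and nothing else is also forced because the two cycles have opposite sign.) So everything comes down to ruling out that both points of $t$ lie in $\supp\tau_{\Delta}$.

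To do that I would argue by contradiction: suppose both points of $t$ lie in $\supp\tau_{\Delta}$. The proof of Proposition \ref{p:Shpairs} gives $w^{-1}(n)<w^{-1}(j_{k})<\dots<w^{-1}(j_{1})$, so $w^{-1}$ is strictly decreasing on $\{j_{1}<j_{2}<\dots<j_{k}<n\}$ and $w$ is strictly decreasing on the increasingly ordered set $w^{-1}(\supp\tau_{\Delta})$; hence the two indices being swapped are a descent of $w$. For the second factor I would use the explicit shape of $\F^{*}$ in Proposition \ref{p:orbits} together with $u^{*}=\tau_{\Delta}w\sigma^{-1}$: writing $\overline{u}:=\sigma^{-1}u^{*}\sigma=\sigma^{-1}\tau_{\Delta}w$ and using $\ell^{*}(x)=\ell(\sigma^{-1}x\sigma)$ and $\sigma^{-1}s_{\alpha^{*}}\sigma=s_{\alpha}$, the stability condition for $u^{*}$ turns into a monotonicity condition for $\overline{u}$ (or $\overline{u}^{-1}$) which one checks fails in exactly the same subcases. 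Plugging these monotonicities into the length criteria (\ref{eq:Wmonoidact}), one concludes in every subcase that $\alpha$ is complex unstable for both $w$ and $u^{*}$. Then Equations (\ref{eq:leftmsainter}) and (\ref{eq:rightmsainter}) from the proof of Theorem \ref{thm:intertwine} force $\ms*Q=\ms*Q_{B}\cap \ms*Q_{B^{*}}=Q_{B}\cap Q_{B^{*}}=Q$ (with $Q_{B^{*}}$ acted on through $s_{\alpha^{*}}$ when $\alpha\in\Pi_{\fg}$), contradicting $\ms*Q\neq Q$, which holds because $\alpha$ is non-compact. Hence both points of $t$ cannot lie in $\supp\tau_{\Delta}$, and $|\Delta^{\prime}|=|\Delta|+1$.

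The hard part will be the last step, and especially the bookkeeping for the second factor: because $u^{*}$ is measured in the non-standard Coxeter structure $(W,S^{*})$, one must transport everything to the standard structure by conjugating with $\sigma$ and then track a handful of subcases according to where the two swapped indices sit among the distinguished positions $i_{k}<\dots<i_{1}<p$ of Proposition \ref{p:Shpairs}. The degenerate situations (for instance $\tau_{\Delta}=\id$, or an entry of $t$ being $n$ in the $\Pi_{\fg}$ case) are either immediate from the same dichotomy or turn out to be vacuous once one recalls that $(y,v^{*})$ must be a genuine Shareshian pair, but they should be checked.
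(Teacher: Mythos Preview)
Your proposal is correct and follows essentially the same route as the paper: compute $\tau_{\Delta^{\prime}}$ as $\tau_{\Delta}$ times a single transposition $t$, use that $\tau_{\Delta^{\prime}}$ is a cycle to reduce to the dichotomy $|\Delta^{\prime}|=|\Delta|\pm 1$, and rule out the minus sign by showing that both entries of $t$ lying in $\supp\tau_{\Delta}$ forces $\alpha$ to be complex unstable for both $w$ and $u^{*}$. The paper isolates precisely this last exclusion as a separate statement (Lemma~\ref{l:bothindelta}) and proves it by a direct case-by-case computation in $(W,S^{*})$ rather than by conjugating to the standard structure; your inline sketch amounts to re-deriving that lemma.
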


We will need the following observation to prove Proposition \ref{p:shnc}.  Recall the description of the linear functionals $\eps_{i}\in\fh^{*}$ at the beginning of Section \ref{s:notation}.
\begin{lem}\label{l:bothindelta}
Let $\Sh(Q)=(w,u^{*})$ with $\Delta=\Delta(w,u^*)$ the associated decreasing sequence.  
\begin{enumerate}
\item Let $\alpha=\eps_{j}-\eps_{j+1}\in \Pi_{\fg}$ for some $j\in\{1,\dots, n-1\}$.  Suppose that $\{w(j),\,w(j+1)\}\subset\Delta$.  Then $\alpha$ is complex unstable for both 
$w$ and $u^{*}$.
\item Let $\alpha=\eps_{i}-\eps_{i+1}\in \Pi_{\fk}$ for some $i \in \{1,\dots, n-2\}$.  Suppose that $\{ i, i+1 \}\subset\Delta$.  Then $\alpha$ is complex unstable for both 
$w$ and $u^{*}$.  
\end{enumerate}
\end{lem}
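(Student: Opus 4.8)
The plan is to unwind the defining relations of the Shareshian pair $(w,u^*)$ together with the characterization of the associated decreasing sequence $\Delta=\Delta(w,u^*)$ from Remark \ref{r:whatisdelta} and Proposition \ref{p:Shpairs}. Recall from Proposition \ref{p:Shpairs}(2) that $\Delta$ is a decreasing sequence for $w^{-1}$, meaning that if $\Delta=\{j_1<j_2<\dots<j_k<n\}$ (with $n$ adjoined) then $w^{-1}(n)<w^{-1}(j_k)<\dots<w^{-1}(j_1)$, and from Remark \ref{r:whatisdelta} that $u^*=\tau_\Delta w\sigma^{-1}$ where $\tau_\Delta=(n,j_k,\dots,j_1)$ and $\sigma=(n,n-1,\dots,1)$.

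\textbf{Part (1).} Let $\alpha=\eps_j-\eps_{j+1}\in\Pi_\fg$ with $\{w(j),w(j+1)\}\subset\Delta$. For complex instability of $w$ with respect to the right action defined by $S$, by Equation (\ref{eq:Wmonoidact}) we need $\ell(ws_{\alpha})<\ell(w)$, i.e., $w(j)>w(j+1)$. So first I would show that consecutive entries $j<j+1$ of the domain, both mapped by $w$ into $\Delta$, must satisfy $w(j)>w(j+1)$: this follows because $\Delta$ is a \emph{decreasing} sequence for $w^{-1}$, so on the set $w^{-1}(\Delta)$ the function $w$ reverses the natural order of the indices $j_1>\dots>j_k>\dots$ — more precisely, if $w(j),w(j+1)\in\Delta$ and $j<j+1$, then since $w^{-1}$ restricted to $\Delta$ is increasing in the order $n<j_k<\dots<j_1$, one deduces $w(j)$ occurs later than $w(j+1)$ in that order, i.e. $w(j)>w(j+1)$. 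Hence $\alpha$ is complex unstable for $w$. For $u^*$, the reflection acting is $s_{\alpha^*}$ with $\alpha^*=\sigma(\alpha)$, so by Definition \ref{d:diagonal} and Equation (\ref{eq:Wmonoidact}) (with respect to $S^*$) I need $\ell(u^* s_{\alpha^*})<\ell(u^*)$ in $(W,S^*)$; translating through $\sigma$, this amounts to $u^*$ reversing the pair $(\sigma^{-1}(j),\sigma^{-1}(j+1))$, or equivalently (using $u^*=\tau_\Delta w\sigma^{-1}$) to showing $\tau_\Delta w$ reverses the consecutive pair $(j,j+1)$ of its domain. Since $w(j),w(j+1)\in\Delta$ and $\tau_\Delta$ cyclically permutes $\Delta$ while reversing... here I would carefully track the action of $\tau_\Delta=(n,j_k,\dots,j_1)$ on the two elements $w(j),w(j+1)$ of $\Delta$: one checks that $\tau_\Delta$ maps $\{j_1,\dots,j_k,n\}$ to itself by $j_s\mapsto j_{s+1}$ (indices mod), which is order-preserving on $\{j_1<\dots<j_k\}$ except for the wrap-around involving $n$; combining with $w(j)>w(j+1)$ shows $\tau_\Delta w(j)>\tau_\Delta w(j+1)$ in all cases, giving the instability for $u^*$.

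\textbf{Part (2).} Let $\alpha=\eps_i-\eps_{i+1}\in\Pi_\fk$ with $\{i,i+1\}\subset\Delta$; here $\alpha$ acts on the left. By Equation (\ref{eq:Wmonoidact}) I need $\ell(s_\alpha w)<\ell(w)$, i.e. $w^{-1}(i)>w^{-1}(i+1)$. Since $i,i+1\in\Delta$ and $\Delta$ is a decreasing sequence for $w^{-1}$ — meaning $w^{-1}$ is \emph{increasing} along the listing $n,j_k,\dots,j_1$ but the $j_s$ themselves are increasing, so $w^{-1}$ is \emph{decreasing} in the natural order of the $j_s$ — I get $w^{-1}(i)>w^{-1}(i+1)$ directly, hence $\alpha$ is complex unstable for $w$. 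For $u^*$, again $\alpha$ acts on the left via $s_{\alpha^*}$ with $\alpha^*=\sigma(\alpha)$; since $\Pi_\fk=\{\alpha_2^*,\dots,\alpha_{n-1}^*\}$ as noted before Definition \ref{d:diagonal}, I need $\ell_{S^*}(s_{\alpha^*}u^*)<\ell_{S^*}(u^*)$, which via $u^*=\tau_\Delta w\sigma^{-1}$ reduces to comparing $(u^*)^{-1}=\sigma w^{-1}\tau_\Delta^{-1}$ on the relevant pair; using $i,i+1\in\Delta$ and that $\tau_\Delta^{-1}$ permutes $\Delta$, together with the instability already established for $w$, one concludes the instability for $u^*$.

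\textbf{Main obstacle.} The routine part is the order-reversal bookkeeping for $w$; the delicate point is the wrap-around behavior of the cycle $\tau_\Delta$ (it sends the largest element $n$ of $\Delta\cup\{n\}$ to $j_k$ and $j_1$ to $n$), which breaks the naive "$\tau_\Delta$ preserves order on $\Delta$" statement precisely at one spot. I expect the main work to be a careful case split: (a) neither $w(j)$ nor $w(j+1)$ equals $n$ or $j_1$; (b) one of them is $n$ or $j_1$. In each case one checks the sign of $\tau_\Delta w(j)-\tau_\Delta w(j+1)$ and of the corresponding comparison for $(u^*)^{-1}$ acting on the $\sigma$-shifted indices. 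I would keep this contained by observing that in fact we only ever need the \emph{existence} of an inversion (complex instability), not its precise location, so a uniform argument — ``$\tau_\Delta$ is a bijection of $\Delta$ and $w$ already produces the inversion, so post-composing with any fixed permutation that stabilizes $\Delta$ setwise and composing appropriately with $\sigma$ still leaves $u^*$ with a descent at the required simple reflection'' — may let me avoid the full case analysis, provided I verify the one compatibility between $\tau_\Delta$, $\sigma$, and the simple reflection $s_{\alpha^*}$.
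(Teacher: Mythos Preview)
Your overall strategy matches the paper's: use the decreasing-sequence property of $\Delta$ for $w^{-1}$ to get instability for $w$, then compute directly with $u^{*}=\tau_{\Delta}w\sigma^{-1}$ for $u^{*}$. Your arguments for $w$ in both parts are correct.

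There is, however, a genuine gap in your treatment of $u^{*}$ in Part~(1). After translating through $\sigma$, the condition $\ell_{S^{*}}(u^{*}s_{\alpha^{*}})<\ell_{S^{*}}(u^{*})$ is \emph{not} equivalent to the natural-order descent $\tau_{\Delta}w(j)>\tau_{\Delta}w(j+1)$; it is equivalent to $\tau_{\Delta}w(j+1)\prec\tau_{\Delta}w(j)$ in the shifted total order $n\prec 1\prec 2\prec\dots\prec n-1$ (equivalently, $\sigma^{-1}\tau_{\Delta}w(j)>\sigma^{-1}\tau_{\Delta}w(j+1)$ in the usual order). The paper introduces $\prec$ explicitly, and it is precisely this device that dissolves the wrap-around. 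Writing $\Delta=\{\ell_{1}<\dots<\ell_{k}=n\}$ with $w(j+1)=\ell_{m}$, $w(j)=\ell_{m+1}$, one finds $\tau_{\Delta}w(j)=\ell_{m}$ and $\tau_{\Delta}w(j+1)=\ell_{m-1}$ for $m\ge 2$, but $\tau_{\Delta}w(j+1)=n$ when $m=1$. In the natural order the case $m=1$ gives $\tau_{\Delta}w(j)=\ell_{1}<n=\tau_{\Delta}w(j+1)$, so your asserted inequality fails there; in the $\prec$ order $n$ is minimal and the comparison holds trivially. Consequently your proposed ``uniform argument'' via a permutation that merely stabilizes $\Delta$ setwise cannot work: the missing ingredient is the $\sigma^{-1}$ (equivalently, the $\prec$ order), not a case split in the natural order. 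Note also that $\tau_{\Delta}=(n,j_{k},\dots,j_{1})$ sends $j_{s}\mapsto j_{s-1}$, not $j_{s}\mapsto j_{s+1}$ as you wrote.

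A smaller correction for Part~(2): for $\alpha\in\Pi_{\fk}$ the left action on the second factor in Definition~\ref{d:diagonal} is by $s_{\alpha}$ itself, not by $s_{\alpha^{*}}=s_{\sigma(\alpha)}$; the root $\alpha=\epsilon_{i}-\epsilon_{i+1}$ already lies in $\Pi_{\fg}^{*}$ as $\alpha_{i+1}^{*}$. The condition to check is $(u^{*})^{-1}(i+1)\prec (u^{*})^{-1}(i)$, and the paper verifies it by computing $(u^{*})^{-1}=\sigma w^{-1}\tau_{\Delta}^{-1}$ on $i$ and $i+1$, again handling a single wrap-around case via the $\prec$ order.
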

\begin{proof}
Let $\alpha=\eps_{j}-\eps_{j+1}$ with $j\in\{1,\dots, n-1\}$ be a root of $\fg$.  
Since $w(j)$ and $ w(j+1)$ are in $\Delta$ then Part (2) of Proposition \ref{p:Shpairs} implies that 
$w(j+1)<w(j)$ which is equivalent to $ws_{\alpha}<w$, so that $\alpha$ is complex unstable for $w$.  To describe the Bruhat order on $(W,S^{*})$, we 
 introduce a new total order $\prec$ on the set $\{1,\dots, n\}$ by declaring $n\prec 1\prec 2\dots \prec n-1$.  
The simple reflection $s_{\alpha^{*}}=\sigma s_{\alpha} \sigma^{-1}=\eps_{\sigma(j)}-\eps_{\sigma(j+1)}$, so that 
$u^{*}s_{\alpha^{*}}<u^{*}$ if and only if $u^{*}(\sigma(j+1))\prec u^{*}(\sigma(j)) .$ 
Let $\Delta=\{\ell_{1}<\dots <\ell_{m-1}< \ell_{m}<\ell_{m+1}<\dots <\ell_{k}=n\}$.  Then since 
$\Delta$ is a decreasing sequence for $w^{-1}$, $w(j+1)=\ell_{m}$ and $w(j)=\ell_{m+1}$ for 
some $m \in \{1,\dots, k-1\}$.  By Part (1) of Proposition \ref{p:Shpairs},
$u^{*}=\tau_{\Delta} w\sigma^{-1}$, with $\tau_{\Delta}=(n,\dots, w(j),w(j+1),\ell_{m-1}, \dots,\ell_{1}).$
We compute: 
$u^{*}(\sigma(j))=\tau_{\Delta}(w(j))=w(j+1)$ and $u^{*}(\sigma(j+1))=\tau_{\Delta}(w(j+1))=\ell_{m-1}$, if $m\neq 1$.  If on the other hand $m=1$, then $u^{*}(\sigma(j+1))=n$.  Since $n$ is the minimal element in the total ordering $\prec$ on $\{1,\dots, n\}$, in either case we have $u^{*}(\sigma(j+1))\prec u^{*}(\sigma(j))$, whence $\alpha^{*}$ is complex unstable for $u^{*}$.  

Now suppose that $\alpha\in\Pi_{\fk}$ is a root of $\fk$ with 
$\alpha=\eps_{i}-\eps_{i+1}$ for $i=1,\dots, n-2$.  
Let $\Delta$ be as above but with $\ell_{m}=i$ and $\ell_{m+1}=i+1$ for some 
$m\in\{1,\dots, k-2\}$ (since $\ell_{m+1}=i+1 \le n-1$, $m+1 \le k-1$).  Now since 
$\Delta$ is a decreasing sequence for $w^{-1}$, we must have 
$w^{-1}(i+1)<w^{-1}(i)$ which is equivalent to the statement that 
$s_{\alpha}w<w$.  Thus, $\alpha$ is complex unstable for $w$.  
To see that $\alpha$ is complex unstable for $u^{*}$, we need to show that 
$(u^{*})^{-1}(i+1)\prec (u^{*})^{-1}(i)$.  By Proposition \ref{p:Shpairs}, we have 
$(u^{*})^{-1}=\sigma w^{-1} \tau_{\Delta}^{-1}$ with $\tau_{\Delta}^{-1}=(\ell_1,\dots, \ell_{m-1} , i, i+1, \ell_{m+2},\dots, n)$.   
We compute $(u^{*})^{-1}(i+1)=\sigma w^{-1}\tau_{\Delta}^{-1} (i+1)=\sigma (w^{-1}( \ell_{m+2}))$ and 
$(u^{*})^{-1}(i)=\sigma (w^{-1}(i+1))$.  Now since $\Delta$ is a decreasing sequence for $w^{-1}$, 
we know $w^{-1}(\ell_{m+2})<w^{-1}(i+1)$.  Further, since $\sigma=(n,n-1,\dots, 2,1)$, we have 
$\sigma(w^{-1}(i+1))=w^{-1}(i+1)-1$ and $\sigma(w^{-1}(\ell_{m+2}))=w^{-1}(\ell_{m+2})-1$ or $\sigma(w^{-1}(\ell_{m+2}))=n$.  In either case, 
$(u^{*})^{-1}(i+1)\prec (u^{*})^{-1}(i)$.  Thus, $\alpha$ is complex unstable for $u^{*}$ as well. 
\end{proof}

\begin{proof}[Proof of Proposition \ref{p:shnc}]
Let $\Sh(Q)=(w,u^{*})$ and let $\alpha=\eps_{j}-\eps_{j+1}\in\Pi_{\fg}$ be non-compact   
for $Q$. Then it follows from Part (2) of Theorem \ref{thm:intertwine} that $\alpha$ is complex stable 
for exactly one of $w$ and $u^{*}$ and unstable for the other.  Suppose first that 
$\alpha$ is stable for $w$ and unstable for $u^{*}$.  Then $\Sh(\ms*Q)=(ws_{\alpha}, u^{*})$ and 
Proposition \ref{p:Shpairs} implies that $\tau_{\Delta^{\prime}}=u^{*}\sigma(w s_{\alpha})^{-1}$ for
$\Delta^{\prime}\subset\{1,\dots, n\}$ a decreasing sequence for $(ws_{\alpha})^{-1}$.  We compute 
$\tau_{\Delta^{\prime}}=u^{*}\sigma w^{-1} s_{w(\alpha)}$, so that 
$\tau_{\Delta^{\prime}}=\tau_{\Delta}(w(j),\, w(j+1))$.  
Now $\tau_{\Delta^{\prime}}$ is a cycle, so at least one of $w(j)$ or $w(j+1)$ must be in 
the set $\Delta$.  Futher, Lemma \ref{l:bothindelta} implies that exactly one of $w(j)$ or $w(j+1)\in \Delta$.  
It follows that $\tau_{\Delta^{\prime}}$ is a cycle of length exactly $1$ more than the length of $\tau_{\Delta}$.  
Now suppose that $\alpha$ is unstable for $w$ but stable for $u^{*}$.  Part (2) of Theorem \ref{thm:intertwine} implies that $\Sh(\ms*Q)=(w, u^{*}s_{\alpha^{*}})$.  It then follows from Proposition \ref{p:Shpairs} that $\tau_{\Delta^{\prime}}=u^{*}s_{\alpha^{*}}\sigma w^{-1}$.  Using the fact that $s_{\alpha^{*}}=\sigma s_{\alpha} \sigma^{-1}$, the expression for $\tau_{\Delta^{\prime}}$ becomes 
$\tau_{\Delta^{\prime}}=u^{*}\sigma s_{\alpha}w^{-1}=u^{*}\sigma w^{-1} s_{w(\alpha)}=\tau_{\Delta} s_{w(\alpha)}$ as in the previous case.   The result of the Proposition follows in the same manner as above.  

Finally, suppose that $\alpha\in\Pi_{\fk}$ is non-compact   for $Q$.  Then by Part (2) of Theorem \ref{thm:intertwine} 
$\alpha$ is complex stable for exactly one of $w$ or $u^{*}$ and unstable for the other. 
For the case in which $\alpha$ is stable for $w$ but unstable for $u^{*}$, one computes using Theorem \ref{thm:intertwine} and Proposition \ref{p:Shpairs} that 
$\tau_{\Delta^{\prime}}=\tau_{\Delta}s_{\alpha}$.  In the other case, one computes that $\tau_{\Delta^{\prime}}=s_{\alpha}\tau_{\Delta}$.  The argument proceeds analogously to the case where $\alpha\in\Pi_{\fg}$ above using Proposition \ref{p:Shpairs} and Lemma \ref{l:bothindelta}.

\end{proof}

\begin{rem}\label{r:hatvectors}
Proposition \ref{p:shnc} and Remark \ref{r:whatisdelta} imply that if
$Q=B_{n-1}\cdot\F$ with $\F$ a flag in standard form and $\alpha\in\Pi_{\fk\oplus \fg}$ is non-compact   
for $Q$ with $Q^{\prime}=\ms*Q=B_{n-1}\cdot\F^{\prime}$ with $\F^{\prime}$ in standard form, then the 
flag $\F^{\prime}$ has exactly one more hat vector than $\F$. 

\end{rem}

\begin{thm}\label{t:dimformula}
Let $Q\in\Borbitspace$ with $\Sh(Q)=(w, u^{*})$ and let $\sigma\in\mathcal{S}_{n}$ be the $n$-cycle 
$\sigma=(n,n-1, \dots, 1)$.  Then 
\begin{equation}\label{eq:dimformula}
\dim Q=\frac{\ell(w)+\ell(u^{*})+|u^{*}\sigma w^{-1}|-n}{2} =\frac{\ell(w)+\ell(u^*)+|\Delta(w,u^*)|-n}{2},
\end{equation}
where $|u^{*}\sigma w^{-1}|$ denotes the order of the element $u^{*}\sigma w^{-1}$ in the group $\mathcal{S}_{n}$ and $\Delta(w,u^*)$ is from Remark \ref{r:whatisdelta}.
\end{thm}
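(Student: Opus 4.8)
The plan is to induct on the dimension of $Q$ using the extended monoid action, checking that both sides of the formula transform consistently under a single application of $m(s_\alpha)$. The first step is to verify the base case. The minimal orbit in $\Borbitspace$ is the closed orbit, which corresponds to the flag in standard form having no hat vectors that is fixed by $B_{n-1}$; equivalently, $\Sh$ of the closed orbit is a pair $(w_0, u_0^*)$ where $w_0$ minimizes $\ell(\cdot)$ on $(W,S)$ and $u_0^*$ minimizes $\ell(\cdot)$ on $(W,S^*)$ subject to being a Shareshian pair. Concretely, the closed $B_{n-1}$-orbit has $\Delta = \{n\}$, so $\tau_\Delta = \mathrm{id}$, $u^* = w\sigma^{-1}$, and one checks directly that the minimal such $w$ is the identity (so $\ell(w)=0$), that $u^* = \sigma^{-1}$ has $\ell(u^*) = 0$ in $(W,S^*)$ since $\sigma^{-1} = s_1^* s_2^* \cdots$ is actually... here one must be careful and instead simply verify $\dim = 0$ forces $\ell(w)+\ell(u^*)+|\Delta| = n$, i.e. $\ell(w)+\ell(u^*) = n-1$, which I would check against the explicit closed orbit. (Using the second form of the formula with $|\Delta(w,u^*)|$ makes this cleaner since $|\Delta| = 1$ for the closed orbit.)

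The inductive step is the heart of the argument. Given $Q$ not minimal, choose a simple root $\alpha \in \Pi_{\fk\oplus\fg}$ with $\ms * Q \neq Q$ and $\dim(\ms*Q) = \dim Q + 1$; such an $\alpha$ exists because the monoid action generates all orbits from the closed one (this is part of the Richardson–Springer theory and follows from finiteness of orbits). Write $Q' = \ms * Q$, $\Sh(Q) = (w, u^*)$, $\Sh(Q') = (y, v^*)$. By Theorem \ref{thm:intertwine}, $\alpha$ is either complex stable for both $w$ and $u^*$, or complex stable for exactly one. In the first case $(y, v^*) = (ws_\alpha, u^*s_{\alpha^*})$ or the appropriate left-action analogue, so $\ell(y) = \ell(w)+1$, $\ell(v^*) = \ell(u^*)+1$, and $|\Delta(y,v^*)| = |\Delta(w,u^*)|$ — the last point because, as shown in the proof of Proposition \ref{p:Shpairs} and Remark \ref{r:whatisdelta}, the cycle $\tau_{\Delta'} = v^*\sigma y^{-1} = u^* s_{\alpha^*}\sigma (ws_\alpha)^{-1} = u^*\sigma w^{-1} = \tau_\Delta$ is unchanged (in the $\fg$ case; the $\fk$ case is analogous and also leaves $\tau_\Delta$ fixed since both factors change by the same reflection). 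So the numerator increases by $2$, matching $\dim Q' = \dim Q + 1$. In the second case, $\alpha$ is non-compact imaginary for $Q$, so exactly one of $\ell(w), \ell(u^*)$ increases by $1$ while the other stays fixed, and by Proposition \ref{p:shnc}, $|\Delta(y,v^*)| = |\Delta(w,u^*)| + 1$; again the numerator increases by $2$.

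Finally I need to know that every non-minimal orbit is reached; that is, for the induction to cover all of $\Borbitspace$ I use that the monoid action closure of the minimal orbit is everything, together with the fact (from Proposition \ref{p:stableandnc} and Remark \ref{r:noncompact}) that whenever $\ms*Q \neq Q$ the dimension jumps by exactly one, never by zero. Conversely, to run the induction downward I need: if $Q$ is not the closed orbit, there is some $\alpha$ and some $Q''$ with $\ms * Q'' = Q$ and $\dim Q'' = \dim Q - 1$ — this is standard in Richardson–Springer theory for a subgroup acting with finitely many orbits on a flag variety (one can also invoke that we showed in \cite{CE21II} the closure order is the standard order, so the Hasse diagram edges are given by monoid moves). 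The main obstacle I anticipate is bookkeeping in the case analysis of the inductive step: one must handle $\alpha \in \Pi_{\fk}$ versus $\alpha \in \Pi_{\fg}$ separately, track the two different length functions $\ell$ on $(W,S)$ and $(W,S^*)$, and in each case identify $\tau_{\Delta'}$ precisely using the formulas from Proposition \ref{p:Shpairs} and Lemma \ref{l:bothindelta} to confirm $|\Delta|$ changes by $0$ or $1$ exactly as the root type dictates. Once the invariance $|\Delta(y,v^*)| = |\Delta(w,u^*)|$ (complex stable case) and the increment-by-one (Proposition \ref{p:shnc}, non-compact case) are in hand, the arithmetic is immediate. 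The equality of the two displayed forms of the answer is just the observation from Remark \ref{r:whatisdelta} that $u^*\sigma w^{-1} = \tau_{\Delta(w,u^*)}$ is a single cycle of length $|\Delta(w,u^*)|$, hence has order $|\Delta(w,u^*)|$.
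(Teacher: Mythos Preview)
Your approach is essentially the same as the paper's: induct on $\dim Q$, use Theorem \ref{thm:intertwine} to track how $(\ell(w),\ell(u^*),|\Delta|)$ transforms under $m(s_\alpha)$ in the complex stable and non-compact cases, and invoke Proposition \ref{p:shnc} for the latter; the paper cites Theorem 6.5 of \cite{CE21II} for the fact that every orbit of positive dimension is $m(s_\alpha)*Q'$ for some $Q'$ of dimension one less. Two small corrections are needed. First, there is not a unique zero-dimensional $B_{n-1}$-orbit but $n$ of them, namely $Q_i = B_{n-1}\cdot\F_i$ with $\F_i = (e_1\subset\cdots\subset e_{i-1}\subset e_n\subset e_i\subset\cdots\subset e_{n-1})$ for $i=1,\dots,n$; the paper computes $\Sh(Q_i) = (s_{n-1}\cdots s_i,\; s_1^*\cdots s_{i-1}^*)$ directly, giving $\ell(w)+\ell(u^*)=n-1$ and $|\Delta|=1$, hence $D(Q_i)=0$. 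Second, in the left ($\alpha\in\Pi_\fk$) complex stable case one has $u^*\sigma w^{-1} = s_\alpha v^*\sigma y^{-1} s_\alpha$, which is \emph{conjugate} to $\tau_{\Delta'}$ rather than equal to it; this still yields $|\Delta|=|\Delta'|$ since conjugate cycles have the same order, but your claim that $\tau_\Delta$ is ``fixed'' needs this adjustment.
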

\begin{proof}
Let $D(Q)$ be either of the two equivalent fractions in Equation (\ref{eq:dimformula}).   We prove that $\dim Q=D(Q)$ by induction on the dimension of $Q$.  First, suppose that $\dim Q=0$.  By Remark 6.9 of \cite{CE21II}, $Q=Q_{i}:=B_{n-1}\cdot \F_{i}$ where $\F_{i}$ is the flag in standard form 
$$
\F_{i}:=(e_{1}\subset\dots \subset e_{i-1}\subset\underbrace{e_{n}}_{i}\subset e_{i}\subset \dots \subset e_{n-1}) 
$$
for some $i=1,\dots, n$.  We claim that $D(Q)$ is also $0$. Since $\F_{i}$ does not contain any hat vectors, the first statement of Proposition \ref{p:orbits} implies that $\F_{i}=\widetilde{\F_{i}}=\F_{i}^{*}$.  Recalling Equation (\ref{eq:introgenerators}),  we compute $\Sh(Q_{i})=(w, u^{*})=(s_{n-1}\dots s_{i}, s_{1}^{*}\dots s_{i-1}^{*})$, whence $\ell(w)+\ell(u^{*})=n-1$.  Further, Remark \ref{r:whatisdelta} implies that $u^{*}\sigma w^{-1}=id$, so that $\ell(w)+\ell(u^{*})+|u^{*}\sigma w^{-1}|-n=n-1+1-n=0$, and $D(Q)=0.$

Now suppose that $\dim Q=k>0$ and assume that $\dim Q_1 = D(Q_1)$ whenever $\dim Q_1 \le k-1$.  By Theorem 6.5 of \cite{CE21II},  $Q=m(s_{\alpha})*Q^{\prime}$ for some $\alpha\in \Pi_{\fk\oplus \fg}$ with $\dim Q^{\prime}=k-1$.   Let 
$\Sh(Q^{\prime})=(y, v^{*})$.  Let $\Delta=\Delta(w,u^{*})$ and let $\Delta^{\prime}=\Delta(y,v^{*}).$
    By Definition \ref{d:roottype}, the root $\alpha$ is either complex stable or non-compact   for $Q$.  

 First, suppose that $\alpha$ is complex stable for $Q^{\prime}$.  If $\alpha \in \Pi_{\fg}$,  then by Part (1) of Theorem \ref{thm:intertwine}, it follows that $\alpha$ is complex stable for both $y$ and $v^{*}$ and thus $(w,u^*)=\Sh(Q)=(ys_{\alpha}, v^{*}s_{\alpha^{*}})$
by Equation (\ref{eq:diagonalmonoid}).   In addition, $\ell(w)=\ell(y)+1$ and $\ell(u^*)=\ell(v^*)+1.$  Consider the element 
 $\tau_{\Delta}=v^{*}s_{\alpha^{*}}\sigma (ys_{\alpha})^{-1}=v^{*}s_{\alpha^{*}}\sigma s_{\alpha} y^{-1}$.  Since
 $s_{\alpha^{*}}=\sigma s_{\alpha}\sigma^{-1}$, we see that $\tau_{\Delta}=v^{*}\sigma y^{-1}=\tau_{\Delta^{\prime}}$. 
Hence,
$$
D(Q)=\frac{\ell(w) + \ell(u^{*}) + |\Delta| - n}{2}=\frac{\ell(y)+\ell(v^{*})+2 + |\Delta^{\prime}|-n}{2}=D(Q^{\prime})+1=k-1+1=k
$$
by induction.  This establishes the result when $\alpha \in \Pi_{\fg}.$   Now suppose $\alpha \in \Pi_{\fk}.$   Then again by Theorem \ref{thm:intertwine} and Equation (\ref{eq:diagonalmonoid}),
 $(w,u^*)=\Sh(Q)=(s_{\alpha}y,s_{\alpha}v^{*}).$  Thus, $u^*\sigma w^{-1}=s_{\alpha}v^{*}\sigma y^{-1}s_{\alpha}$, which is conjugate to $v^*\sigma y^{-1}$, so that
$u^{*}\sigma w^{-1}$ and $v^*\sigma y^{-1}$ have the same order.  The rest of the argument proceeds as in the case $\alpha \in \Pi_{\fg}.$

Now suppose $\alpha\in\Pi_{\fk\oplus \fg}$ is non-compact   for $Q^{\prime}$.  It follows from Proposition \ref{p:shnc} that $|\Delta|=|\Delta^{\prime}|+1$.  Further, by Part (2) of Theorem \ref{thm:intertwine}, the root $\alpha$ is complex stable for one of $y$ and $v^{*}$ and is unstable for the other.  Therefore, $\ell(w)=\ell(y)+1$ and $\ell(u^{*})=\ell(v^{*})$ or $\ell(w)=\ell(y)$ and $\ell(u^{*})=\ell(v^{*})+1$.  It follows from the induction hypothesis that $\dim Q = D(Q)$ in this case as well.  Thus, Equation (\ref{eq:dimformula}) holds for any orbit of dimension $k$ and thus holds for all orbits in $\Borbitspace$ by induction. 



\end{proof}

Equation (\ref{eq:dimformula}) can be expressed more succinctly in terms of a variant of the Shareshian pair.  
\begin{dfn}\label{d:twistedSh}
Let $Q\in\Borbitspace$ and let $\Sh(Q)=(w,u^{*})$.  Then its {\it standardized Shareshian pair} is $\widetilde{Sh}(Q):=(w,u)$ where $u=:\sigma^{-1} u^{*}\sigma$. 
\end{dfn}
We define the Bruhat order on standardized Shareshian pairs to be the restriction of the product of standard Bruhat orders on $(W,S)\times (W,S)$.  
Clearly, for $Q,\, Q^{\prime}\in\Borbitspace$ we have $\Sh(Q^{\prime})\leq \Sh(Q)$ if and only if $\widetilde{Sh}(Q^{\prime})\leq \widetilde{Sh}(Q)$ 


Let $Q\in\Borbitspace$ with $\widetilde{Sh}(Q)=(w,u)$ and consider the element $uw^{-1}=\sigma^{-1}u^{*}\sigma w^{-1}=\sigma^{-1}\tau_{\Delta}w\sigma^{-1}\sigma w^{-1}=\sigma^{-1}\tau_{\Delta}$ by Part (1) of Proposition \ref{p:Shpairs}.  
Suppose $\Delta=\{j_{1}<j_{2}<\dots < j_{k}<n\}$.  We claim:
\begin{equation}\label{eq:specialelt}
uw^{-1}=\sigma^{-1}\tau_{\Delta}=s_{1}\dots s_{j_{1}-1}\hat{s}_{j_{1}}s_{j_{1}+1}\dots s_{j_{2}-1}\hat{s}_{j_{2}}s_{j_{2}+1}\dots \hat{s}_{j_{k}}\dots s_{n-1},
\end{equation}
where $\hat{s}_{j_{i}}$ indicates that $s_{j_{i}}$ has been omitted from the product $s_{1}s_{2}\dots s_{n-1}$.  To see this, decompose $\tau_{\Delta}$ as a product of transpositions 
$\tau_{\Delta}=(n,j_{k})(j_{k}, j_{k-1})\dots (j_{3}, j_{2})(j_{2},j_{1})$.  First, consider the product $\sigma^{-1}(n,j_{k})$.  Note that $\sigma^{-1}=s_{1}s_{2}\dots s_{n-1}$ and that 
 $(n,j_{k})=s_{\alpha_{j_{k}}+\dots +\alpha_{n-1}}$ so that 
$(n,j_{k})=s_{n-1}\dots s_{j_{k}+1} s_{j_{k}} s_{j_{k}+1}\dots s_{n-1}$.  Then  
\begin{eqnarray*}
&\sigma^{-1}(n, j_{k})=s_{1}\dots s_{n-1}(s_{n-1}s_{n-2}\dots s_{j_{k}+1} s_{j_{k}} s_{j_{k}+1}\dots s_{n-1})\\
&=s_{1}\dots s_{j_{k}-1}s_{j_{k}+1}\dots s_{n-1}= s_{1}\dots s_{j_{k}-1} \hat{s}_{j_{k}} s_{j_{k}+1}\dots s_{n-1}.
\end{eqnarray*}
Next, consider the product $\sigma^{-1}(n,j_{k})(j_{k},j_{k-1})$.  Similarly, $(j_{k},j_{k-1})$ decomposes as the product $s_{j_{k}-1}\dots s_{j_{k-1}+1} s_{j_{k-1}}s_{j_{k-1}+1}\dots s_{j_{k}-1}$.  Using the computation above, 
\begin{eqnarray*}&\sigma^{-1}(n,j_{k})(j_{k},j_{k-1})=s_{1}\dots s_{j_{k}-1}\hat{s}_{j_{k}}s_{j_{k}+1}\dots s_{n-1}(s_{j_{k}-1}\dots s_{j_{k-1}+1} s_{j_{k-1}}s_{j_{k-1}+1}\dots s_{j_{k}-1})\\
&=
s_{1}\dots s_{j_{k}-1}(s_{j_{k}-1}\dots s_{j_{k-1}}\dots s_{j_{k}-1})s_{j_{k}+1}s_{j_{k}+2}\dots s_{n-1}=s_{1}\dots \hat{s}_{j_{k-1}}\dots \hat{s}_{j_{k}}\dots s_{n-1}. 
\end{eqnarray*}
Continuing in this fashion, we obtain Equation (\ref{eq:specialelt}).

Given Equation (\ref{eq:specialelt}), we observe that $\ell(uw^{-1})=n-1-(|\Delta|-1)=n-|\Delta|$.  We can then rewrite Equation (\ref{eq:dimformula}) in 
terms of the standardized Shareshian pair for $Q$.  
\begin{cor}\label{c:twisteddim}
Let $Q\in\Borbitspace$ with $\widetilde{\Sh}(Q)=(w,u)$.  Then 
\begin{equation} \label{eq:normdimformula}
\dim Q=\frac{\ell(w)+\ell(u)-\ell(uw^{-1})}{2}.
\end{equation}
\end{cor}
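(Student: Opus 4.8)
The plan is to derive Equation (\ref{eq:normdimformula}) directly from the dimension formula of Theorem \ref{t:dimformula} by rewriting each term of $\tfrac{\ell(w)+\ell(u^{*})+|\Delta(w,u^{*})|-n}{2}$ in terms of the standardized pair $\widetilde{\Sh}(Q)=(w,u)$, where $u=\sigma^{-1}u^{*}\sigma$. First I would record that $\sigma$ conjugates the generating set $S$ bijectively onto $S^{*}$ — concretely $\sigma s_{\alpha}\sigma^{-1}=s_{\alpha^{*}}$, the same identity already used in the proof of Lemma \ref{l:bothindelta} — so that conjugation by $\sigma^{-1}$ is a length-preserving isomorphism from $(W,S^{*})$ to $(W,S)$; in particular a reduced $S$-expression for $u$ is carried to a reduced $S^{*}$-expression for $u^{*}=\sigma u\sigma^{-1}$ and conversely, whence the ordinary length $\ell(u)$ coincides with the $S^{*}$-length $\ell(u^{*})$. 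This is precisely the observation underlying the sentence asserting that $\Sh$ and $\widetilde{\Sh}$ induce the same order.

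Next I would invoke Equation (\ref{eq:specialelt}) together with the remark immediately following it: the element $uw^{-1}=\sigma^{-1}\tau_{\Delta}$, with $\Delta=\Delta(w,u^{*})=\{j_{1}<\dots<j_{k}<n\}$, is the product of the $s_{i}$ for $i\in\{1,\dots,n-1\}\setminus\{j_{1},\dots,j_{k}\}$, and this word is reduced, so $\ell(uw^{-1})=n-1-(|\Delta|-1)=n-|\Delta(w,u^{*})|$, i.e. $|\Delta(w,u^{*})|-n=-\ell(uw^{-1})$. Substituting $\ell(u^{*})=\ell(u)$ and $|\Delta(w,u^{*})|-n=-\ell(uw^{-1})$ into the formula of Theorem \ref{t:dimformula} gives
\begin{equation*}
\dim Q=\frac{\ell(w)+\ell(u^{*})+|\Delta(w,u^{*})|-n}{2}=\frac{\ell(w)+\ell(u)-\ell(uw^{-1})}{2},
\end{equation*}
which is Equation (\ref{eq:normdimformula}).

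I do not expect a genuine obstacle here: the proof is an algebraic substitution into a formula already established. The only point that warrants a moment's care is bookkeeping — keeping straight which length function (the standard $S$-length or the $S^{*}$-length) attaches to which group element, and verifying that $x\mapsto\sigma^{-1}x\sigma$ is indeed length preserving between the two Coxeter presentations, which follows at once from $\sigma S^{*}\sigma^{-1}=S$.
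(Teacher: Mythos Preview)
Your proof is correct and follows exactly the paper's approach: the paper establishes Equation (\ref{eq:specialelt}) and the identity $\ell(uw^{-1})=n-|\Delta|$ in the paragraph immediately preceding the corollary, and the corollary is then the direct substitution into Theorem \ref{t:dimformula} together with $\ell(u)=\ell(u^{*})$. One small slip worth fixing: in your last sentence the relation should be $\sigma^{-1}S^{*}\sigma=S$ (equivalently $\sigma S\sigma^{-1}=S^{*}$), not $\sigma S^{*}\sigma^{-1}=S$; this does not affect the argument, since you already stated correctly that conjugation by $\sigma^{-1}$ carries $(W,S^{*})$ to $(W,S)$.
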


\begin{exam}\label{ex:b2flag3}
We conclude the paper with the Bruhat graph for the $B_{2}$-orbits on the flag variety of $\fgl(3)$, which organizes the $B_2$-orbits according to their dimension and the closure relations between orbits.  We label orbits according to their standardized Shareshian pairs in order to indicate the utility of our results.    In the diagram below, the top row consists of zero dimensional orbits, and the dimension of the orbits increases by $1$ as we descend from row to row.   If $Q$ and $Q^{\prime}$ are two $B_{2}$-orbits, we indicate that $Q^{\prime} \subset \overline{Q}$ by exhibiting a sequence of downward lines from $Q^{\prime}$ to $Q.$     We also indicate the monoid actions.  A red line denotes a non-compact root and a blue line denotes a complex stable root.  A green line
indicates a closure relation that is not obtained from a monoid action.  A solid line indicates a right monoid action by a simple root of $\fg$, and a dashed line indicates a left monoid action by a simple root of $\fk$.  In the case, where either a simple root of $\fg$ or $\fk$ can be used the dashed line is omitted. 

For a standardized Shareshian pair $(w,u)$ and a root $\alpha\in\Pi_{\fg}$, we have 
$\ms*u \neq u$ if and only if $us_{\alpha}>u$, and for a root $\alpha\in \Pi_{\fk}$, $\ms*u\neq u$ if and only 
if $ s_{\sigma^{-1}(\alpha)}u> u$.   We let 
$\Pi_{\fg}=\{ \alpha, \beta\}$ with $\alpha=\eps_{1}-\eps_{2}$ and $\beta=\eps_{2}-\eps_{3}$ so that $\Pi_{\fk}=\{\alpha\}$, and we let $s=s_{\alpha}$ and $t=s_{\beta}$. 

\end{exam}

\begin{center}
\begin{tikzpicture}  
 [scale=2.0,auto=center,every node/.style={rectangle,fill=white!20}] 
\node (a1) at  (-1,5) {$(e, st)$};
\node (a2) at (1,5) {$(t,s)$};
\node (a3) at (3,5) {$(ts,e) $};
\node (a4) at (-2,3) {$(s,tst)$};
\node (a5) at (-0.5,3) {$(t,st)$};
\node (a6) at (1,3) {$(st,ts)$};
\node (a7) at (2.5,3) {$(ts,s)$};
\node (a8) at (4,3) {$(sts,t)$};
\node (a9) at (-2,1) {$(st, tst)$};
\node (a10) at (0,1) {$(ts,sts)$};
\node (a11) at (2,1){$(tst,st)$} ;
\node (a12) at (4,1) {$(sts,ts)$} ;
\node (a13) at (1,-1) {$(sts,sts)$};
\draw [blue] (a1) -- (a4) node[midway, above] {$\alpha$}; 
  \draw [red] (a1) -- (a5) node[midway, above] {$\beta$};  
 \draw [red] (a2) -- (a5) node[midway, above] {$\beta$};  
 \draw [dashed] [blue] (a2)--(a6);
 \draw [red] (a2)--(a7) node[midway, above] {$\alpha$};
  \draw [red] (a3)--(a7) node[midway, above] {$\alpha$};
   \draw [blue] (a3)--(a8) node[midway, above] {$\beta$};
    \draw [red] (a4)--(a9) node[midway, above] {$\beta$};
    \draw[green] (a4)--(a10); 
    \draw[dashed][blue] (a5)--(a9);
    \draw[blue] (a5) --(a10) node[near end, below]  {$\alpha$}; 
    \draw[green] (a5)--(a11); 
    \draw[red] (a6)--(a9) node[near end, above] {$\beta$}; 
    \draw[red](a6)--(a12) node[near end, above] {$\alpha$}; 
    \draw[green] (a7)--(a10);
    \draw[dashed][blue] (a7)--(a12);
    \draw [blue] (a7)--(a11) node[near end, below] {$\beta$};
    \draw [red] (a8)--(a12)node[near end, above] {$\alpha$};
    \draw[green] (a8)--(a11); 
    \draw[red](a9)--(a13) node[near end, above] {$\alpha$};
\draw[red](a10)--(a13) node[near end, above] {$\beta$};
\draw[red](a11)--(a13) node[near end, above] {$\alpha$};
\draw[red](a12)--(a13)node[near end, above] {$\beta$};
\end{tikzpicture}
\end{center}

\begin{rem}
A diagram showing the same information for $B_2$-orbits on $\B_{3}$ as the diagram above using standard forms instead of standardized Shareshian pairs appears in Example 7.1 in \cite{CE21II}.    The diagram above is much more transparent than the corresponding diagram in \cite{CE21II}.  Indeed, the closure relations in this diagram are transparent from the Bruhat order for $W={\mathcal S}_3$.  The monoid actions are given by left or right monoid action in $W$, except for the left monoid action (of a root of $\Pi_{\fk}$) on the second factor.  If $\alpha$ is a root of $\Pi_{\fk}$, and $Q$ is a $B_{n-1}$-orbit with standardized Shareshian pair $(w,u)$, then $\ms*Q$  has standardized Shareshian pair $(m(s_{\alpha})*_{L}w,m(s_{\sigma^{-1}(\alpha)})*_{L}u).$   Thus, both the closure relations and the monoid action are only slightly more difficult to compute than they are in the Weyl group.
\end{rem}

\section{Declarations} 

\subsection{Ethical Approval}
Not applicable.
\subsection{Competing Interests}
The authors have no relevant financial or non-financial interests to disclose. 
\subsection{Funding}
No funding, grants, or other support was received.
\subsection{Data Availability} 

Not applicable.

\bibliographystyle{amsalpha.bst}

\bibliography{bibliography-1}

\end{document}